\documentclass[12pt]{article}
\usepackage{enumerate,ulem,setspace}
\usepackage{amsmath,amssymb,amsthm, mathrsfs, mathtools}
\usepackage{latexsym}
\usepackage{graphics,graphicx}
\usepackage{hyperref}
\usepackage[bf, small]{titlesec}
\usepackage{authblk}
\usepackage{soul}
\usepackage{kotex}
\usepackage{lineno}
\usepackage{float} 
\usepackage[table]{xcolor}
\usepackage{colortbl}

\numberwithin{equation}{section}
\theoremstyle{plain}
\newtheorem{Thm}{Theorem}[section]

\newtheorem{Lem}[Thm]{Lemma}
\newtheorem{Fact}[Thm]{Fact}

\newtheorem{Cor}[Thm]{Corollary}

\newtheorem{Conj}[Thm]{Conjecture}
\newtheorem{Clm}[Thm]{Claim}

\theoremstyle{definition}

\newtheorem{Rmk}[Thm]{Remark}

\usepackage{standalone}
\usepackage{tikz}
\usetikzlibrary{arrows,positioning,matrix,fit,backgrounds,shapes,shapes.geometric, intersections}
\tikzstyle{vertex}=[circle, draw, inner sep=0pt, minimum size=6pt] 
\usetikzlibrary{shapes.callouts,decorations.pathmorphing, calc}
\usetikzlibrary{decorations.markings}
\tikzstyle{vertex}=[circle, draw, inner sep=0pt, minimum size=6pt]

\usetikzlibrary{arrows,matrix} 
\definecolor{LemonChiffon}{rgb}{100, 98, 80}
\definecolor{myblue}{rgb}{0,0.4,0.8}
\definecolor{orange}{rgb}{1, 0.4, 0}
\definecolor{mygreen}{rgb}{0, 0.8, 0.4}
\definecolor{myred}{rgb}{204, 0, 0}
\definecolor{violet}{RGB}{0.4,0.2,1}
\definecolor{brown}{rgb}{0.6, 0.4, 0}

\title{$2$-limited broadcast domination in cubic graphs}
 
\author[1]{\small Myungho Choi}
\author[1]{\small Boram Park\thanks{Boram Park was supported by the National Research Foundation of Korea(NRF) grant funded by the Korea government(MSIT) (No. RS-2025-00523206), and supported by the New Faculty Startup Fund from Seoul National University.
}}

\affil[1]{\footnotesize Department of Mathematics Education, Seoul National University, Seoul 08826} 

\date{}

\begin{document}
\maketitle
\begin{abstract}  
For a graph $G$, a function $f:V(G) \to \{0,1,2\}$ is called a $2$-limited dominating broadcast on $G$ if for every vertex $u$, there exists a vertex $v$ such that $f(v)>0$ and the distance between $u$ and $v$ in $G$ is at most $f(v)$.
The {\it cost} of $f$ means the value $\sum_{v\in V(G)}f(v)$, and the {\it $2$-limited broadcast domination number} of $G$, denoted by $\gamma_{b,2}(G)$, is the cost of a $2$-limited dominating broadcast on $G$ with minimum cost.
Henning, MacGillivray, and Yang (2020) conjectured that $\gamma_{b,2}(G)\leq \frac{|V(G)|}{3}$ for every cubic graph $G$. 
In this paper, we confirm the conjecture.
 \end{abstract}

\noindent {\it Keywords.} 
Dominating set, $2$-limited dominating broadcast, $2$-limited broadcast domination number, Cubic graphs, Subcubic graphs

\section{Introduction} 
Let $G$ be a finite simple graph with no loops.
The set of nonnegative integers is denoted by $\mathbb{Z}_{\geq0}$.
Let $f:V(G)\rightarrow \mathbb{Z}_{\geq0}$ be a function. The {\it cost} of $f$ is defined by \rm{cost}$(f) =\sum_{v\in V(G)}f(v)$.
For $i\in \mathbb{Z}_{\geq0}$, we let
$V_f^+=\{u \in V(G) \mid f(u)>0\}$ and $V_f^i=\{u \in V(G) \mid f(u)=i\}$.
We say that a vertex $u$ {\it hears from a vertex $v$ in $f$} when $v \in V_f^+$ and 
the distance between two vertices $u$ and $v$ is at most $f(v)$.
We call a function $f:V(G)\rightarrow \mathbb{Z}_{\geq0}$ a {\it  dominating broadcast} in $G$ if for each vertex $u\in V(G)$,
there exists a vertex $v\in V_f^+$ from which $u$ hears. 
The concept of dominating broadcast  on a graph was introduced by Erwin in 2001 \cite{Erwin1, Erwin2}. After its introduction, it was studied subsequently in various directions, see \cite{BC3,BC6,HMY2018,BC7,RK,SMW2022,SMW2023} for some related work and see \cite{HMY2021} for a survey.

Let $k$ be a positive integer. For a dominating broadcast $f$ on a graph $G$, if $f(v)\le k$ for every $v\in V(G)$, then $f$ is called a $k$-\textit{limited dominating broadcast}, abbreviated as a $k$-LD broadcast, on $G$. A $k$-limited dominating broadcast with minimum cost is called a \textit{minimum $k$-limited dominating broadcast}. The $k$-\textit{limited broadcast domination number} $\gamma_{b,k}(G)$ of $G$ is the cost of a minimum $k$-limited dominating broadcast on $G$.  The problem of finding a minimum $k$-limited dominating broadcast  is NP-complete \cite{CHMPP2018}. For $k=2$,
it was shown in \cite{CHMPP} that $\gamma_{b,2}(G)\le \left\lceil \frac{4|V(G)|}{9} \right\rceil$ for a connected graph $G$, and the bound is tight. In \cite{HMY2020}, Henning, MacGillivray, and Yang  posed a conjecture with a stronger bound for cubic graphs.  

\begin{Conj}[\cite{HMY2020}]\label{conj} For a cubic graph $G$, $\gamma_{b,2}(G)\le \frac{|V(G)|}{3}$.
\end{Conj}
Note that the coefficient $\frac{1}{3}$ of $|V(G)|$ in the bound is best possible, since $\gamma_{b,2}(K_{3,3})=2$. In \cite{HMY2020}, it was noted that Conjecture~\ref{conj} was motivated by Reed's $\frac{1}{3}$-conjecture (1996) on the domination number of a connected cubic graph. Reed's $\frac{1}{3}$-conjecture~\cite{Reed1996} states that a connected cubic graph with $n$ vertices has domination number at most $\left\lceil\frac{n}{3}\right\rceil$, and infinitely many counterexamples to Reed's $\frac{1}{3}$-conjecture have been found by Kostochka and Stodolsky \cite{KS-D} and Kelmans \cite{Kel}. It was observed that those known counterexamples $G$ to Reed's $\frac{1}{3}$-conjecture have $2$-limited broadcast domination number at most $\frac{|V(G)|}{3}$, and so Conjecture~\ref{conj} was posed by Henning, MacGillivray, Yang \cite{HMY2020}. They confirmed the conjecture for every cubic graph having neither $C_4$ nor $C_6$ as an induced subgraph.

\begin{Thm}[\cite{HMY2020}]\label{thm:cubic:C4C6}
For every cubic graph $G$ without induced $4$- and  $6$-cycles, $\gamma_{b,2}(G)\le \frac{|V(G)|}{3}$. 
\end{Thm}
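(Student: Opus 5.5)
We would follow the general strategy for cubic domination bounds: prove a stronger statement for subcubic graphs by a minimal-counterexample argument, using a weight function that charges vertices of low degree. Concretely, for a subcubic graph $H$ we would set $w(H)=\sum_{v} w(v)$ with $w(v)=\frac13$ for degree-$3$ vertices and somewhat larger weights for vertices of degree $2,1,0$ (for instance $\frac12,\frac23,1$). We would then aim to show that $\gamma_{b,2}(H)\le w(H)$ for every subcubic graph $H$ with no induced $C_4$ or $C_6$, perhaps excluding a short list of small exceptional graphs. Theorem~\ref{thm:cubic:C4C6} then follows immediately, since $w(G)=|V(G)|/3$ when $G$ is cubic.

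Take a counterexample $H$ minimizing $|V(H)|$. It is connected, because $\gamma_{b,2}$ and $w$ are both additive over components. The basic reduction step is as follows. Pick a vertex $v$ and assign $f(v)=2$. This covers the ball $N_2[v]$ at cost $2$. Delete $N_2[v]$ and apply minimality to $H'=H-N_2[v]$, which is again subcubic and has no induced $C_4$ or $C_6$, since induced subgraphs inherit this property. Deleting vertices can raise the weight of vertices in $N_3(v)$, since their degree drops. So the heart of the argument is the inequality
\[
\sum_{u\in N_2[v]} w(u) \;-\; \sum_{x\in N_3(v)}\bigl(w_{H'}(x)-w_H(x)\bigr)\;\ge\; 2 .
\]
Alternatively, one can assign $f(v)=1$, delete $N[v]$, and require a saving of at least $1$.

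When $v$ has degree $3$ and its second neighbourhood is large, $|N_2[v]|$ can be as large as $10$, giving weight $10/3$. The absence of induced $C_4$ and $C_6$ is what should guarantee $|N_2[v]|$ is large, with few edges leaving the ball. Short cycles through $v$ are then only triangles or $5$-cycles, and every coincidence among second neighbours creates a $4$-cycle, or else a triangle configuration that we would handle separately.

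The case analysis would proceed roughly in this order.
\begin{enumerate}[(i)]
\item First dispose of low-degree vertices. Near a vertex of degree $1$ or $2$, delete a small closed neighbourhood and assign value $1$ to a suitable vertex; the extra weight of the low-degree vertex pays for the cost.
\item Next, assume $H$ is cubic. Handle triangles: contract the triangle or remove it together with its neighbours.
\item Finally, for cubic, triangle-free $H$ with no induced $C_4$ or $C_6$, choose $v$ with $f(v)=2$. Count the edges from $N_2(v)$ to $N_3(v)$ and verify the displayed inequality. Each boundary edge costs at most the weight increase $\frac12-\frac13=\frac16$, or a bit more if one vertex loses several edges.
\end{enumerate}

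The main obstacle is step (iii) when many vertices of $N_3(v)$ lose two or three neighbours. Such vertices may even become isolated, which is why their weights must be tuned. We would first try choosing $v$ to maximize the number of $5$-cycles through it, or to minimize the number of edges leaving $N_2[v]$. If that fails, we would absorb badly affected vertices of $N_3(v)$ into the deleted set, covering them by an extra value-$1$ broadcast. We would then re-balance the weights of degree-$1$ and degree-$0$ vertices so that all local inequalities close.
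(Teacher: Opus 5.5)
\textbf{The proposal has a genuine gap, and it comes from the choice of weights rather than from the case analysis.} With $w=\frac13$ on cubic vertices and $\frac12$ on $2$-vertices, every edge cut by a deletion raises the weight of the remaining graph by at least $\frac16$. That is too expensive for your displayed inequality to hold even in the most favourable configuration.

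Take any cubic graph of girth at least $7$. Such graphs exist for every girth, and they have no triangles and no induced $C_4$ or $C_6$. For every $v$, $N_2[v]$ has $10$ vertices and sends $12$ edges to $12$ distinct vertices of $N_3(v)$, each dropping from degree $3$ to $2$. So the left-hand side of your inequality is $\frac{10}{3}-12\cdot\frac16=\frac43<2$, and the $f(v)=1$ variant gives $\frac43-1=\frac13<1$.

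More generally, suppose $S$ induces a tree on $k$ cubic vertices. Then $|\partial(S)|=k+2$, so deleting $S$ saves at most $\frac k3-\frac{k+2}{6}=\frac{k-2}{6}$. Any $2$-limited broadcast dominating $S$ costs at least $\frac k5$, since a value-$2$ vertex covers at most $10$ vertices and a value-$1$ vertex at most $4$. Hence no bounded deletion reduction can ever close, and a minimal counterexample of large girth is never excluded.

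The same failure already hits your step (i). Let $u$ be a $(3,3)$-vertex with a tree-like neighbourhood and $x\in N(u)$. Deleting $N[u]$, $N[x]$, $N_2[u]$ or $N_2[x]$ saves $\frac12$, $\frac23$, $\frac76$ or $\frac32$ respectively, each below the cost of the corresponding broadcast. Re-balancing the degree-$1$ and degree-$0$ weights cannot fix this. Your heuristic that forbidding induced $C_4$ and $C_6$ gives a large ball ``with few edges leaving'' is backwards: a large ball forces many leaving edges. Separately, contracting a triangle can shorten an induced $5$- or $7$-cycle through a triangle edge into an induced $C_4$ or $C_6$, so step (ii) leaves your hereditary class.

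\textbf{The missing idea is that the penalty per lost edge must be at most $\frac19$.} That means weights $\frac13,\frac49,\frac59,1$ for degrees $3,2,1,0$, which is exactly $\omega_G(v)/9$ in the paper. Deleting $N_2[v]$ in the tree-like case then saves exactly $\frac{30}{9}-\frac{12}{9}=2$. This is enough against a minimal counterexample, because there $\scalebox{1.4}{$\omega$}(G)<9\gamma_{b,2}(G)$ is strict.

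There is, however, no slack at all. The real work is therefore to control vertices of $N_3(v)$ that lose all their edges, whose weight jumps by $\frac69$ rather than $\frac39$. In general subcubic graphs one must also control $C_4$-components of $G-N_2[v]$, which is why the paper adds the $+2b(G)$ term. This is what the closure $\mathcal{C}(X)$, Lemma~\ref{Lem:preview}, and the reduction to the cases $(Q_{3,0,1})$, $(Q_{4,0,2})$, $(Q_{4,1,0})$ in Lemma~\ref{Lem:possible:cases} are for.

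Note that the paper does not re-prove this statement on its own. It follows from Theorem~\ref{Thm:Main_subcubic}: a cubic $G$ has $n_0=n_1=n_2=b(G)=0$, so $9\gamma_{b,2}(G)\le 3|V(G)|$.
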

 
Later, Park \cite{park20232} improved the result as follows.
 
\begin{Thm}[\cite{park20232}]\label{thm:cubic:C4}
For every cubic graph $G$ without induced $4$-cycles, $\gamma_{b,2}(G)\le \frac{|V(G)|}{3}$. 
\end{Thm}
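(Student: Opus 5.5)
The plan is to prove a stronger statement for subcubic graphs by minimal counterexample, and then read off the theorem for cubic graphs. Deleting a ball around a vertex of a cubic graph creates vertices of degree less than $3$, so the induction hypothesis must allow them. For a subcubic graph $H$ with no induced $4$-cycle, define a weight $w(H)=\sum_{v}\omega(\deg_H v)$ with $\omega(3)=1$ and $\omega(2)=\alpha$, $\omega(1)=\beta$, $\omega(0)=3$, where $1<\alpha<\beta$ are constants fixed at the end. The target statement is $\gamma_{b,2}(H)\le w(H)/3$ for every such $H$, possibly excluding a short explicit list of small exceptional components, each of which must be checked to cost at most its weight over $3$. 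Paths and cycles force a lower bound on $\alpha$: a broadcast of value $2$ covers only $5$ vertices of a long path or cycle, so $\alpha\ge 6/5$ is needed. For cubic $H$ the weight is $|V(H)|$, so the stronger statement gives Theorem~\ref{thm:cubic:C4}.

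Take a counterexample $H$ minimizing $|V(H)|$. It is connected, since the bound is additive over components. The base cases are paths, cycles and graphs on at most about $10$ vertices, which I would verify directly. The main tool is a reduction. Choose a vertex $v$ and either put $f(v)=2$ and delete $B_2(v)$, the set of vertices at distance at most $2$ from $v$, or put $f(v)=1$ and delete $N[v]$. Minimality applied to the remainder $H'$ gives $\gamma_{b,2}(H)\le \mathrm{cost}+w(H')/3$. It therefore suffices to find a reduction with
\[
w(H)-w(H')\ \ge\ 3\cdot\mathrm{cost}.
\]
The weight drop counts the deleted vertices plus the increase $\omega(\deg_{H'}u)-\omega(\deg_H u)$ at boundary vertices $u$, which is negative.

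The absence of induced $4$-cycles enters through the structure of $B_2(v)$. Around a degree-$3$ vertex whose neighborhood contains no triangle, $B_2(v)$ has close to $10$ vertices with few internal edges. Removing it with cost $2$ therefore deletes weight about $10$, against a loss of boundary weight controlled by the number of edges leaving $B_2(v)$, which is at most $12$. Here I would fix $\alpha,\beta$ so that this balances, for instance $\alpha=6/5$ and $\beta$ just large enough to survive the leaf cases. Triangles in the neighborhood shrink the ball but also reduce the number of boundary edges, and that situation is handled by the cost-$1$ move on $N[v]$ or on a triangle. Vertices of degree $1$ or $2$ are handled first. A pendant path is absorbed by a cost-$1$ or cost-$2$ broadcast at its end, using that $\beta$ and $\alpha$ exceed $1$.

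The main obstacle is the case analysis in the cubic-dense part. When many boundary vertices of $B_2(v)$ would drop to degree $1$ or $0$, or when $H'$ acquires an exceptional small component, the naive count fails. In those cases I would first try enlarging the deleted set to swallow the bad boundary vertices, and second try a different choice of $v$, namely a vertex at distance $2$ from a $6$-cycle. Forbidding induced $C_4$ is exactly what bounds the overlaps between neighborhoods so that this counting closes. Induced $6$-cycles are allowed, and they are where most of the subcases will arise.
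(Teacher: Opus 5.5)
Your framework is the one the paper uses for Theorem~\ref{Thm:Main_subcubic}, of which this statement is a special case: a weighted strengthening for subcubic graphs, a minimal counterexample, and reductions that put $1$ on $v$ and delete $N[v]$, or put $2$ on $v$ and delete $N_2[v]$, checked against $3\cdot\mathrm{cost}$. As written, though, the proposal has genuine gaps.

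\textbf{The parameters are wrong.} Your argument for $\alpha\ge 6/5$ is invalid. A value-$1$ broadcast covers three vertices of a path or cycle per unit of cost, so $\gamma_{b,2}(P_n)=\gamma_{b,2}(C_n)=\lceil n/3\rceil$. The real constraints come from small cases:
\begin{itemize}
\item $C_5$ forces $\alpha\ge 6/5$;
\item $P_4$ forces $\alpha+\beta\ge 3$;
\item $C_7$ forces $\alpha\ge 9/7$, since $\gamma_{b,2}(C_7)=3>\tfrac73\cdot\tfrac65$.
\end{itemize}
So your choice $\alpha=6/5$ is refuted by $C_7$, which has no induced $4$-cycle and can appear as a component of $H'$. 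The exceptional-list clause does not fix this as stated. A component satisfying $\gamma_{b,2}\le w/3$ need not be excluded, and one violating it cannot be ``checked'' to satisfy it. You would need an additive correction per exceptional component in the inductive statement (like the paper's $2b(G)$), paid for by every reduction that creates one. The clean choice is the paper's weights, which in your normalization are $\omega(3)=1$, $\omega(2)=4/3$, $\omega(1)=5/3$, $\omega(0)=3$. The only bad components for these weights, $C_4$ and $K_4^*$, contain induced $4$-cycles. Hence on induced-$C_4$-free subcubic graphs the bound needs no exceptions, and that class is closed under the vertex deletions you use.

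\textbf{The main step is only asserted, and the naive count fails.} The step carrying all the content is that a minimal counterexample always admits a reduction with $w(H)-w(H')\ge 3\cdot\mathrm{cost}$. You give no argument for it.
\begin{itemize}
\item With the paper's weights, suppose $N_2[v]$ induces a tree of ten cubic vertices. Deleting it at cost $2$ drops the weight by exactly $10-12/3=6$, which is tight. A single outside cubic vertex with all three neighbours in $N_2[v]$ lowers the drop to $5$, and adding that vertex with value $1$ gives $8<9$.
\item With your weights, $\beta\ge 9/5$ once $\alpha=6/5$. Suppose the twelve boundary edges go in pairs to six cubic vertices; this is possible without an induced $C_4$, since each pair closes a $6$-cycle through $v$. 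Then the drop is $10-6(\beta-1)\le 5.2<6$.
\end{itemize}
``Enlarge the deleted set or choose another $v$'' is where the whole proof lives, and nothing in your sketch shows that one of these moves always succeeds.

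\textbf{What the paper supplies instead.}
\begin{itemize}
\item Isolated vertices created by a deletion are absorbed into $\mathcal{C}(X)$ and controlled by Fact~\ref{Fact:main_inequality}.
\item Small edge cuts are excluded by the mod-$9$ inequality of Lemma~\ref{Lem:cutedge4}.
\item Minimizing also the number of edges, deleting one edge raises the weight by only $2/3$ in your normalization. This gives $0<3\gamma_{b,2}(H)-w(H)\le 2/3$ (Lemma~\ref{Lem:existence_deg_1_2}), which is impossible for cubic $H$, where $w(H)=|V(H)|$. In your class the deleted edge must lie in no triangle, so that no induced $C_4$ is created.
\end{itemize}
The last point removes the ``cubic-dense'' case you identify as the main obstacle. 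The paper then shows $\delta=2$ and that $2$-vertices have only cubic neighbours. It anchors the analysis at a cubic vertex with a $2$-neighbour and works through the $(Q_{r,a,i})$ classification of Lemma~\ref{Lem:possible:cases}.
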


Our main result is to prove Conjecture~\ref{conj}. More precisely, we prove the following. Let $n_i(G)$ be the number of vertices with degree $i$ in a graph $G$ and $b(G)$ be the number of bad connected components of $G$, where a \textit{bad connected component}  means a graph isomorphic to  an induced $4$-cycle or one subdivision $K^*_4$ of $K_4$ (see Figure~\ref{fig:A bad graph}). A \textit{subcubic graph} is a graph in which every vertex has degree at most $3$.

\begin{figure}[h!]\centering
\includegraphics[page=1]{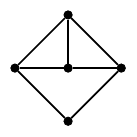}
\caption{The graph $K^*_4$} \label{fig:A bad graph}
\end{figure}

\begin{Thm}\label{Thm:Main_subcubic}
For a subcubic graph $G$, $9\gamma_{b,2}(G)\leq 9n_0(G)+5n_1(G)+4n_2(G)+3n_3(G)+2b(G)$.
\end{Thm}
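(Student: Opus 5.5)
We argue by minimal counterexample. Write $w(G)=9n_0(G)+5n_1(G)+4n_2(G)+3n_3(G)+2b(G)$. Let $G$ be a counterexample minimizing $|V(G)|+|E(G)|$.

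\textbf{Connectivity and small components.} Both sides of the inequality are additive over components, so $G$ is connected. We check small graphs directly. For $K_1$ the inequality reads $9\le 9$. For $K_2$ it reads $9\le 10$. For the induced $4$-cycle, $\gamma_{b,2}=2$, so $18\le 16+2$; this explains the term $2b$. For $K^*_4$ (five vertices, one of degree $2$, four of degree $3$), $\gamma_{b,2}=2$, so $18\le 4+12+2=18$. For $K_4$, $9\le 12$. For paths and cycles we use a broadcast of value $2$ at every fifth vertex; this costs about $2n/5$, which is at most $4n/9$ up to boundary terms that the degree-$1$ weights absorb. Short cycles and paths are checked by hand. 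From now on $G$ is connected, not bad, has maximum degree $3$, and is not one of these small graphs.

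\textbf{Reduction step.} The main tool is deletion. Choose a vertex $v$ and set $f(v)=2$, which dominates the ball $N_2[v]$. Delete $S=N_2[v]$ (or only $N[v]$ with $f(v)=1$) and apply minimality to $G'=G-S$. We need
\[
w(G)-w(G')\ \ge\ 18 \quad(\text{resp. } 9),
\]
where the difference is the weight of the removed vertices minus the weight increase on $G-S$. That increase arises because vertices on the boundary drop in degree (from $3$ to $2$ costs $1$, from $2$ to $1$ costs $1$, and so on), and because new bad components may appear, each costing $2$.

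In a cubic neighbourhood the ball $N_2[v]$ has up to $10$ vertices of weight $3$, which gives $30$, minus at most the number of edges leaving the ball. This is enough unless there are many outgoing edges or the ball is small because of short cycles. We first derive structure from the easy reductions:
\begin{itemize}
\item A degree-$1$ vertex $u$ with neighbour $x$: put $f(x)=2$ or $f(x)=1$ and delete an appropriate ball. The weight $5$ of $u$ together with the removed neighbours pays for it.
\item Degree-$2$ vertices: handle long induced paths of degree-$2$ vertices (threads) by contraction or deletion arguments, showing that every thread has length at most $1$ or $2$.
\end{itemize}
After this, $G$ is nearly cubic, with isolated degree-$2$ vertices.

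\textbf{Main obstacle.} The remaining case is a cubic-like $G$ containing many $4$-cycles, since Theorem~\ref{thm:cubic:C4} covers the $C_4$-free case. Here the $2$-ball is small and deleting it can create bad components, namely induced $C_4$'s or copies of $K^*_4$, in $G'$. The plan is to analyse the local structure around a $4$-cycle $C$:
\begin{itemize}
\item $C$ lies in a $K_4^-$ (two triangles sharing an edge);
\item $C$ lies in a $K_{3,3}$ minus an edge, or in a prism;
\item $C$ is isolated, with four distinct external neighbours.
\end{itemize}
In each configuration we pick $v$ on $C$ or adjacent to it, so that $N_2[v]$ absorbs the whole dense cluster. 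We choose the deleted set so that no $4$-cycle or $K^*_4$ can be left hanging as a component. Where that is unavoidable, we add a few extra broadcasts and check that the surplus weight covers the $+2$ penalty.

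The accounting is sensitive. A vertex of $G-S$ adjacent to two vertices of $S$ loses $2$, so we must bound such vertices using the absence of earlier reducible configurations. This case analysis, together with verifying that newly created bad components are paid for, is where I expect most of the work to lie.
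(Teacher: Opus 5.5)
Your framework (minimal counterexample; delete a $2$-ball and set $f(v)=2$) is the paper's. However, the plan puts the difficulty in the wrong place and supplies none of the arguments that resolve it. For a $3$-vertex $v$ on no triangle, a direct count gives
\[
\omega_G(N_2[v])-|\partial(N_2[v])| \;=\; 18+2\sum_{u\in B(v)}\bigl(3-d(u)\bigr)+2\ell(v)\;\ge\;18,
\]
where $B(v)=N_2[v]\setminus N[v]$ and $\ell(v)=|E(G[B(v)])|$. This holds no matter how many edges leave the ball or how much short cycles shrink it, and outside vertices with two neighbours in the ball are already paid for by the edge count. So the deletion can fail only if $G-N_2[v]$ acquires isolated vertices or bad components. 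An isolated vertex jumps from weight $6-d$ to $9$, which is $3$ more than its edge count.

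Your accounting never mentions isolated vertices. Yet the paper proves (Lemma~\ref{Lem:every}) that in a minimal counterexample one of the two obstructions always occurs, so handling them is the entire proof. The paper enlarges $X=N_2[v]$ to its closure $\mathcal{C}(X)$ and bounds $r=\gamma_{b,2}(G[\mathcal{C}(X)])$ (Fact~\ref{observation:r}). It then combines $9r>16a+6i+2(\beta_2+\ell)+18$, where $a,i$ count the $C_4$-components and isolated vertices of $G-X$, with the mod-$9$ cut lemma (Lemma~\ref{Lem:cutedge4}). This reduces matters to the cases $(Q_{3,0,1})$, $(Q_{4,0,2})$, $(Q_{4,1,0})$, each excluded by a long structural analysis. ``Add a few extra broadcasts and check the surplus'' is no substitute. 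The same obstruction blocks your pendant step: deleting $N[x]$ with $f(x)=1$ for a $(1,3,3)$-vertex $x$ saves at least $14-4=10$ only while nothing becomes isolated and no $C_4$ splits off. Getting past this takes a separate lemma in the paper (Lemma~\ref{Lem:no_pendant}).

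Your endgame is also misdirected. Theorem~\ref{thm:cubic:C4} concerns cubic graphs and the bound $n/3$. It says nothing about the weighted subcubic inequality, so it cannot dispose of a $C_4$-free case inside an induction on subcubic graphs. In fact a minimal counterexample is never cubic. Once one knows no edge deletion creates a bad component, deleting an edge not at a leaf raises $w$ by exactly $2$. Hence $9\gamma_{b,2}(G)-w(G)\in\{1,2\}$ (Lemma~\ref{Lem:existence_deg_1_2}), which is impossible when $w(G)=3|V(G)|$. This tightness forces the final contradiction to occur around a $(2,3,3)$-vertex, not a ``cubic-like graph with many $4$-cycles''.

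Finally, threads cannot be shortened by plain deletion. Take adjacent $2$-vertices $v_1v_2$ with $3$-neighbours $u_1,u_2$: deleting $\{u_1,v_1,v_2\}$ with $f(v_1)=1$ saves only $11-3=8<9$. The paper needs gadget replacements instead:
\begin{itemize}
\item $G-\{v_1,v_2,u_1\}+w_1u_2$, a weight drop of $10$, together with a repair of the broadcast;
\item collapsing a $4$-cycle with two boundary edges to a single vertex.
\end{itemize}
The later counting also needs the exact conclusion that every $2$-vertex is a $(3,3)$-vertex, not merely that threads are short.
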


As a corollary of Theorem~\ref{Thm:Main_subcubic}, Conjecture~\ref{conj} holds.

\begin{Thm}\label{Thm:Main_cubic}
Conjecture~\ref{conj} is true.
\end{Thm}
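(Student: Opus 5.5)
We derive Theorem~\ref{Thm:Main_cubic} directly from Theorem~\ref{Thm:Main_subcubic}, which we take as given. Let $G$ be a cubic graph on $n$ vertices. Every vertex has degree $3$, so $n_0(G)=n_1(G)=n_2(G)=0$ and $n_3(G)=n$. The only remaining quantity is $b(G)$, the number of bad connected components.

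The key step is to show that $b(G)=0$. A bad component is isomorphic either to an induced $4$-cycle or to $K^*_4$, the graph obtained from $K_4$ by subdividing one edge.
\begin{itemize}
\item A $4$-cycle component is $2$-regular, so it has no vertex of degree $3$.
\item In $K^*_4$, the subdivision vertex has degree $2$.
\end{itemize}
Hence neither graph can be a connected component of a cubic graph, and $b(G)=0$.

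Substituting these values into Theorem~\ref{Thm:Main_subcubic} gives $9\gamma_{b,2}(G)\le 3n$, that is, $\gamma_{b,2}(G)\le n/3$. This is exactly Conjecture~\ref{conj}.

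This step carries no real difficulty. The whole weight of the argument lies in Theorem~\ref{Thm:Main_subcubic}, which I would prove by a minimal counterexample argument. Taking $G$ minimal, $G$ is connected, since the weight function is additive over components, and $G$ is not bad, since bad graphs are checked directly: for instance, $C_4$ has $\gamma_{b,2}=1$, and $9\le 4\cdot4+2$. I would then use reducible configurations, deleting a small set of vertices that is dominated cheaply by one broadcast of cost $1$ or $2$. The main obstacle there is the accounting: the deleted set reduces the degrees of its neighbours, and deletion may create new bad components, each contributing an extra $+2$. These effects must be balanced against the weight saved.
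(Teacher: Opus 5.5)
Your argument is correct and follows the paper's route exactly: for cubic $G$ one has $n_0=n_1=n_2=0$ and $n_3=|V(G)|$, and $b(G)=0$ because both $C_4$ and $K_4^*$ contain a $2$-vertex, so Theorem~\ref{Thm:Main_subcubic} gives $9\gamma_{b,2}(G)\le 3|V(G)|$. One slip in your aside on Theorem~\ref{Thm:Main_subcubic}: $\gamma_{b,2}(C_4)=2$, not $1$, since a single vertex of value $1$ misses its antipodal vertex; hence $C_4$ meets the bound $9\cdot 2=18=4\cdot 4+2$ with equality, which is precisely why the $+2b(G)$ term is needed.
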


In this paper, we  prove Theorem~\ref{Thm:Main_cubic}.  Section 2 provides straightforward observations and Section 3 gives the proof of Theorem~\ref{Thm:Main_subcubic}.

Here are some notations and terminology used in the paper. Let $G$ be a graph. If a vertex $v$ has degree $d$, we say that $v$ is a {\it $d$-vertex}. A {\it $d^-$-vertex}  (resp. {\it $d^+$-vertex}) is a vertex of degree at most $d$ (resp. at least $d$). 
If a $d$-vertex (resp. $d^+$-vertex, $d^-$-vertex) $u$ is a neighbor of $v$, then $u$ is called a $d$-neighbor (resp. $d^+$-neighbor, $d^-$-neighbor) of $v$.
For a $k$-vertex $v$, if the neighbors of $v$ are $d_1$-,\ldots, $d_k$-vertex, then we call a vertex $v$ a {\it $(d_1, \ldots, d_k)$-vertex}. For example, a $(2^-,2,3)$-vertex means a $3$-vertex with one $2^-$-neighbor, one $2$-neighbor, and one $3$-vertex.
For a vertex $v$,
the degree of $v$  and the neighborhood of $v$ are denoted by 
$d_G(v)$ and $N_G(v)$,  respectively, and we denote by $N_G[v]=N_G(v)\cup \{v\}$.
For a subset $X \subset V(G)$,
$N_G(X)=({ \cup_{x\in X}} N_G(x))\setminus X$ and $N_G[X]=\cup_{x\in X} N_G[x]$. 
We also let $N_{G,2}[v]=N_G[N_G[v]]$, which is the set of all vertices with distance at most $2$ from $v$.
We say that a vertex $v$ not in $X$ is {\it adjacent to} $X$ if $v$ is adjacent to some vertex in $X$. 
For two disjoint subsets $X$ and $Y$ in $V(G)$, we denote by $E_G[X,Y]$ the set of edges in $G$ that join a vertex of $X$ and a vertex of $Y$, and we also let $\partial_G(X)=E_G[X,V(G)\setminus X]$.
When there is no confusion,
we often omit the subscript $G$ such as 
$d(v)$, $N(v)$, $N[v]$, $N_2[v]$, $N(X)$, $N[X]$, $E[X,Y]$, and $\partial(X)$.

\section{The weight function of a graph}

Let $G$ be a subcubic graph.
For each vertex $v$ in $G$, we define the {\it weight} of $v$  in $G$ as
\[\omega_G(v) =
		\begin{cases}
				9&\text{if } d(v)=0\\
				6-d(v)&\text{if } d(v)=i \text{ and }i>0.
		\end{cases}
\]
Given a set $X \subset V(G)$, we let \[\omega_G(X)=\sum_{v\in X}\omega_G(v).\]
Now we define the {\it weight} of $G$ as 
\[\scalebox{1.4}{$\omega$}(G)=9n_0(G)+5n_1(G)+4n_2(G)+3n_3(G)+2b(G).\]
A  \textit{$C_4$-component (resp. $K^*_4$-component)} is a connected component isomorphic to $C_4$ (resp. $K^*_4$). 
We denote by $b_1(G)$ (resp. $b_2(G)$) the  number of $C_4$-components (resp. $K^*_4$-components) of $G$. Note that $b(G)=b_1(G)+b_2(G)$ and it may happen 
$\scalebox{1.4}{$\omega$}(G)\neq \omega_G(V(G))$.

\begin{Fact}\label{Fact:WG:even}
Let $G$ be a subcubic graph with $n_0(G)=0$.
Then the following hold.
\begin{itemize} 
\item[\rm(i)]  $\scalebox{1.4}{$\omega$}(G)$ is an even integer.
\item[\rm(ii)] If $b(G)=0$, then $\scalebox{1.4}{$\omega$}(G)= 6|V(G)|-2|E(G)|$. 
\item[\rm(iii)] If $b(G)=0$  and  $n_0(G-v)=b(G-v)=0$ for a $3$-vertex $v$, then $\scalebox{1.4}{$\omega$}(G)=\scalebox{1.4}{$\omega$}(G-v)$.
\end{itemize}
\end{Fact}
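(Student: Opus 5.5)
The plan is to write $\omega(G)$ as a sum over degree classes and then use the handshake lemma. Since $n_0(G)=0$, we have
\[\omega(G)=5n_1(G)+4n_2(G)+3n_3(G)+2b(G)=\sum_{v\in V(G)}(6-d(v))+2b(G)=6|V(G)|-\sum_{v}d(v)+2b(G).\]
By the handshake lemma, $\sum_v d(v)=2|E(G)|$. Hence
\[\omega(G)=6|V(G)|-2|E(G)|+2b(G).\]
This single identity gives (i) and (ii) at once. Every term on the right is an even integer, so $\omega(G)$ is even. If $b(G)=0$, the last term vanishes and we obtain (ii).

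For (iii), let $v$ be a $3$-vertex and set $H=G-v$. Then $|V(H)|=|V(G)|-1$ and $|E(H)|=|E(G)|-3$. The assumptions $n_0(H)=0$ and $b(H)=0$ let us apply (ii) to $H$ as well as to $G$. This gives
\[\omega(H)=6(|V(G)|-1)-2(|E(G)|-3)=6|V(G)|-2|E(G)|=\omega(G).\]

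The only point needing care is that (ii) must be applied to $H$, and this is exactly what the hypotheses $n_0(G-v)=b(G-v)=0$ guarantee. Nothing else is an obstacle; the whole argument is a direct computation.
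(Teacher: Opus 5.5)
Your proof is correct and follows the paper's argument: (ii) comes from the base weight $6$ per vertex minus one per edge-endpoint, and (iii) comes from applying (ii) to both $G$ and $G-v$. The only cosmetic difference is in (i): the paper uses the parity of $n_1(G)+n_3(G)$, the number of odd-degree vertices, while you read evenness off the identity $\scalebox{1.4}{$\omega$}(G)=6|V(G)|-2|E(G)|+2b(G)$, and both are just the handshake lemma.
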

\begin{proof}
Note that $n_1(G)+n_3(G)$ is the number of vertices with odd degree and so it  is even. Since $n_0(G)=0$,
$\scalebox{1.4}{$\omega$}(G)$ is an even integer, and so (i) holds. 
Since $n_0(G)=0$ and each edge decreases weight $1$ to each of its endpoints, (ii) holds.
It is easy to see that (iii) holds from (ii).    
\end{proof}

\begin{Lem}\label{Lem:b(G)=0}
Let $G$ be a subcubic graph such that $\scalebox{1.4}{$\omega$}(G) < 9\gamma_{b,2}(G)$ and
$\scalebox{1.4}{$\omega$}(H)\ge 9\gamma_{b,2}(H)$ for every proper subgraph $H$ of $G$. 
Then $G$ is connected, $\Delta(G)=3$, $|V(G)|\ge 8$, $b(G)=0$, and every $4$-cycle is an induced $4$-cycle.
\end{Lem}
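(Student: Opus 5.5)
We prove each conclusion separately, using only minimality and small direct constructions. Throughout, write $\omega$ for $\scalebox{1.4}{$\omega$}$.

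\emph{Connectivity.} Suppose $G$ is disconnected, say $G=G_1\cup G_2$ with both parts nonempty. Each $G_i$ is a proper subgraph, and its degrees are the same as in $G$. Its bad components are exactly the bad components of $G$ lying in $G_i$. Hence $\omega(G)=\omega(G_1)+\omega(G_2)$, and also $\gamma_{b,2}(G)=\gamma_{b,2}(G_1)+\gamma_{b,2}(G_2)$. By minimality, $\omega(G)\ge 9\gamma_{b,2}(G_1)+9\gamma_{b,2}(G_2)=9\gamma_{b,2}(G)$, a contradiction. So $G$ is connected.

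\emph{$\Delta(G)=3$ and $|V(G)|\ge 8$.} Since $G$ is connected, we first dispose of small and low-degree cases.
\begin{itemize}
\item If $G=K_1$, then $\omega(G)=9=9\gamma_{b,2}(G)$.
\item If $G$ has a $2$-vertex $v$ or a $1$-vertex, the approach is to look for a vertex broadcasting $2$ that dominates everything. If $\mathrm{diam}(G)\le 2$, one vertex with $f=2$ dominates $G$, so $9\gamma_{b,2}(G)\le 18$.
\item When $\Delta(G)\le 2$, $G$ is a path or a cycle. Here $\gamma_{b,2}$ is computed directly: roughly $\lceil n/5\rceil$ for broadcasts of value $2$ covering $5$ vertices, compared with $\omega\approx 4n$ plus end terms. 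One checks that $9\lceil n/5\rceil$ (or the exact value) is at most $4n+2$ for $P_n$, since $\omega(P_n)=4n+2$. For $C_n$ with $n\ne 4$ the bound is $4n$, and $C_4$ has $\omega=18=9\cdot 2$. The tiny cases $n\le 5$ need a separate check, e.g.\ $C_3$ gives $12\ge 9$ and $C_5$ gives $20\ge 18$.
\end{itemize}
So $\Delta(G)=3$.

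For $|V(G)|\ge 8$, note first that a connected subcubic graph with $\Delta=3$ satisfies $\omega(G)=6n-2|E|\ge 3n$, using Fact~\ref{Fact:WG:even}(ii) when $b(G)=0$. If $n\le 7$, we argue that $\gamma_{b,2}(G)\le 2$, except for small cases where a single $1$-broadcast or a $2$-broadcast already suffices. If some vertex has eccentricity at most $2$, then $\gamma_{b,2}\le 2$, and $3n\ge 18$ requires $n\ge 6$. For $n\le 5$ with $\Delta=3$ we have $\omega\ge 6n-2\cdot\lfloor 3n/2\rfloor$, and we check directly, e.g.\ $K_4$ gives $\omega=12\ge 9$ since $\gamma_{b,2}=1$. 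For $n\in\{6,7\}$, any vertex $v$ of degree $3$ has $|N_2[v]|\ge$ most of the graph. If $v$ does not reach everything within distance $2$, we use two $1$-broadcasts or a $2$-broadcast plus a $1$-broadcast and compare with $\omega\ge 18$ or $\ge 21$. This case analysis for $n\le 7$ is the most tedious step. I would handle it by bounding $\gamma_{b,2}\le \lceil n/3\rceil$-type estimates via a single vertex of degree $3$ at distance-$2$ eccentricity. The leftover exceptional case, $n=7$ with eccentricity $3$ everywhere, gives $\omega\ge 21$ and $\gamma_{b,2}\le 2$ because two closed neighborhoods cover $7$ vertices.

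\emph{$b(G)=0$ and every $4$-cycle is induced.} Since $G$ is connected with $\Delta=3$, $G$ is not $C_4$. If $G=K_4^*$, then $\omega=4\cdot 3+4+2=18$, while $\gamma_{b,2}(K_4^*)\le 2$, so the bound holds. Hence $b(G)=0$.

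Now suppose a $4$-cycle $C=v_1v_2v_3v_4$ has a chord $v_1v_3$. Delete the chord to get $H=G-v_1v_3$. Then $\gamma_{b,2}(G)\le\gamma_{b,2}(H)$, and $\omega(H)=\omega(G)+2+2b(H)$. A bad component of $H$ must contain $v_1$ and $v_3$, each of degree at most $2$ in $H$. So either $H=C_4$, in which case $G$ is $K_4$ minus an edge and $|V(G)|=4<8$, excluded; or $H=K_4^*$, in which case $G$ has $5$ vertices, also excluded. Hence $b(H)=0$, giving $\omega(G)=\omega(H)-2$. This edge-deletion route does not close by itself, so instead I would take $H=G-\{v_2\}$ or contract.

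A cleaner route is the following. With the chord present, $v_1$ and $v_3$ are $3$-vertices whose neighbourhoods lie inside $\{v_2,v_4,x,y\}$, where $x,y$ are their third neighbours. Set $f(v_1)=1$ to dominate $\{v_1,v_2,v_3,v_4\}$, then remove $X=\{v_1,v_2,v_3,v_4\}$ and apply minimality to $H=G-X$. The budget saved is $\omega_G(X)\ge 12$ minus the weight increase on the at most $4$ boundary edges, each costing $1$. We then bound $b(H)$ and $n_0(H)$ contributions via $|V(G)|\ge 8$. This accounting, especially ruling out isolated vertices or bad components created in $H$, is the main obstacle; I expect to handle it by choosing $f(v_1)=2$ instead when an isolated vertex or bad component arises near $x$ or $y$.
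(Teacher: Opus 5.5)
\textbf{The induced-$4$-cycle part is not proved.} Your connectivity and $b(G)=0$ arguments are fine. Deleting the chord only gives $9\gamma_{b,2}(G)\le \omega(G)+2$, as you observe. Deleting $v_2$ fails because a broadcast of $G-v_2$ need not reach $v_2$ (e.g.\ if $v_1,v_3$ hear only from $v_4$ with value $1$). Your fallback of putting $1$ on $v_1$ and removing $X=V(C)$ stops exactly at the accounting for isolated vertices and bad components of $G-X$. The suggested switch to $f(v_1)=2$ would need a weight drop of at least $18$, which you do not establish. (Also $|\partial(X)|\le 2$, not $4$, since $v_1,v_3$ are saturated.)

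The missing idea is that the chord endpoints are closed twins, $N[v_1]=N[v_3]=V(C)$, so you should delete $v_1$. Suppose $v_3$ hears from $u$ in $G-v_1$. Then either $u=v_3$, or a shortest path from $u$ to $v_3$ in $G-v_1$ enters $v_3$ through $v_2$ or $v_4$, both adjacent to $v_1$. Either way $d_G(u,v_1)\le f(u)$, so every $2$-LD broadcast of $G-v_1$ is one of $G$. Moreover, $G-v_1$ is connected and has no isolated vertex. It has at least $5$ vertices, and at least three of them ($v_2,v_3,v_4$) have degree at most $2$, so it is neither $C_4$ nor $K_4^*$. Since $v_1$ is a $3$-vertex, Fact~\ref{Fact:WG:even}(iii) gives $\omega(G-v_1)=\omega(G)$. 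Minimality then yields $9\gamma_{b,2}(G)\le 9\gamma_{b,2}(G-v_1)\le \omega(G)$, a contradiction. This is the paper's argument.

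\textbf{The step $|V(G)|\ge 8$ contains a false claim.} For $n=7$ you assert that, when no vertex has eccentricity at most $2$, two closed neighbourhoods cover $V(G)$. The tree with edges $uv_1,uv_2,uv_3,v_1w,v_2x_1,x_1x_2$ is a counterexample. It has radius $3$, and $\gamma_{b,2}=3$ because the leaves $v_3,w,x_2$ have pairwise disjoint closed neighbourhoods. It satisfies the bound only because $\omega=30\ge 27$. Likewise, a $2$-broadcast plus a $1$-broadcast costs $3$, which contradicts nothing when you only know $\omega\ge 21$.

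You need to split on $|E(G)|$, as the paper does:
\begin{itemize}
\item If $|E(G)|\le 7$, then $\omega(G)=42-2|E(G)|\ge 28$, and $\gamma_{b,2}(G)\le 3$ suffices.
\item If $|E(G)|\ge 8$, take a $3$-vertex $u$. Show that $G-N[u]$ must be $K_1\cup K_2$ (otherwise $\gamma_{b,2}\le 2$), and that no neighbour of $u$ is adjacent to both parts. Then check the remaining configurations for $\gamma_{b,2}\le 2$.
\end{itemize}

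Two smaller slips:
\begin{itemize}
\item $\gamma_{b,2}(P_n)=\gamma_{b,2}(C_n)=\lceil n/3\rceil$, not $\lceil n/5\rceil$, since a value-$2$ broadcast costs $2$. The inequality against the weight still holds with $\lceil n/3\rceil$.
\item For $n=5$ you must note that $\omega<18$ forces degree sequence $(3,3,3,3,2)$, i.e.\ $G=K_4^*$, whose bad-component term restores $\omega=18$.
\end{itemize}
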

\begin{proof}
If $G$ is not connected, then $G=G_1\cup G_2$ and so $\scalebox{1.4}{$\omega$}(G_1)+\scalebox{1.4}{$\omega$}(G_2)=\scalebox{1.4}{$\omega$}(G)$, and so by the assumption,
\[9\gamma_{b,2}(G)\le 9\gamma_{b,2}(G_1)+9\gamma_{b,2}(G_2)\leq \scalebox{1.4}{$\omega$}(G_1)+\scalebox{1.4}{$\omega$}(G_2)=\scalebox{1.4}{$\omega$}(G),\]
which contradicts the assumption that $\scalebox{1.4}{$\omega$}(G) < 9\gamma_{b,2}(G)$.
Thus $G$ is connected. If $\Delta(G)\le 2$, then $G$ is a cycle or a path and it is not difficult to check that $\scalebox{1.4}{$\omega$}(G) \ge 9\gamma_{b,2}(G)$, a contradiction. Thus $\Delta(G)=3$, and therefore $|V(G)|\ge 4$. 
Then it also follows that $\gamma_{b,2}(G)\ge 2$ and so $|V(G)|\ge 5$.
Suppose that $|V(G)|=5$. Then  $\gamma_{b,2}(G)=2$. Thus $3|V(G)|=15\le \scalebox{1.4}{$\omega$}(G)<18=9 \gamma_{b,2}(G)$.
Since $\scalebox{1.4}{$\omega$}(G)$ is even by Fact~\ref{Fact:WG:even}(i), $\scalebox{1.4}{$\omega$}(G)=16$ and so $G$ has four $3$-vertices and one $2$-vertex. Thus $G=K_4^*$ and so
$\scalebox{1.4}{$\omega$}(K^*_4)=4\cdot 3 + 4+2=18$, a contradiction.
Hence, $G$ is a connected graph with at least six vertices and so $b(G)=0$.

Let $C:abcda$ be a $4$-cycle with $ac\in E(G)$.
Since $G\neq K_4^*$, we have $n_0(G-a)=b(G-a)=0$, and therefore
$\scalebox{1.4}{$\omega$}(G-a)=\scalebox{1.4}{$\omega$}(G)$ by Fact~\ref{Fact:WG:even}(iii). 
Since a $2$-LD broadcast of $G-a$ is also a $2$-LD broadcast of $G$, we have a $2$-LD broadcast $f$ of $G$   such that $9f(G)\leq \scalebox{1.4}{$\omega$}(G)$, a contradiction. 
Thus every $4$-cycle of $G$ is an induced $4$-cycle. 

Now we will show that $|V(G)|\ge 8$.
Since $|V(G)|\geq 6$, $\scalebox{1.4}{$\omega$}(G)\ge 18$ and so $\gamma_{b,2}(G)>2$. Then the radius $r$ of $G$ is at least $3$.  
Suppose that $|V(G)|=6$. Since $r\ge 3$, 
$G$ has a spanning path $v_1v_2\cdots v_6$. 
Assigning $1$ to each of $v_2$ and $v_5$ and $0$ to the other vertices gives a $2$-LD broadcast with cost $2$, and so $\gamma_{b,2}(G)\le 2$, a contradiction.
Suppose that $|V(G)|=7$.   Take a $3$-vertex $u$.
If $G-N[u]$ is connected, then $\gamma_{b,2}(G)\le 2$ by assigning $1$ to each of $u$ and one vertex of $G-N[u]$ properly and assigning $0$ to the other vertices, a contradiction. 
If  $G-N[u]$ has at least three components, then the components in $G-N[u]$ are three trivial components and so $\gamma_{b,2}(G)\le 2$ by assigning $2$ to $u$ and $0$ to the other vertices, a contradiction.
Thus $G-N[u]$ has exactly two components $G_1=K_1$  and $G_2=K_2$. Let  $N(u)=\{v_1,v_2,v_3\}$, $V(G_1)=\{w\}$, and $V(G_2)=\{x_1,x_2\}$. (see Figure~\ref{fig:b(G)=0}).
\begin{figure}[h!]\centering
\includegraphics[page=2,width=3cm]{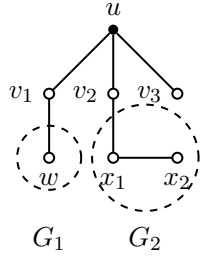} \vspace{-0.8cm}
\caption{The subgraph mentioned in Lemma~\ref{Lem:b(G)=0}} \label{fig:b(G)=0}
\end{figure}
If a vertex $v_i$ in $N(u)$ is adjacent both $G_1$ and $G_2$, then assigning $2$ to $v_i$ and $0$ to the other vertices defines a $2$-LD broadcast of $G$, a contradiction. Thus every vertex in $N(u)$ is adjacent at most one of $G_1$ and $G_2$. 
Therefore we may assume $wv_1\in E(G)$ and $x_1v_2\in E(G)$.
If $|E(G)|=6$ or $7$, then $\scalebox{1.4}{$\omega$}(G)\ge 28$ by Fact~\ref{Fact:WG:even} and $\gamma_{b,2}(G)\le 3$ by assigning $2$ to $u$ and $1$ to $x_2$, a contradiction.
Hence $|E(G)|\ge 8$ and so, by Fact~\ref{Fact:WG:even},  $\scalebox{1.4}{$\omega$}(G)\le 26$. 
Thus $G$ has at least two $3$-vertices.
By considering possibilities of edges joining $\{w,x_1,x_2\}$ and $\{v_1,v_2,v_3\}$, it is not difficult to check that we have $\gamma_{b,2}(G)\le 2$ in every case, which is a contradiction.
 \end{proof}

In the following proofs, as in the proof of Lemma~\ref{Lem:b(G)=0}, we often show that $\gamma_{b,2}(H)\le r$ for a subgraph $H$ of $G$ by assigning the values $0$, $1$, and $2$ to the vertices of $H$. 
Throughout, we typically mention only the vertices assigned the values $1$ or $2$, and we do not explicitly refer to the vertices assigned the value $0$.
 
\begin{Lem} \label{Lem:b_2(G)=0}
Let $G$ be a subcubic graph such that $\scalebox{1.4}{$\omega$}(G) < 9\gamma_{b,2}(G)$ and
$\scalebox{1.4}{$\omega$}(H)\ge 9\gamma_{b,2}(H)$ for every proper subgraph $H$ of $G$.
Then the following hold.
\begin{itemize}
    \item[\rm(i)] $b_2(G')=0$ for every subgraph $G'$ of $G$.
    \item[\rm(ii)] $b_1(G-e)=0$ for every edge $e$.
    \item[\rm(iii)] Every vertex has at most one  $1$-neighbor.

\end{itemize}
\end{Lem}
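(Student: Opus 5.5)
The plan is to prove each part by contradiction. In each case I delete a suitable set of edges or vertices, apply minimality to the resulting proper subgraph $H$, and extend a minimum $2$-LD broadcast of $H$ to $G$ at little or no extra cost. The extension should give $9\gamma_{b,2}(G)\le \scalebox{1.4}{$\omega$}(G)$, contradicting the hypothesis. Throughout I use Lemma~\ref{Lem:b(G)=0}: $G$ is connected, $\Delta(G)=3$, $|V(G)|\ge 8$, $b(G)=0$, and every $4$-cycle is induced.

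For (i), suppose some subgraph $G'$ has a $K_4^*$-component $K$. Since vertices of $K$ have degree $3$ in $K$ (except the subdivision vertex $s$, of degree $2$), and $G$ is subcubic, the only possible edges from $V(K)$ to the rest of $G$ leave from $s$. Because $G$ is connected with $|V(G)|\ge 8$, $s$ has exactly one further neighbor $t$ outside $K$, and $K\cup\{st\}$ is a pendant block attached by the bridge $st$.

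Let $H=G-V(K)$. In $G$, broadcasting $2$ from $s$ covers $V(K)\cup\{t\}$, because every vertex of $K$ is within distance $2$ of $s$. So $\gamma_{b,2}(G)\le \gamma_{b,2}(H)+2$. On the weight side, the five vertices of $K$ contribute $5\cdot 3=15$ to $\scalebox{1.4}{$\omega$}(G)$. Removing $K$ raises the weight of $t$ by $1$, and it may create a new bad component or an isolated vertex in $H$. Then $\scalebox{1.4}{$\omega$}(G)\ge \scalebox{1.4}{$\omega$}(H)+15-1-(\text{corrections})$. A bad component created in $H$ must contain $t$, which costs at most $2$, giving $\scalebox{1.4}{$\omega$}(G)\ge \scalebox{1.4}{$\omega$}(H)+12$. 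This is not yet $18$, so the crude extension is insufficient.

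The fix is to delete more than $V(K)$. I would remove $V(K)\cup\{t\}$, or even $V(K)\cup N[t]$, and let the broadcast $2$ at $s$ also cover $N(t)$'s side when possible. Alternatively, I would choose a better broadcast, for example $1$ at one vertex plus exploiting that $t$ is covered. The goal is a gain of $18$ in weight per $2$ units of cost. I expect this accounting to be the main obstacle. It will need a small case analysis on the degree of $t$ ($2$ or $3$) and on whether $G-V(K)-t$ acquires isolated vertices or $C_4$/$K_4^*$ components. Each such degenerate piece is small enough to be dominated directly, or it forces $G$ itself to be tiny, which contradicts $|V(G)|\ge 8$.

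For (ii), suppose $G-e$ has a $C_4$-component $C$. Since $G$ is connected and subcubic, $e$ joins a vertex $c\in V(C)$ to some vertex $x$, and $C$ is otherwise isolated. I would take $H=G-V(C)$. Broadcasting $2$ from $c$ covers $C$ and $x$, so $\gamma_{b,2}(G)\le\gamma_{b,2}(H)+2$ (or use $1$ at $c$ plus its antipode as needed). The weight of $C$ in $G$ is $3\cdot 4+3=15$, and the correction at $x$ and any newly created bad components are handled exactly as in (i). Part (i) already rules out $K_4^*$-components appearing, which shortens the case analysis.

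For (iii), suppose $v$ has two $1$-neighbors $a,b$. Let $w$ be the third neighbor; it exists since $G$ is connected and $|V(G)|\ge 8$. Delete $\{v,a,b\}$ to get $H$. Broadcasting $1$ from $v$ covers $a,b,v,w$, so $\gamma_{b,2}(G)\le \gamma_{b,2}(H)+1$. The weight loss is $5+5+3=13$, minus $1$ for $w$, giving at least $12\ge 9$. The only remaining danger is $w$ becoming isolated in $H$ or lying in a new bad component. Both are handled using (i) and (ii), or by direct domination, since $w$ is already covered.
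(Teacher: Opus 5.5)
Parts (i) and (ii) have genuine gaps; only (iii) is complete. In (i), your own count shows that the deletion argument does not close. Removing $V(K)$ costs $2$ in the broadcast but lowers the weight by at most $15-1=14<18$. The suggested repairs are not carried out, and the first one still falls short: if $t$ is a $3$-vertex, deleting $V(K)\cup\{t\}$ gains only $15+3-2=16$. The missing observation is that no accounting is needed. You already quote from Lemma~\ref{Lem:b(G)=0} that every $4$-cycle of $G$ is induced, and $K_4^*$ violates this. If $K_4^*$ is $K_4$ on $a,b,c,d$ with $ab$ subdivided, then $acbda$ is a $4$-cycle with chord $cd$. So $G$ has no subgraph isomorphic to $K_4^*$ at all, which is the paper's one-line proof of (i).

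In (ii) the same deficit recurs, and ``handled exactly as in (i)'' points to an argument that does not exist. Deleting all of $V(C)$ and paying $2$ needs a weight drop of $18$, but you get at most $15-1=14$. The fix is to delete less and pay only $1$:
Let $c$ be the end of $e$ on $C$, let $x'$ be the vertex of $C$ opposite $c$, and let $X=N[x']$. This set induces a path on three $2$-vertices.
Then $H=G-X$ is connected and $c$ is a $1$-vertex of $H$, so $b(H)=n_0(H)=0$.
The weight drops by $12-2=10\ge 9$.
Assigning $1$ to $x'$ extends any $2$-LD broadcast of $H$, so $9\gamma_{b,2}(G)\le 9\gamma_{b,2}(H)+9\le \scalebox{1.4}{$\omega$}(G)-1$, a contradiction.
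This is the paper's Claim~\ref{Clm:partial=1}, which treats (ii) and (iii) uniformly. Your (iii) is precisely that argument with $X=\{v,a,b\}$ and is fine. It needs neither (i) nor (ii): $H$ is connected, so $b(H)\le 1$ and the drop is at least $12-2=10$, while $w$ isolated in $H$ would force $G=K_{1,3}$.
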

\begin{proof}
(i): By Lemma~\ref{Lem:b(G)=0}, every $4$-cycle is an induced cycle in $G$ and so $b_2(G')=0$.

To show (ii) and (iii),
we first prove the following.

\begin{Clm}\label{Clm:partial=1}
The graph $G$ contains no induced subgraph $G[X]$ such that 
$|X|\ge 3$, $|N(X)| \le 1$, and $G[X]$ has a vertex $x$ satisfying that $N_{G[X]}[x]=X$.
\end{Clm}
\begin{proof}
 By Lemma~\ref{Lem:b(G)=0}, $G$ is connected with $|V(G)|\geq 8$ and so  $N(X)\neq\emptyset$ for every subset $X\subset V(G)$ with $|X|\le 7$.
To the contrary, suppose that there is an induced subgraph $G[X]$ such that $|X|\ge 3$, $|N(X)|=1$, and $G[X]$ has a vertex $x$ satisfying that $N_{G[X]}[x]=X$.
Then $|X|\leq 4$.
We take such $G[X]$ with $|\partial(X)|$ as small as possible.  
For simplicity, let $H=G-X$.
Since $|N(X)|=1$, $H$ is connected. 
Since $H$ is a proper subgraph of $G$, $9\gamma_{b,2}(H)\leq \scalebox{1.4}{$\omega$}(H)$. 

Say $N(X)=\{y\}$. 
Recall that $\Delta(G)= 3$, $|V(G)|\ge 8$, $b(G)=0$ by Lemma~\ref{Lem:b(G)=0}. 
Then $y$ is a $2^-$-vertex in $H$ and $y$ is not an isolated vertex in $H$.
We will show that $b(H)=0$.
If $y$ is a $1$-vertex in $H$, then $H$ has a pendent edge and $b(H)=0$. 
Suppose that $y$ is a $2$-vertex in $H$ and $b(H)>0$. Then $H$ is $C_4$ or $K_4^*$. Since every $4$-cycle is an induced cycle by Lemma~\ref{Lem:b(G)=0}, $H=C_4$ and so 
$G$ has at least three $2$-vertices. Since $|V(G)|\ge 8$ by Lemma~\ref{Lem:b(G)=0},  $\scalebox{1.4}{$\omega$}(G)\geq 4\cdot 3+3\cdot 5=27$.
By assigning $1$ and $2$ to $x$ and $y$, respectively, we have $\gamma_{b,2}(G)\le 3$, and so  $\scalebox{1.4}{$\omega$}(G) \ge 27\ge 9\gamma_{b,2}(G)$, a contradiction.
Hence $b(H)=0$.

Recall that $y$ is not isolated in $H$. Then $|\partial(X)|\le 2$. In addition, since $x$ is a $2^+$-vertex and $ y$ has at most two neighbors in $N[x]$, it follows from $b(H)=b(G)=0$  that $\scalebox{1.4}{$\omega$}(G)-\scalebox{1.4}{$\omega$}(H)=\omega_G(X)-|\partial(X)|=\scalebox{1.4}{$\omega$}(G[X])-2|\partial(X)|$.  
We will show that $\scalebox{1.4}{$\omega$}(G)-\scalebox{1.4}{$\omega$}(H)\ge 9$.
If $|X|=4$, then  $\scalebox{1.4}{$\omega$}(G)-\scalebox{1.4}{$\omega$}(H)=\omega_G(X)-|\partial(X)|\ge 12-2=10$.
Suppose that $|X|=3$. Then $G[X]$ is a path or a triangle. 
If $G[X]$ is a triangle and $|\partial(X)|= 2$, 
then $G[X\cup\{y\}]$   contains a $4$-cycle that is not an induced cycle, a contradiction to Lemma~\ref{Lem:b(G)=0}.
Thus $G[X]$ is a path or $|\partial(X)| \le 1$, and so 
$\scalebox{1.4}{$\omega$}(G)-\scalebox{1.4}{$\omega$}(H)=\scalebox{1.4}{$\omega$}(G[X])-2|\partial(X)|\ge 9$.

Let $f'$ be a minimum $2$LD-broadcast in $H$. 
Then ${\rm cost}(f')=\gamma_{b,2}(H)$. 
We extend $f'$ to a function $f$ on $V(G)$ by assigning $1$ to $x$. Clearly, $f$ is a $2$LD-broadcast in $G$ and  $\gamma_{b,2}(G)\le {\rm{cost}}(f)=\gamma_{b,2}(H)+1$. 
Hence,
\[ \scalebox{1.4}{$\omega$}(G) < 9\gamma_{b,2}(G) \leq 9\gamma_{b,2}(H)+9 \leq \scalebox{1.4}{$\omega$}(H)+9  \leq \scalebox{1.4}{$\omega$}(G),\]
a contradiction.
\end{proof}

(ii):  To the contrary, suppose that
	there exists an edge $e$ of $G$ such that $G-e$ contains a $C_4$-component $C$.
   Since $|V(G)|\ge 5$, $e$ is a cut-edge in $G$. 
For the $2$-vertex $x$ of $C$ that is farthest from $e$, let $X=N[x]$. 
Then it follows that $|X|\ge 3$, $|N(X)|=1$, and $G[X]$ has vertex $x$ satisfying that $N_{G[X]}[x]=X$, a contradiction to Claim~\ref{Clm:partial=1}. 

(iii): To the contrary, suppose that $G$ contains a vertex $x$ with two  $1$-neighbors. Let $X=\{x\}\cup \{ v\in N(x) \mid d(v)=1\}$. 
Then   $|X|\ge 3$, $|N(X)|=1$, and $N_{G[X]}[x]=X$, a contradiction to Claim~\ref{Clm:partial=1}. \end{proof}

\section{Proof of Theorem~\ref{Thm:Main_subcubic}}

Suppose, to the contrary, that Theorem~\ref{Thm:Main_subcubic} is false.
We take a  minimal counterexample $G$ with respect to the number of vertices and edges.
By Lemma~\ref{Lem:b(G)=0}, $G$ is connected, $\Delta(G)=3$, $|V(G)|\ge 8$, $b(G)=0$, and  every $4$-cycle is an induced $4$-cycle.
In addition, $n_0(G)=0$ and so 
\begin{equation}\label{eq:first}
9\gamma_{b,2}(G)>\scalebox{1.4}{$\omega$}(G)=5n_1(G)+4n_2(G)+3n_3(G).\end{equation}

\begin{Lem}\label{Lem:existence_deg_1_2}
It holds that $9\gamma_{b,2}(G)-\scalebox{1.4}{$\omega$}(G)=1$ or $2$.
Therefore, the graph $G$ contains a $2^-$-vertex. 
\end{Lem}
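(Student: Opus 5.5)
Throughout, write $\omega(G)$ for the weight $\scalebox{1.4}{$\omega$}(G)$ of $G$. The plan is to show that deleting a single edge from $G$ costs at most $2$ in weight, and then use minimality of $G$.

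First, the lower bound. By (\ref{eq:first}) we have $9\gamma_{b,2}(G)-\omega(G)>0$. By Fact~\ref{Fact:WG:even}(i), $\omega(G)$ is even, since $n_0(G)=0$. So the difference is a positive integer, and it is at least $1$.

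Next, the upper bound. Pick any edge $e=uv$ of $G$ and let $H=G-e$. Since $H$ is a proper subgraph of $G$, minimality gives $9\gamma_{b,2}(H)\le \omega(H)$. Every $2$-LD broadcast of $H$ is also a $2$-LD broadcast of $G$, because adding an edge can only shrink distances. Hence $\gamma_{b,2}(G)\le\gamma_{b,2}(H)$.

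It remains to compare $\omega(H)$ with $\omega(G)$, which is the main point of care.
\begin{itemize}
\item Removing $e$ lowers the degrees of $u$ and $v$ by one each. Each of these raises that vertex's weight by $1$, unless the vertex becomes isolated, in which case its weight jumps from $5$ to $9$.
\item $n_0(H)=0$ would require $e$ to be a pendant edge. If $u$ were a $1$-vertex, its weight would rise by $4$. So I must choose $e$ with both endpoints of degree at least $2$ in $G$.
\item Such an edge exists. Otherwise every edge has a $1$-vertex endpoint. Then any $3$-vertex (one exists since $\Delta(G)=3$) has three $1$-neighbours, contradicting Lemma~\ref{Lem:b_2(G)=0}(iii).
\item Also $b(H)=0$, since $b_2(H)=0$ by Lemma~\ref{Lem:b_2(G)=0}(i) and $b_1(H)=0$ by Lemma~\ref{Lem:b_2(G)=0}(ii).
\item We have $b(G)=0$ by Lemma~\ref{Lem:b(G)=0}. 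Together with $n_0(H)=n_0(G)=0$ and Fact~\ref{Fact:WG:even}(ii) applied to each component of $H$ (or a direct count), this gives $\omega(H)=\omega(G)+2$.
\end{itemize}

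Therefore
\[
9\gamma_{b,2}(G)\le 9\gamma_{b,2}(H)\le\omega(H)=\omega(G)+2,
\]
so the difference is $1$ or $2$.

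For the final claim, suppose $G$ had no $2^-$-vertex. Then $n_0(G)=0$ and $G$ is cubic, so $\omega(G)=3n_3(G)=3|V(G)|$.

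This alone contradicts nothing, so I would instead pick an edge $e$ of the cubic graph $G$. Then $\omega(G-e)=\omega(G)+2$ exactly, as computed above, and this does not immediately help either.

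The better route is to delete a vertex $v$ instead of an edge. Its three neighbours become $2$-vertices, so $n_0(G-v)=0$, and $b(G-v)=0$ by arguments similar to Lemma~\ref{Lem:b_2(G)=0}. Fact~\ref{Fact:WG:even}(iii) then gives $\omega(G-v)=\omega(G)$.

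However, a broadcast on $G-v$ need not dominate $v$. So I would instead use $N[v]$: delete $v$ and take a minimum broadcast of $G-v$ in which some neighbour hears… This is fragile.

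Simplest is to observe that the equality $9\gamma_{b,2}-\omega\in\{1,2\}$ with $\omega=3|V(G)|$ a multiple of $3$ is not contradictory by itself. So I expect the second sentence to need the following argument instead:
\begin{itemize}
\item For cubic $G$, choose $v$ and set $H=G-v$. Then $\omega(H)=\omega(G)$ and $9\gamma_{b,2}(H)\le\omega(G)$.
\item Take a minimum broadcast $f$ of $H$. If some neighbour of $v$ receives a positive value, then $f$ already dominates $v$ in $G$, contradicting $9\gamma_{b,2}(G)>\omega(G)$.
\item So every minimum broadcast of $G-v$ is zero on $N(v)$ for every $v$.
\end{itemize}
Exploiting this across choices of $v$ is the main obstacle. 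I would try switching $f$ locally: raise the value at the vertex $w$ that dominates a neighbour $x$ of $v$, or move that value onto $x$. I would attempt this first.
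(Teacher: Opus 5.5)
Your argument for the first sentence is correct and is the paper's argument. You delete an edge $e$ not incident to a $1$-vertex and note that $n_0(G-e)=0$ and $b(G-e)=0$ by Lemma~\ref{Lem:b_2(G)=0}(i),(ii). Hence $\omega(G-e)=\omega(G)+2$, and you combine minimality with $\gamma_{b,2}(G)\le\gamma_{b,2}(G-e)$. Your reason such an edge exists (otherwise a $3$-vertex would have three $1$-neighbours) is a fine substitute for the paper's remark that $G$ is not a star. Two cosmetic points: the bullet should read ``$n_0(H)>0$ would require $e$ to be a pendant edge'', and the parity of $\omega(G)$ is not needed for the lower bound, since the difference is a positive integer anyway.

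The second sentence, however, is not proved, and the cause is a false claim in your write-up. You assert that $9\gamma_{b,2}(G)-\omega(G)\in\{1,2\}$ together with $\omega(G)=3|V(G)|$ ``is not contradictory by itself''. It is contradictory, and this observation is the whole of the paper's proof. If $G$ had no $2^-$-vertex, then, as $n_0(G)=0$, we have $\omega(G)=3n_3(G)=3|V(G)|$. Hence
\[9\gamma_{b,2}(G)-\omega(G)=3\bigl(3\gamma_{b,2}(G)-|V(G)|\bigr)\equiv 0 \pmod 3,\]
which is incompatible with the difference being $1$ or $2$.

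The vertex-deletion route you start instead is left unfinished. You show that every minimum $2$-LD broadcast of $G-v$ vanishes on $N(v)$, but never turn this into a contradiction, and $b(G-v)=0$ is only asserted ``by arguments similar to'' an earlier lemma. As it stands, the existence of a $2^-$-vertex has no proof. Replace that entire segment with the congruence above.
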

\begin{proof} Since $G$ is not isomorphic to $K_{1,t}$, there is an edge $e$ in $G$ such that $e$ is not incident to any $1$-vertex. Consider $G':=G-e$.
By Lemma~\ref{Lem:b_2(G)=0}(i) and (ii),
$b(G')=0$, 
	and so $\scalebox{1.4}{$\omega$}(G')=\scalebox{1.4}{$\omega$}(G)+2$. 
	Since $|V(G)|=|V(G')|$ and $|E(G)|>|E(G')|$,
	$9\gamma_{b,2}(G')\leq \scalebox{1.4}{$\omega$}(G')$ by the choice of $G$.
Since $G'$ is a subgraph of $G$, 
	$\gamma_{b,2}(G) \leq \gamma_{b,2}(G')$.
	Then
	\[\scalebox{1.4}{$\omega$}(G) <9\gamma_{b,2}(G) \leq 9\gamma_{b,2}(G')  \leq \scalebox{1.4}{$\omega$}(G')=\scalebox{1.4}{$\omega$}(G)+2  \]
Hence $9\gamma_{b,2}(G)-\scalebox{1.4}{$\omega$}(G)=1$ or $2$. 

If $n_1(G)=n_2(G)=0$, then from \eqref{eq:first}, $\scalebox{1.4}{$\omega$}(G)=3n_3(G)=3|V(G)|$, which contradicts the fact that $9\gamma_{b,2}(G)=\scalebox{1.4}{$\omega$}(G)+k$ for some $k\in \{1,2\}$.
Therefore $n_1(G)\neq 0$ or $n_2(G)\neq 0$, and so $G$ has a $2^-$-vertex.
\end{proof}
 
Let $f_i$ be a function defined on $V_i$ for each $i\in\{1,2\}$.
When $V_1\cap V_2=\emptyset$,
we denote by $f_1 \cup f_2$ the function $f$ defined by $f(x)=f_i(x)$ if $x\in V_i$ for some $i\in\{1,2\}$. 

\begin{Lem}\label{Lem:cutedge4}
Let $S$ be an edge cut of $G$ such that $G-S=G_1\cup G_2$.
For each $i\in \{1,2\}$, let $r_i\in \{0,1,\ldots,8\}$ such that
$\scalebox{1.4}{$\omega$}(G_i)\equiv r_i \pmod{9}$.
If $n_0(G-S)=b_1(G-S)=0$, then $r_1+r_2\le 2|S|-1$.
\end{Lem}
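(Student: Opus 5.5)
We argue by comparing weights across the cut. The graphs $G_1$ and $G_2$ are proper subgraphs of $G$, so by minimality $9\gamma_{b,2}(G_i)\le \scalebox{1.4}{$\omega$}(G_i)$ for $i\in\{1,2\}$. Since $\gamma_{b,2}(G_i)$ is an integer and $\scalebox{1.4}{$\omega$}(G_i)\equiv r_i \pmod 9$ with $0\le r_i\le 8$, this sharpens to
\[
9\gamma_{b,2}(G_i)\le \scalebox{1.4}{$\omega$}(G_i)-r_i .
\]
The union of a minimum $2$-LD broadcast on $G_1$ and one on $G_2$ is a $2$-LD broadcast on $G$, because $G_1\cup G_2$ is a spanning subgraph of $G$. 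Hence
\[
9\gamma_{b,2}(G)\le \scalebox{1.4}{$\omega$}(G_1)+\scalebox{1.4}{$\omega$}(G_2)-r_1-r_2 .
\]

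Next we compute $\scalebox{1.4}{$\omega$}(G-S)$ in terms of $\scalebox{1.4}{$\omega$}(G)$. Since $n_0(G-S)=0$, every vertex keeps degree at least $1$, so deleting an edge raises the weight of each endpoint by exactly $1$, as in Fact~\ref{Fact:WG:even}(ii). We also need $b(G-S)=0$. The hypothesis gives $b_1(G-S)=0$, and Lemma~\ref{Lem:b_2(G)=0}(i) gives $b_2(G-S)=0$, since $G-S$ is a subgraph of $G$. Together with $b(G)=0$, this yields
\[
\scalebox{1.4}{$\omega$}(G_1)+\scalebox{1.4}{$\omega$}(G_2)=\scalebox{1.4}{$\omega$}(G-S)=\scalebox{1.4}{$\omega$}(G)+2|S| .
\]

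Combining the two displays gives
\[
9\gamma_{b,2}(G)\le \scalebox{1.4}{$\omega$}(G)+2|S|-r_1-r_2 .
\]
By \eqref{eq:first}, $9\gamma_{b,2}(G)>\scalebox{1.4}{$\omega$}(G)$, so $0<2|S|-r_1-r_2$. Since all quantities are integers, $r_1+r_2\le 2|S|-1$.

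The proof is routine. The only step needing care is the weight bookkeeping: the degree-$0$ weight of $9$ breaks linearity, which is why the hypothesis $n_0(G-S)=0$ is needed. The bad-component correction term must also vanish, which requires both $b_1(G-S)=0$ (assumed) and $b_2(G-S)=0$ (from Lemma~\ref{Lem:b_2(G)=0}(i)).
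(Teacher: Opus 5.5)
Your proof is correct and follows essentially the same argument as the paper. The paper writes $\scalebox{1.4}{$\omega$}(G_i)=9q_i+r_i$ and uses minimality to obtain $2$-LD broadcasts of cost at most $q_i$, which is exactly your sharpened bound $9\gamma_{b,2}(G_i)\le \scalebox{1.4}{$\omega$}(G_i)-r_i$; it then combines this with the same identity $\scalebox{1.4}{$\omega$}(G)=\scalebox{1.4}{$\omega$}(G_1)+\scalebox{1.4}{$\omega$}(G_2)-2|S|$, justified by $n_0(G-S)=0$ and $b(G-S)=0$ via Lemma~\ref{Lem:b_2(G)=0}(i).
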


\begin{proof} 
For each $i\in \{1,2\}$, let $q_i$ be a nonnegative integer such that $\scalebox{1.4}{$\omega$}(G_i)=9q_i+r_i$. 
Suppose that $n_0(G-S)=b_1(G-S)=0$.
Then by Lemma~\ref{Lem:b_2(G)=0}(i) $b(G_1)=b(G_2)=0$.
By the minimality of $G$, for $i=1,2$, there is a $2$-LD broadcast $f_i$ of $G_i$ with cost at most $q_i$. 
Then $f_1 \cup f_2$ is a $2$-LD broadcast of $G$.
Therefore $9(q_1+q_2)\ge 9\gamma_{b,2}(G)> \scalebox{1.4}{$\omega$}(G)=\scalebox{1.4}{$\omega$}(G_1)+\scalebox{1.4}{$\omega$}(G_2)-2|S|=9(q_1+q_2)+r_1+r_2-2|S|$ and so $r_1+r_2-2|S|<0$. 
\end{proof}

Let $X$ be a subset  of $V(G)$.
Let $\mathcal{C}(X)$ be the union of $X$ and the vertex sets of isolated vertices or $C_4$-components in $G-X$. 
For a subgraph $X$ of $G$, a vertex $v$ is said to be {\it adjacent} to $X$ if $v$ is adjacent to some vertex of $V(X)$, and $\mathcal{C}(X)$ means $\mathcal{C}(V(X))$.
Note that 
$G-\mathcal{C}(X)$ has neither isolated vertex nor  $C_4$-component, and therefore $\mathcal{C}(\mathcal{C}(X))=\mathcal{C}(X)$.  

\begin{Fact} \label{Fact:main_inequality} 
For $X \subsetneq V(G)$, \[ |\partial(X)|+3n_0(G-X)> \omega_G(X)-2b_1(G-X)-9\gamma_{b,2}(G[X])+q \ge \omega_G(X)-2b_1(G-X)-9\gamma_{b,2}(G[X]),\] where $q=\scalebox{1.4}{$\omega$}(G-X)-9\gamma_{b,2}(G-X)$. 
Moreover,   
$|\partial(\mathcal{C}(X))|-q > \omega_G(\mathcal{C}(X))-9\gamma_{b,2}(G[\mathcal{C}(X)])$, where  $q=\scalebox{1.4}{$\omega$}(G-\mathcal{C}(X))-9\gamma_{b,2}(G-\mathcal{C}(X))$.
\end{Fact}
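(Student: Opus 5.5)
The plan is to compare the weight of $G$ with the weights of the two pieces $G[X]$ and $G-X$, and then use that $G$ is a counterexample while $G-X$ is not. Throughout I will assume $X\neq\emptyset$, so that $G-X$ is a proper subgraph of $G$; this is implicit in the statement, since for $X=\emptyset$ we have $q<0$ and the second inequality fails.

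\textbf{Step 1: weight of $G-X$.} Since $b(G)=n_0(G)=0$ by Lemma~\ref{Lem:b(G)=0}, we have $\scalebox{1.4}{$\omega$}(G)=\omega_G(V(G))=\omega_G(X)+\omega_G(V(G)\setminus X)$. Now fix $v\in V(G)\setminus X$ and let $d'$ be its degree in $G-X$.
\begin{itemize}
\item If $d'>0$, then $\omega_{G-X}(v)-\omega_G(v)=(6-d')-(6-d(v))=d(v)-d'$.
\item If $d'=0$, then $\omega_{G-X}(v)-\omega_G(v)=9-(6-d(v))=3+(d(v)-d')$.
\end{itemize}
Summing over all $v\notin X$, the terms $d(v)-d'$ add up to exactly $|\partial(X)|$. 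By Lemma~\ref{Lem:b_2(G)=0}(i) we have $b(G-X)=b_1(G-X)$. Therefore
\[\scalebox{1.4}{$\omega$}(G-X)=\omega_G(V(G)\setminus X)+|\partial(X)|+3n_0(G-X)+2b_1(G-X),\]
and hence
\[\scalebox{1.4}{$\omega$}(G)=\omega_G(X)+\scalebox{1.4}{$\omega$}(G-X)-|\partial(X)|-3n_0(G-X)-2b_1(G-X).\]

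\textbf{Step 2: combining broadcasts.} Take a minimum $2$-LD broadcast of $G[X]$ and one of $G-X$. Their union $f_1\cup f_2$ is a $2$-LD broadcast of $G$, so
\[9\gamma_{b,2}(G)\le 9\gamma_{b,2}(G[X])+9\gamma_{b,2}(G-X)=9\gamma_{b,2}(G[X])+\scalebox{1.4}{$\omega$}(G-X)-q.\]
Since $G$ is a counterexample, $\scalebox{1.4}{$\omega$}(G)<9\gamma_{b,2}(G)$. Substituting the identity from Step 1 and cancelling $\scalebox{1.4}{$\omega$}(G-X)$ gives
\[|\partial(X)|+3n_0(G-X)>\omega_G(X)-2b_1(G-X)-9\gamma_{b,2}(G[X])+q,\]
which is the first inequality.

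\textbf{Step 3: the second inequality.} Since $X\neq\emptyset$, the graph $G-X$ is a proper subgraph of $G$. By the minimality of $G$, $9\gamma_{b,2}(G-X)\le\scalebox{1.4}{$\omega$}(G-X)$, so $q\ge 0$.

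\textbf{Step 4: the ``moreover'' part.} Apply the first inequality with $\mathcal{C}(X)$ in place of $X$. By construction, $G-\mathcal{C}(X)$ has no isolated vertices and no $C_4$-components, so $n_0(G-\mathcal{C}(X))=b_1(G-\mathcal{C}(X))=0$. Moving $q$ to the left-hand side then gives exactly the stated bound.

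The only genuinely delicate point is the bookkeeping in Step 1. Isolated vertices of $G-X$ jump from weight $6-d(v)$ to $9$, which produces the extra $3$ per isolated vertex, and the term $2b(G-X)$ must be matched with $2b_1(G-X)$ via Lemma~\ref{Lem:b_2(G)=0}(i). Everything else is immediate.
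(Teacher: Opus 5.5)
Your argument is the paper's argument: the same weight identity for $G-X$ (your vertex-by-vertex derivation is a slightly more explicit version of the paper's equation), the same subadditivity $\gamma_{b,2}(G)\le\gamma_{b,2}(G[X])+\gamma_{b,2}(G-X)$ combined with $\scalebox{1.4}{$\omega$}(G)<9\gamma_{b,2}(G)$, and minimality of $G$ to get $q\ge 0$. Your remark that $X\neq\emptyset$ is needed is correct. For $X=\emptyset$ the combined inequality reads $0>0$, and the paper's ``$q\ge 0$ by minimality'' tacitly assumes $X\neq\emptyset$.

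There is one small gap, in Step~4. You apply the first inequality to $\mathcal{C}(X)$, but $\mathcal{C}(X)$ may equal $V(G)$. That lies outside the range $X\subsetneq V(G)$ for which the first inequality is stated.

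This case is not vacuous. The ``moreover'' part is invoked later exactly where $\mathcal{C}(X)=V(G)$ can occur. Examples are Claim~\ref{claim:r_inequality}(iii), whose part (i) explicitly produces $V(G)=\mathcal{C}(X)$, and Lemma~\ref{Lem:partial=2}, where $|\partial(\mathcal{C}(X))|\le 1$ allows $\partial(\mathcal{C}(X))=\emptyset$. The paper treats this case separately.

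The fix is short. If $\mathcal{C}(X)=V(G)$, then $\partial(\mathcal{C}(X))=\emptyset$, and $G-\mathcal{C}(X)$ is the null graph, so $q=0$. Since $b(G)=n_0(G)=0$, we have $\omega_G(V(G))=\scalebox{1.4}{$\omega$}(G)$. The claim then becomes $0>\scalebox{1.4}{$\omega$}(G)-9\gamma_{b,2}(G)$, which is just the statement that $G$ is a counterexample. The paper cites Lemma~\ref{Lem:existence_deg_1_2} here, but only the sign is needed.

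Alternatively, note that your Steps~1--2 never use $X\neq V(G)$, provided the null graph is assigned weight $0$ and $\gamma_{b,2}=0$. You should say this explicitly rather than silently applying a statement restricted to proper subsets.
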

\begin{proof} 
Since $X \subsetneq V(G)$,
$V(G-X)\neq \emptyset$. By the minimality of $G$, note that $q\ge 0$.
By definition,
\[ |\partial(X)|+3n_0(G-X) =\sum_{v\in V(G)-X} (\omega_{G-X}(v)-\omega_G(v))  = (\scalebox{1.4}{$\omega$}(G-X)-2b(G-X) ) - \omega_G(V(G)-X).
\]
By Lemma~\ref{Lem:b(G)=0}, $b(G)=0$ and $b_2(G-X)=0$. Together with $\scalebox{1.4}{$\omega$}(G)=\omega_G(X)+\omega_G(V(G)-X)$, it follows that
\begin{eqnarray}\label{eq:1}
    &&|\partial(X)|+3n_0(G-X)= \scalebox{1.4}{$\omega$}(G-X) -2b_1(G-X)+\omega_G(X)-\scalebox{1.4}{$\omega$}(G).
\end{eqnarray}
On the other hand, since $\gamma_{b,2}(G)\leq \gamma_{b,2}(G-X)+\gamma_{b,2}(G[X])$,
\begin{eqnarray*} \scalebox{1.4}{$\omega$}(G) < 9\gamma_{b,2}(G)  \leq 9\gamma_{b,2}(G-X)+9\gamma_{b,2}(G[X]) \le \scalebox{1.4}{$\omega$}(G-X)-q+9\gamma_{b,2}(G[X]),\end{eqnarray*}
and so,
$ \scalebox{1.4}{$\omega$}(G-X)-\scalebox{1.4}{$\omega$}(G)-q> -9\gamma_{b,2}(G[X])$.
Then the fact follows from \eqref{eq:1}. 
Since $G-\mathcal{C}(X)$ has neither isolated vertex nor  $C_4$-component, 
$n_0(G-\mathcal{C}(X))=b_1(G-\mathcal{C}(X))=0$.
If $\mathcal{C}(X)\subsetneq V(G)$, then
the `moreover' part holds. 
Suppose $\mathcal{C}(X)=V(G)$.
Since $b(G)=0$, $\omega_G(\mathcal{C}(X))=\scalebox{1.4}{$\omega$}(G)$.
In addition, by Lemma~\ref{Lem:existence_deg_1_2},
$\scalebox{1.4}{$\omega$}(G)-9\gamma_{b,2}(G)=-1$ or $-2$.
and so the `moreover' part also holds.
\end{proof}

\begin{Lem}\label{Lem:removal-two-edges}
For every $4$-cycle $C$, $|\partial(C)|\ge 3$, and therefore $b_1(G-v)=0$ for every vertex $v$.
\end{Lem}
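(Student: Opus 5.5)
Let $C: abcda$ be a $4$-cycle of $G$. By Lemma~\ref{Lem:b(G)=0}, $C$ is induced, so every vertex of $C$ has at most one neighbor outside $C$, and $|\partial(C)|\le 4$. The plan is to suppose $|\partial(C)|\le 2$ and derive a contradiction; the cases are $|\partial(C)|\in\{0,1,2\}$.

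\textbf{Small cut.} If $|\partial(C)|=0$, then $G=C_4$, contradicting $b(G)=0$. If $|\partial(C)|=1$, then $\partial(C)$ consists of a single cut edge $e$ with $G-e$ having a $C_4$-component, which contradicts Lemma~\ref{Lem:b_2(G)=0}(ii).

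\textbf{Two edges, same endpoint outside $C$.} Now let $|\partial(C)|=2$ and put $X=V(C)$. Then $\omega_G(X)=2\cdot 4+2\cdot 3=14$ and $\gamma_{b,2}(G[X])=1$. First suppose both edges of $\partial(C)$ go to a single vertex $y$. Since $4$-cycles are induced, the two endpoints on $C$ are opposite, say $a$ and $c$. Then $X'=\{a,b,d,y\}$ induces $N_{G[X']}[a]=X'$ together with $c$. I would instead take $X=V(C)\cup\{y\}$: setting $f(a)=2$ covers $X$, and $|\partial(X)|\le 1$. Fact~\ref{Fact:main_inequality}, applied to $\mathcal{C}(X)$, gives $|\partial(\mathcal{C}(X))| > \omega_G(\mathcal{C}(X))-9\gamma_{b,2}(G[\mathcal{C}(X)])$. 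Here $\omega_G(X)\ge 14+3=17$, while $9\cdot 2=18$. So I need to check how many components $\mathcal{C}$ adds; a pendant-type structure at $y$ should be absorbed. If this inequality is not tight enough, I would fall back on Claim~\ref{Clm:partial=1}-style reasoning, using $9\cdot 1$ for $f(a)=1$ in the right subgraph, or apply Lemma~\ref{Lem:cutedge4} to the single cut edge at $y$.

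\textbf{Two edges, distinct endpoints outside $C$.} The main case is when the two edges go to distinct vertices. Apply Fact~\ref{Fact:main_inequality} with $X=V(C)$: $|\partial(X)|+3n_0(G-X)>14-2b_1(G-X)-9=5$. Since $|\partial(X)|=2$, we get $n_0(G-X)\ge 2$ or a $C_4$-component in $G-X$. Connectivity of $G$ and $|\partial(C)|=2$ make each such component attached only through those two edges, so $G$ would have at most $4+2$ vertices or $4+4$ vertices in a very specific shape. The case $|V(G)|\le 6$ is ruled out by Lemma~\ref{Lem:b(G)=0}. In the case of two $C_4$'s joined by two edges, I would exhibit an explicit broadcast of cost $2$ (value $2$ on an endpoint of a connecting edge covers most of the graph, plus $1$ elsewhere) and compare with $\scalebox{1.4}{$\omega$}(G)=8\cdot 3+\ldots$, or directly use Lemma~\ref{Lem:cutedge4} with $S=\partial(C)$, $r_1=14-9=5$, which forces $r_2\le 3-5<0$. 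In fact Lemma~\ref{Lem:cutedge4} is likely the cleanest route whenever $n_0(G-S)=b_1(G-S)=0$: $\scalebox{1.4}{$\omega$}(C)=4\cdot 4+2=18\equiv 0$, hmm, so I would use Fact~\ref{Fact:main_inequality} as above instead. I expect this bookkeeping, namely verifying $\mathcal{C}$ does not spoil the count, to be the main obstacle.

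\textbf{Consequence.} Finally, suppose $b_1(G-v)>0$. Then some $C_4$-component $C$ of $G-v$ has $\partial_G(C)$ consisting only of edges to $v$. Since $v$ has degree at most $3$, $|\partial(C)|\le 3$; since $C$ is induced, $v$ sees at most two vertices of $C$. Thus $|\partial(C)|\le 2$, which contradicts the first part.
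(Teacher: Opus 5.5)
\textbf{There is a genuine gap: neither subcase of $|\partial(C)|=2$ is proved.}

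\textbf{The case of distinct outer endpoints.} Your argument uses $\gamma_{b,2}(G[V(C)])=1$, which is false. $G[V(C)]$ is an induced $C_4$, and a single vertex with value $1$ misses its antipodal vertex, so $\gamma_{b,2}(C_4)=2$. This is exactly why a $C_4$-component carries the extra $+2$ in the weight: $\scalebox{1.4}{$\omega$}(C_4)=18=9\cdot 2$. With the correct value, Fact~\ref{Fact:main_inequality} for $X=V(C)$ reads $2+3n_0(G-X)>14-2b_1(G-X)-18$. This is vacuous, so nothing forces isolated vertices or $C_4$-components in $G-V(C)$, and the case $u_1\neq u_2$ is left unproved. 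Your fallback via Lemma~\ref{Lem:cutedge4} with $S=\partial(C)$ is also unavailable: $C$ is itself a $C_4$-component of $G-S$, so the hypothesis $b_1(G-S)=0$ fails (and $\scalebox{1.4}{$\omega$}(C)\equiv 0$ anyway).

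The underlying obstruction is that no counting around $V(C)$ alone can work. Deleting $V(C)$ lowers the weight by only $12$, while covering $C$ on its own costs $2$, i.e.\ $18$. The rest of the broadcast must be made to cover $C$ at an extra cost of only $1$. The missing idea is the paper's reduction:
\begin{itemize}
\item Replace $C$ by a new vertex $w$ adjacent to $u_1,u_2$, giving $G'$ with $\scalebox{1.4}{$\omega$}(G')=\scalebox{1.4}{$\omega$}(G)-10$. Minimality then gives $\gamma_{b,2}(G')+1<\gamma_{b,2}(G)$.
\item Take a minimum $2$-LD broadcast $f$ of $G'$ and normalize it so that $f(w)\neq 2$.
\item Drop $w$ and place value $f(w)+1$ on a vertex $v^*$ of $C$. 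The choice of $v^*$ depends on whether $f(w)=1$, and if $f(w)=0$, on whether $u_2$ is still dominated.
\end{itemize}
This gives a $2$-LD broadcast of $G$ of cost $\gamma_{b,2}(G')+1$, a contradiction.

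\textbf{The case $u_1=u_2=y$.} This is not closed either. As you note, $X=V(C)\cup\{y\}$ gives $17$ against $18$. Applying Lemma~\ref{Lem:cutedge4} to the cut edge at $y$ gives $r_1=0$, so again there is no contradiction. The paper enlarges to $X=V(C)\cup\{y,u\}$, where $u$ is the third neighbour of $y$. Then $\scalebox{1.4}{$\omega$}(G[X])=22\equiv 4 \pmod 9$ with $1\le |\partial(X)|\le 2$, so Lemma~\ref{Lem:cutedge4} forces an isolated vertex or a $C_4$-component in $G-X$. Each is then quick to dispose of:
\begin{itemize}
\item A $C_4$-component gives a $10$-vertex graph with $\gamma_{b,2}\le 3$ and weight above $30$.
\item Otherwise, adding the unique isolated vertex gives weight $26\equiv 8$ across a single edge, contradicting Lemma~\ref{Lem:cutedge4}.
\end{itemize}
Separately, inducedness of $4$-cycles does not force $y$'s two neighbours on $C$ to be opposite; adjacent ones only create a triangle. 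This is not essential to the argument.

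Your final step, deducing $b_1(G-v)=0$ from $|\partial(C)|\ge 3$, is fine.
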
 

\begin{proof}
It is sufficient to show that $b_1(G-\{e_1,e_2\})=0$ for every two edges $e_1,e_2$. To the contrary, suppose that there are two edges $u_1v_1$ and $u_2v_2$ of $G$ such that $G-\{u_1v_1,u_2v_2\}$ contains a $C_4$-component $C$. We may assume $\{v_1,v_2\}\subset V(C)$.  Clearly $v_1\neq v_2$.
By Lemma~\ref{Lem:b_2(G)=0}(ii), 
$\partial(C)=\{u_1v_1,u_2v_2\}  $.

\noindent(Case 1) Suppose that $u_1 = u_2$. 
Then $b_1(G-u_1)>0$ and $N(V(C))=\{u_1\}$. 
Since $|V(G)|\ge 8$, there is a neighbor $u$ of $u_1$ not on $C$, and $u$ is a $2^+$-vertex.  
Let $X=V(C)\cup\{u,u_1\}$. 
Then $\scalebox{1.4}{$\omega$}(G[X])=22\equiv 4\pmod{9}$ and $1\le |\partial{(X)}|\le 2$. By Lemma~\ref{Lem:cutedge4}, $G-X$ has an isolated vertex or a $C_4$-component. 
Suppose that $G-X$ has no isolated vertex. Then $G-X$ has a $C_4$-component $C'$.
By Lemma~\ref{Lem:b_2(G)=0}(ii),   $G$ has exactly 10 vertices, and   $30<\scalebox{1.4}{$\omega$}(G)$.  In addition, $\gamma_{b,2}(G)\le3$ by assigning $2$ to $u_1$ and $1$ to some vertex of $C'$, a contradiction. 
Thus $G-X$ has an isolated vertex. 
Therefore $G-X$ has a unique isolated vertex $w$  by Lemma~\ref{Lem:b_2(G)=0}(iii).
For $Y=X\cup\{w\}$,  $|\partial{(Y)}|=1$, $\scalebox{1.4}{$\omega$}(G[Y])=26\equiv 8\pmod{9}$ and $G-Y$ has no isolated vertex. In addition, by Lemma~\ref{Lem:b_2(G)=0}(ii), $G-Y$ has no $C_4$-component, a contradiction to
Lemma~\ref{Lem:cutedge4}.

\noindent(Case 2) Suppose that $u_1\neq u_2$.
Now we consider the new graph $G'$ obtained from $G$ by deleting the vertices of $C$ and adding a new vertex $w$ and two edges $wu_1$ and $wu_2$, that is, 
    \[V(G')=V(G-C) \cup \{w\}\quad \text{and} \quad E(G')=E(G-C)\cup \{wu_1,wu_2\}.\]
    Then $G'$ is subcubic.   
Since $|V(G)|\ge 8$, $|V(G')|\ge 5$. 
In addition, $G'$ is connected and so $b_1(G')=0$.
Furthermore, since every $4$-cycle of $G$ is an induced cycle by Lemma~\ref{Lem:b(G)=0},  $b_2(G')=0$.  
We can check that  $\omega_G(C)=14$ and so  
$\scalebox{1.4}{$\omega$}(G')=\scalebox{1.4}{$\omega$}(G)-10$. 
By the minimality of $G$,  
\[9\gamma_{b,2}(G')\leq \scalebox{1.4}{$\omega$}(G')= \scalebox{1.4}{$\omega$}(G)-10 < 9\gamma_{b,2}(G)-10 < 9(\gamma_{b,2}(G)-1) \]
and so $\gamma_{b,2}(G')+1 <\gamma_{b,2}(G)$.
We will reach a contradiction by constructing a $2$-LD broadcast $f^*$ in $G$ with cost  $\gamma_{b,2}(G')+1$. 
There is a minimum $2$-LD broadcast $f$ of $G'$ such that $f(w)\neq 2$ by the following procedure: If $f(w)=2$, then we redefine the values on $N[w]$ so that $f(w):=0$ and $f(x):=\max\{f(x),1\}$ if $x\in N(w)$.
Then either $f(w)=1$ or $f(w)=0$. 
We extend $f|_{V(G')\setminus\{w\}}$ as a function on $V(G)$ by assigning 0 to each vertex of $V(C)$, and use the same notation $f$ for this extended function.

Let $Y$ be the set of vertices in $G$ that do not hear from some vertex under $f$ in $G$. Clearly, $Y\subset V(C)\cup\{u_1,u_2\}$.
We define a function $f^*:V(G)\to \mathbb{Z}_{\ge 0}$ so that
\[f^*(\nu)= \begin{cases}
				f(w)+1 &\text{if $\nu=v^*$}\\
				f(\nu) 
                &\text{otherwise,}
		\end{cases} 
		\] 
where a vertex $v^*$ is defined as follows: 
If $f(w)=1$, then let $v^*$ be a vertex on $C$ 
such that the distance from it to each of
 $u_1$ and $u_2$ is at most two in $G$ (see the first and the third figures of Figure~\ref{fig:v*}). 
Suppose that $f(w)=0$. Then we may assume that $u_1, v_1\not\in Y$. 
If $u_2\not\in Y$, then let $v^*$ be the vertex nonadjacent to $v_1$ on $C$  (see the second and the fourth figures of Figure~\ref{fig:v*}). 
If $u_2\in Y$, then we have $f(u_1)=2$ and then let $v^*=v_2$ (see the second and the third figures of Figure~\ref{fig:v*}). Then we can check that $f^*$ is a $2$-LD broadcast  in $G$ with cost $\gamma_{b,2}(G')+1$, a contradiction.        
\begin{figure}[h!]
    \centering
    \includegraphics[page=3, width=0.9\textwidth]{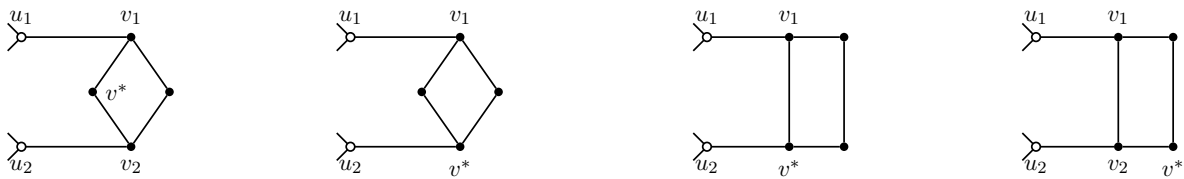}
    \caption{The choices of the vertex $v^*$}
    \label{fig:v*}
\end{figure}
\end{proof}

\subsection{Basic observations on the degree of a vertex}

This subsection aims to show the following lemma:

\begin{Lem} \label{Lem:key3.1} The following hold. 
\begin{itemize}
\item[\rm(i)]      $|V(G)|\geq 10$, and therefore $\scalebox{1.4}{$\omega$}(G)\ge 34$ and $\gamma_{b,2}(G)\ge 4$.
\item[\rm(ii)]   Every vertex on a triangle is a $(3,3,3)$-vertex.
\item[\rm(iii)] It holds that $\delta(G)=2$, and every vertex is a $(3,3)$-vertex or a $(2,3,3)$-vertex or a $(3,3,3)$-vertex.
\end{itemize}
\end{Lem}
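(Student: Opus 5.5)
The plan is to prove the three parts in order. Throughout, the main tools are the counting identity $\scalebox{1.4}{$\omega$}(G)=6|V(G)|-2|E(G)|$ (Fact~\ref{Fact:WG:even}(ii)), the congruence forced by Lemma~\ref{Lem:existence_deg_1_2}, and the local inequality of Fact~\ref{Fact:main_inequality} applied to small sets $X$ around a low-degree vertex.

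For (i), Lemma~\ref{Lem:b(G)=0} gives $|V(G)|\ge 8$, so it remains to exclude $|V(G)|\in\{8,9\}$. In those cases $\scalebox{1.4}{$\omega$}(G)\ge 3|V(G)|\ge 24$. Then Lemma~\ref{Lem:existence_deg_1_2} forces $9\gamma_{b,2}(G)\in\{\scalebox{1.4}{$\omega$}(G)+1,\scalebox{1.4}{$\omega$}(G)+2\}$, hence $\gamma_{b,2}(G)\ge 3$; if $\gamma_{b,2}(G)=3$ we also get $\scalebox{1.4}{$\omega$}(G)\in\{25,26\}$, i.e.\ few $2^-$-vertices. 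To contradict this I would exhibit a cheap broadcast. Take a $3$-vertex $u$ and assign $f(u)=2$, which covers $N_2[u]$. In a connected subcubic graph on at most $9$ vertices with $\Delta=3$, at most a few vertices lie outside $N_2[u]$. Choosing $u$ of minimum eccentricity, I expect the leftover set to be coverable by one extra $1$ unless $\gamma_{b,2}(G)\le 3$ already contradicts the weight count. This should follow the style of the $|V(G)|=7$ case of Lemma~\ref{Lem:b(G)=0}, with a short case analysis on the components of $G-N_2[u]$. Once $|V(G)|\ge 10$, we get $\scalebox{1.4}{$\omega$}(G)\ge 30$. Moreover $\scalebox{1.4}{$\omega$}(G)$ is even and congruent to $7$ or $8$ modulo $9$, and the least such value that is at least $30$ is $34$. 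Finally $9\gamma_{b,2}(G)\ge 35$ gives $\gamma_{b,2}(G)\ge 4$.

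For (ii) and (iii), I would first exclude $1$-vertices. Let $v$ be a $1$-vertex with neighbor $u$, and apply Fact~\ref{Fact:main_inequality} (the ``moreover'' form) to $X'=\mathcal{C}(N[u])$, using $\gamma_{b,2}(G[X'])$ bounded by $1$ or $2$ (put $1$ on $u$, plus possibly a $2$ on $u$ to absorb isolated vertices hanging at distance two). Here $\omega_G(N[u])$ is large because $v$ contributes $5$. I also need $|\partial(X')|$ small, which uses Lemma~\ref{Lem:b_2(G)=0}(iii) and Lemma~\ref{Lem:removal-two-edges} to control isolated vertices and $C_4$-components of $G-N[u]$. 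Together these should make $|\partial(X')|\le\omega_G(X')-9\gamma_{b,2}(G[X'])$, a contradiction. The same template with $X=N[x]\cup N[y]$, or $X=\mathcal{C}(N[x])$, handles two adjacent $2$-vertices $x,y$, and a $3$-vertex with two $2$-neighbors $a,b$ (taking $X=N[w]$ with $f(w)=1$, where the weight $3+4+4+\cdots$ beats the boundary). For a triangle $xyz$ containing a $2^-$-vertex or a vertex with a $2$-neighbor, take $X$ to be the triangle together with the relevant neighbors and assign $1$ to one triangle vertex. The triangle saves boundary edges, so the same inequality fails, which gives (ii). Once $1$-vertices are gone, $\delta(G)=2$ follows from Lemma~\ref{Lem:existence_deg_1_2}, and the exclusions above give exactly the three allowed vertex types in (iii).

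The main obstacle I expect is the bookkeeping in Fact~\ref{Fact:main_inequality}: deleting $X$ may create isolated vertices or $C_4$-components, each costing $3$ or $2$ in the inequality. For each configuration I would pass to $\mathcal{C}(X)$ and check that the enlarged set is still dominated at the same cost, e.g.\ by upgrading a $1$ to a $2$ at the center, since newly isolated vertices lie within distance two. If a configuration resists this, I would fall back on Lemma~\ref{Lem:cutedge4} with residues modulo $9$ when $|\partial(X)|\le 2$.
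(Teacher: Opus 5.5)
There are genuine gaps in both (i) and (iii).

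\textbf{Part (i).} For $|V(G)|=8$, $\scalebox{1.4}{$\omega$}(G)=48-2|E(G)|$ must be even and congruent to $7$ or $8$ modulo $9$, so it equals $26$ or $34$. When it is $26$ (for example, six $3$-vertices and two $2$-vertices), Lemma~\ref{Lem:existence_deg_1_2} gives $\gamma_{b,2}(G)=3$. Since $27>26$, this is consistent with $G$ being a counterexample, so only a broadcast of cost $2$ gives a contradiction. Your construction, a $2$ on a $3$-vertex plus one extra $1$, has cost $3$. A cost-$3$ broadcast contradicts the weight count only when $\scalebox{1.4}{$\omega$}(G)=34$.

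Finding cost-$2$ broadcasts needs structure that you give up by proving (i) first. The paper proves (i) only after $\delta(G)=2$ (Lemma~\ref{Lem:no_pendant}), Lemma~\ref{Lem:no-(2,2)} and Lemma~\ref{0Lem:no-triagnle-vertex-of-degree2-new}. These force exactly six $3$-vertices and two $2$-vertices, and suppressing the $2$-vertices gives a simple cubic graph on six vertices, i.e.\ $K_{3,3}$ or the prism. The resulting graphs are then checked by hand (Figure~\ref{fig:6vertices:all}). The same structure disposes of $|V(G)|=9$ by counting alone, whereas your treatment of that case is only ``expected''.

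\textbf{Parts (ii) and (iii).} Your template breaks in two places.

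First, absorbing an isolated vertex $w$ of $G-X$ by upgrading the center from $1$ to $2$ gains nothing. Under this move $\omega_G$ rises by $6-d(w)$, $|\partial|$ drops by $d(w)$, the term $3n_0(G-X)$ disappears, and $9\gamma_{b,2}(G[X])$ rises by $9$. The inequality of Fact~\ref{Fact:main_inequality} is therefore literally unchanged. Isolated vertices of $G-N[u]$ lie at distance two from $u$, so this is always your situation when you start with a $1$ at $u$. For example, take a $(1,3,3)$-vertex $u$ whose two $3$-neighbors share a $2$-neighbor $w$. Then $N[u]\cup\{w\}$ has weight $18$, cost $2$ and boundary $2$, and $2>0$ is no contradiction. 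This is exactly the configuration the paper works through in Lemma~\ref{Lem:no_pendant}, via Claims~\ref{clm:partial_rad_2} and~\ref{Clm:133} and then the sets $X_1\subset X_2$. The analogous isolated-vertex case for $2$-neighbors and triangles needs the set $N_2[v]$ in Lemma~\ref{0Lem:no-triagnle-vertex-of-degree2-new}.

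Second, and more seriously, the sets you name for two adjacent $2$-vertices $x,y$ give no contradiction, even when $G-X$ has no isolated vertex or $C_4$-component. Let $a,b$ be the other neighbors of $x,y$; they are $3$-vertices by Lemma~\ref{Lem:no-(2,2)}. The set $N[x]=\{a,x,y\}$ has weight $11$, boundary $3$ and cost $1$, so the inequality only reads $3>2$. The set $N[x]\cup N[y]=\{a,x,y,b\}$ has weight $14$ but cost $2$. Deletion falls short by exactly two units of weight.

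The paper recovers these two units with a reduction that is not a subgraph: $G'=G-\{v_1,v_2,u_1\}+w_1u_2$. After checking $b(G')=0$, which itself uses (ii) and $|V(G)|\ge 10$, this gives $\scalebox{1.4}{$\omega$}(G')=\scalebox{1.4}{$\omega$}(G)-10$, hence $\gamma_{b,2}(G')\le\gamma_{b,2}(G)-2$. A minimum broadcast of $G'$ then extends to $G$ by a single $1$, placed at $v_1$, at the undominated end $y'$ of $w_1u_2$, or at the neighbor of $y'$ in $\{u_1,v_2\}$. This edge-adding reduction is the idea missing from your plan.
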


In what follows, we collect the necessary observations to prove the above lemma. 
 
\begin{Lem}\label{Lem:no-(2,2)} 
There is no path $P:uvw$ in $G$ 
whose vertex degrees in $G$ sum to at most six.
Therefore, no $(2^-,2^-)$-, $(1,3^-)$-, or $(1,2^-,3^-)$-vertex exists.
\end{Lem}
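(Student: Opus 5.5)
Suppose, for contradiction, that $P:uvw$ is a path with $d(u)+d(v)+d(w)\le 6$. I would apply Fact~\ref{Fact:main_inequality} to $X=\{u,v,w\}$, or rather to $\mathcal{C}(X)$. Since $v$ is adjacent to both $u$ and $w$, assigning $1$ to $v$ dominates $G[X]$, so $\gamma_{b,2}(G[X])=1$. The weight is
\[
\omega_G(X)=18-\bigl(d(u)+d(v)+d(w)\bigr)\ge 12.
\]
The boundary satisfies $|\partial(X)|\le d(u)+d(v)+d(w)-2e(G[X])\le 6-4=2$, since $G[X]$ has at least the two edges $uv$ and $vw$. If $uw\in E(G)$, then $|\partial(X)|\le 0$, which is impossible because $G$ is connected with $|V(G)|\ge 8$. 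So $|\partial(X)|\le 2$.

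Now I apply the first inequality of Fact~\ref{Fact:main_inequality}:
\[
|\partial(X)|+3n_0(G-X) > \omega_G(X)-2b_1(G-X)-9 \ge 3-2b_1(G-X).
\]
Here $b_1(G-X)=0$: a $C_4$-component of $G-X$ would be attached to $X$ by at most $|\partial(X)|\le 2$ edges, contradicting Lemma~\ref{Lem:removal-two-edges}. So $|\partial(X)|+3n_0(G-X)>3$. Since $|\partial(X)|\le 2$, this forces $n_0(G-X)\ge 1$, that is, some vertex $z$ of $G-X$ has all its neighbours in $X$.

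Next I handle that isolated vertex. Any such $z$ uses at least one edge of $\partial(X)$, so there are at most two of them, and $|\partial(X)|\le 2$ forces $G$ to be tiny. Let $Y=\mathcal{C}(X)=X\cup\{\text{isolated vertices}\}$; the $C_4$-part is empty by the argument above. Every vertex of $Y$ is within distance $2$ of $v$, because the isolated vertices are neighbours of $u$ or $w$. Hence assigning $2$ to $v$ gives $\gamma_{b,2}(G[Y])\le 2$, while $|\partial(Y)|\le |\partial(X)|$ minus the edges absorbed into $Y$. Applying the ``moreover'' part of Fact~\ref{Fact:main_inequality} to $Y$ gives
\[
|\partial(Y)| > \omega_G(Y)-18.
\]
With one isolated vertex $z$ of degree $d\le 2$, we get $\omega_G(Y)\ge 12+(6-d)$ and $|\partial(Y)|\le 2-d$. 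This yields $2-d>-d$, which is true, so the direct count fails and a finer argument is needed. Instead, when $Y$ already has $\gamma_{b,2}(G[Y])=1$ (for example, if $z$ is adjacent to $v$'s neighbour and a cost-$1$ broadcast covers all of $Y$) or $|\partial(Y)|=0$, I get a contradiction via connectivity and $|V(G)|\ge 8$. Otherwise $|\partial(Y)|\le 1$. In that case I use Lemma~\ref{Lem:cutedge4} on the cut $\partial(Y)$, computing $\omega(G[Y])\bmod 9$ and checking $r_1+r_2\le 1$, or I use Claim~\ref{Clm:partial=1}-style reasoning. Since $G$ has at least $8$ vertices and $Y$ has at most $5$, the remaining graph hangs off a single cut edge.

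The main obstacle is this last case analysis, where $z$ exists and the weights are tight. There I would try a more careful choice: pick $X$ to be $N_G[v]$ together with pendant neighbours, so that a cost-$1$ broadcast from $v$ still suffices, and then compare residues mod $9$ via Lemma~\ref{Lem:cutedge4}.

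The ``therefore'' part then follows by exhibiting the forbidden path in each case:
\begin{itemize}
\item a $(2^-,2^-)$-vertex $v$ is the middle of a path with degree sum at most $6$;
\item a $(1,3^-)$-vertex $v$ of degree $2$ gives the path with degree sum $1+2+3=6$;
\item a $(1,2^-,3^-)$-vertex $v$ gives the path $u v w$ with $u$ the $1$-neighbour and $w$ the $2^-$-neighbour, with degree sum at most $1+3+2=6$.
\end{itemize}
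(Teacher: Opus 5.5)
Your opening step is sound and is essentially the paper's. With $X=V(P)$ you correctly get $|\partial(X)|\le 2$ and $b_1(G-X)=0$ (by Lemma~\ref{Lem:removal-two-edges}), and hence an isolated vertex $z$ of $G-X$; the paper gets this from Lemma~\ref{Lem:cutedge4}, using $\scalebox{1.4}{$\omega$}(P)=14\equiv 5\pmod 9$. Your ``therefore'' part is also fine.

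The proof, however, stops where the remaining work is, and the tools you name do not finish it. Since $|V(G)|\ge 8$, $z$ is a leaf and $|\partial(X)|=2$, so $Y=X\cup\{z\}$ induces a $4$-vertex tree joined to the rest by one edge. The star case and the $(2,2,2)$ case die quickly (Lemma~\ref{Lem:b_2(G)=0}(iii), and Claim~\ref{Clm:partial=1} applied to $\{z,u,v\}$, respectively). The patterns $(1,2,3)$ and $(1,3,2)$, however, produce a path $s_1s_2s_3s_4$ with $d(s_1)=d(s_4)=1$, $d(s_3)=2$, $d(s_2)=3$, attached only by an edge $s_2y$. In this configuration none of your tools gives a contradiction:
\begin{itemize}
\item Fact~\ref{Fact:main_inequality} gives nothing, as you noticed.
\item Claim~\ref{Clm:partial=1} does not apply: no vertex dominates $Y$, and the dominated triples $\{s_1,s_2,s_3\}$ and $\{s_2,s_3,s_4\}$ each have two outside neighbours.
\item Lemma~\ref{Lem:cutedge4} on the cut $\{s_2y\}$ is not violated. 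We have $\scalebox{1.4}{$\omega$}(G[Y])=18\equiv 0$, while $\scalebox{1.4}{$\omega$}(G-Y)=\scalebox{1.4}{$\omega$}(G)-16\equiv 0$ or $1\pmod 9$ by Lemma~\ref{Lem:existence_deg_1_2}, so $r_1+r_2\le 1$ holds.
\end{itemize}
Your alternative using $N[v]$ plus pendant vertices is only sketched. As written, the proof is incomplete.

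The missing idea is to push the cut one edge further, as the paper does. Add the outer endpoint $y$ of the unique edge of $\partial(Y)$. Any isolated vertex of $G-(Y\cup\{y\})$ must be a leaf at $y$, because every edge at $Y$ other than $s_2y$ lies inside $Y$. There is at most one such leaf since $|V(G)|\ge 8$, and you add it too. The resulting $G_1$ is a tree on $5$ or $6$ vertices, so $\scalebox{1.4}{$\omega$}(G_1)\in\{22,26\}$, i.e.\ $\equiv 4$ or $8\pmod 9$. Meanwhile $R=\partial(V(G_1))$ has at most two edges, all at $y$, and $G-R$ has no isolated vertex. Since $r_1\ge 4>3\ge 2|R|-1$, Lemma~\ref{Lem:cutedge4} forces a $C_4$-component of $G-R$. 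That component is a $C_4$-component of $G-y$, contradicting Lemma~\ref{Lem:removal-two-edges}. This argument handles every configuration at once, so you do not need the star/path case split.
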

\begin{proof}
To the contrary, suppose that there is a path $P:uvw$ in $G$ such that 
 $d(u)+d(v)+d(w)\le 6$.
By the degree condition, $|\partial(P)|\le 2$.
By Lemma~\ref{Lem:removal-two-edges} and from the fact that $|V(G)|\ge 8$, it follows that  $b_1(G-V(P))=0$ 
and $P$ is an induced path in $G$. 
Thus $\scalebox{1.4}{$\omega$}(P)=14\equiv 5\pmod{9}$ and so, by Lemma~\ref{Lem:cutedge4}, $G-V(P)$ has an isolated vertex $x$.
Since $|V(G)|\ge 8$,  
$x$ is a unique isolated vertex in $G-V(P)$, $d(x)=1$, and there is a unique vertex $y$ that is adjacent to $\mathcal{C}(P)$ with $y\not\in \mathcal{C}(P)$. 
For simplicity, let 
$R=\partial(\mathcal{C}(P)\cup \{y\}).$ 
Then $|R|\le 2$. 
Let $G_1$ be the connected component of $G-R$ containing $P$.
Since $|V(G)|\ge 8$, if $G-R$ has an isolated vertex, then it has unique isolated vertex $z$. In this case,  we redefine $G_1$ by adding $z$ to $G_1$, and then redefine $R=\partial(G_1)$.  It also holds that $|R|\le 2$.
Then $G_1$ has $5$
 or $6$ vertices, and so $\scalebox{1.4}{$\omega$}(G_1)=22$ or $26$ by Fact~\ref{Fact:WG:even}(ii).
Then $\scalebox{1.4}{$\omega$}(G_1)\equiv 4$ or $8\pmod{9}$, and so $b_1(G-R)>0$
by Lemma~\ref{Lem:cutedge4}. Thus $G-y$ contains a $C_4$-component, which contradicts Lemma~\ref{Lem:removal-two-edges}
\end{proof}

\begin{Lem}\label{Lem:partial=2}
  If a $3$-vertex $v$ has a $2^-$-neighbor, then $G-N[v]$ has an isolated vertex  or $|\partial(N[v])|\ge 5$.
\end{Lem}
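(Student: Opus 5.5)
Suppose, to the contrary, that $v$ is a $3$-vertex with a $2^-$-neighbor such that $G-N[v]$ has no isolated vertex and $|\partial(N[v])|\le 4$. Put $X=N[v]$. The plan is to apply Lemma~\ref{Lem:cutedge4} to the edge cut $S=\partial(X)$. This separates $G_1=G[X]$ from $G_2=G-X$, and we compare the residues of their weights.

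First, the hypotheses of Lemma~\ref{Lem:cutedge4} hold. By assumption $n_0(G-X)=0$. For $C_4$-components, note that $X$ is the neighborhood of a single vertex. Suppose $G-X$ had a $C_4$-component $C'$. Since $|\partial(X)|\le 4$ and $|\partial(C')|\ge 3$ by Lemma~\ref{Lem:removal-two-edges}, $C'$ absorbs at least three of the at most four edges leaving $X$. Then $G-X$ consists essentially of $C'$ together with a few further vertices, so $|V(G)|$ is small. On the other hand, Lemma~\ref{Lem:key3.1} is not yet available, but $|V(G)|\ge 8$ is. In this situation I would exhibit directly a broadcast with $f(v)=1$ (or $f(v)=2$) and a single value $1$ or $2$ on a vertex of $C'$ covering everything. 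This gives $\gamma_{b,2}(G)\le 3$, while $\scalebox{1.4}{$\omega$}(G)\ge 3|V(G)|+(\text{extra from }2^-\text{-vertices})\ge 27$, a contradiction. So $b_1(G-X)=0$.

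Next comes the residue computation. Since $v$ is a $3$-vertex with a $2^-$-neighbor, and by Lemma~\ref{Lem:no-(2,2)} no $(1,2^-,3^-)$-vertex exists, the degrees of the three neighbors are constrained. Also, $G[X]$ has either no edges among $N(v)$ or one edge; two such edges would create a non-induced $4$-cycle or a triangle structure, which I rule out using Lemma~\ref{Lem:b(G)=0}. Writing $\scalebox{1.4}{$\omega$}(G_1)=\omega_G(X)+|\partial(X)|$ via Fact~\ref{Fact:WG:even}(ii) on $G[X]$ (valid since $G[X]$ is connected and not bad), we have $\omega_G(X)=3+\sum_{u\in N(v)}(6-d(u))$. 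This gives $\scalebox{1.4}{$\omega$}(G_1)=18+2\cdot(\text{number of non-}v\text{ edges inside }X)^{-}\dots$ More simply, $\scalebox{1.4}{$\omega$}(G_1)=6\cdot 4-2|E(G[X])|=24-2\cdot(3+e_X)$, where $e_X\in\{0,1\}$ counts edges inside $N(v)$. Hence $\scalebox{1.4}{$\omega$}(G_1)\in\{18,16\}$, so $r_1\in\{0,7\}$. Since $\gamma_{b,2}(G[X])=1$, the case $r_1=0$ is not helpful on its own. Therefore I would use Fact~\ref{Fact:main_inequality} instead, with $\gamma_{b,2}(G[X])=1$ and $q\ge 0$. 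This gives $|\partial(X)|>\omega_G(X)-9$. The condition of a $2^-$-neighbor then gives $\omega_G(X)\ge 3+4+3+3=13$, so $|\partial(X)|\ge 5$ whenever $\omega_G(X)\ge 13$. Indeed $|\partial(X)|>\omega_G(X)-9\ge 4$. This is exactly the claim, with the edge count handled consistently: $\omega_G(X)=3+\sum(6-d(u))$, and the $2^-$-neighbor contributes at least $4$.

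The main obstacle is the $C_4$-component case (and cleanly verifying $b_1(G-X)=0$, which Fact~\ref{Fact:main_inequality} needs since otherwise the term $-2b_1$ weakens the bound by $2$). I would handle it by bounding $|V(G)|\le 4+4+\text{few}$ and giving an explicit cost-$3$ broadcast. Alternatively, if $b_1(G-X)=1$, I would include $C'$ into $X$, replacing $X$ by $\mathcal{C}(X)$ and using the `moreover' part of Fact~\ref{Fact:main_inequality}. This requires checking $\gamma_{b,2}(G[\mathcal{C}(X)])\le 2$ against $\omega_G(\mathcal{C}(X))\ge 13+14-2\cdot 3$, which forces $|\partial(\mathcal{C}(X))|$ negative, a contradiction.
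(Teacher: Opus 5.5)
Your core step is the paper's: with $X=N[v]$, $\gamma_{b,2}(G[X])\le 1$ and $\omega_G(X)\ge 13$, Fact~\ref{Fact:main_inequality} gives $4\ge|\partial(X)|>4-2b_1(G-X)$. So everything hinges on the case where $G-X$ has a $C_4$-component $C'$. The residue computation via Lemma~\ref{Lem:cutedge4} is a detour you can drop.

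The $C_4$-component case is where your proposal has a genuine gap. Your primary argument there is wrong. From $\partial(C')\subseteq\partial(X)$ and $|\partial(C')|\ge 3$ (Lemma~\ref{Lem:removal-two-edges}) you only learn that $|\partial(X\cup V(C'))|\le 1$, i.e.\ $X\cup V(C')$ is joined to the rest of $G$ by at most one edge. The rest of $G$ can be arbitrarily large, so $|V(G)|$ is not small and there is no cost-$3$ broadcast of all of $G$ to exhibit.

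Your fallback (pass to $\mathcal{C}(X)$ and use the `moreover' part) is the right route and is what the paper does. As written, however, it is only a sketch, its key step is unchecked, and its arithmetic is off. You need three things.
\begin{enumerate}
\item[(a)] $\mathcal{C}(X)=X\cup V(C')$. There are no isolated vertices by assumption, and a second $C_4$-component would need at least $6$ edges of $\partial(X)$. Hence $|\partial(\mathcal{C}(X))|\le 1$.
\item[(b)] An actual proof that $\gamma_{b,2}(G[\mathcal{C}(X)])\le 2$. If some $v_i\in N(v)$ has two neighbours on $C'$, give $v_i$ value $2$; it reaches $N[v]$ and all of $C'$. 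Otherwise each $v_i$ has exactly one neighbour on $C'$. These are three distinct vertices of $C'$, since each vertex of $C'$ has at most one neighbour outside $C'$. Value $2$ on the middle one of these three then reaches all of $C'$ and all of $N[v]$.
\item[(c)] The correct weight. We have $\omega_G(V(C'))=16-|\partial(C')|\ge 12$, so $\omega_G(\mathcal{C}(X))\ge 25$.
\end{enumerate}
Then the `moreover' part of Fact~\ref{Fact:main_inequality}, with $q\ge 0$, gives $|\partial(\mathcal{C}(X))|>25-18=7$. This contradicts $|\partial(\mathcal{C}(X))|\le 1$. The contradiction comes from this upper bound, not from anything being forced negative. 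With your figure of $21$ you would only get $|\partial(\mathcal{C}(X))|\ge 4$, which likewise needs (a) to conclude.
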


\begin{proof} Suppose that a $3$-vertex $v$ has a $2^-$-neighbor, $G-N[v]$ has no isolated vertex, and $|\partial(N[v])|\le 4$.
We let $X=N[v]$.
Note that $\omega_G(X)\ge 13$ since $X$ has a $2^-$-vertex and $|X|= 4$.
Then it follows from Fact~\ref{Fact:main_inequality}, $4\ge |\partial(X)|>13-2b_1(G-X)-9$ and so $b_1(G-X)>0$, that is, $G-X$ has a $C_4$-component $C$.
By Lemma~\ref{Lem:removal-two-edges}, $|\partial(C)|\geq 3$ , at least two vertices of $C$ belong to $N(X)$, and $C$ is the only $C_4$-component in $G-X$.
Hence $\mathcal{C}(X)=X\cup V(C)$.
In addition,  $G[\mathcal{C}(X)]$ has a spanning subgraph isomorphic to one of the graphs in Figure~\ref{fig:Lem:partial=2}.
Then $\gamma_{b,2}(\mathcal{C}(X))\le 2$, 
$|\partial(\mathcal{C}(X))|\leq 1$ and $\omega_G(\mathcal{C}(X))\ge 24$, which contradicts Fact~\ref{Fact:main_inequality}.
\begin{figure}[h!]\centering
\includegraphics[page=4, width=0.45\textwidth]{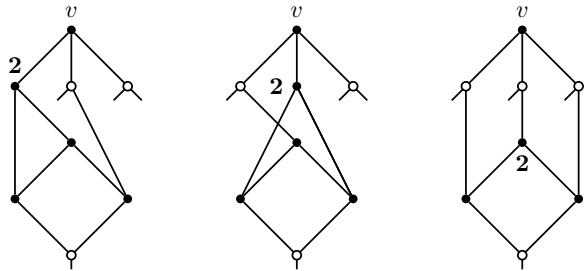}
\caption{Subgraphs mentioned in Lemma~\ref{Lem:partial=2}} \label{fig:Lem:partial=2}
\end{figure}
\end{proof}

\begin{Lem}\label{Lem:no_pendant} It holds that $\delta(G)=2$.
\end{Lem}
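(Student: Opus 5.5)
We already know $n_0(G)=0$, and Lemma~\ref{Lem:existence_deg_1_2} guarantees a $2^-$-vertex. So it suffices to show that $G$ has no $1$-vertex. Suppose, to the contrary, that $x$ is a $1$-vertex and let $v$ be its neighbor. Since $G$ is connected with $|V(G)|\ge 8$, $v$ is not a $1$-vertex. By Lemma~\ref{Lem:no-(2,2)}, $v$ is not a $(1,3^-)$-vertex, so $v$ is a $3$-vertex. By the same lemma, $v$ is not a $(1,2^-,3^-)$-vertex. Hence the other two neighbors $u,w$ of $v$ are both $3$-vertices, and $x$ is the unique $1$-neighbor of $v$ (which is also forced by Lemma~\ref{Lem:b_2(G)=0}(iii)).

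First I apply Lemma~\ref{Lem:partial=2} to $v$, which has the $2^-$-neighbor $x$. Since $x$ has no neighbor outside $N[v]$ and $u,w$ each send at most two edges out of $N[v]$, we get $|\partial(N[v])|\le 4$. Therefore $G-N[v]$ has an isolated vertex $z$. Its neighbors lie in $\{u,w\}$, so $d(z)\le 2$. By Lemma~\ref{Lem:no-(2,2)}, every neighbor of $z$ is a $3$-vertex; this holds automatically here, since $u$ and $w$ are $3$-vertices. If $z$ were a $1$-vertex, say adjacent to $u$, I would apply the same reasoning to $u$ in place of $v$. This constrains the structure further, and any second isolated vertex is excluded by Lemma~\ref{Lem:b_2(G)=0}(iii) and counting.

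Next I aim for a contradiction with Fact~\ref{Fact:main_inequality}, applied to $Y=N_{2}[v]$, or rather to $\mathcal{C}(Y)$. Assigning $2$ to $v$ dominates all of $N_2[v]$, so $\gamma_{b,2}(G[Y])\le 1$ as a broadcast (cost $2$). Also any isolated vertex or $C_4$-component of $G-Y$ that lies within distance $2$ of $v$ is dominated at no extra cost, and otherwise costs at most $1$ more. The key identity is
\[
\omega_G(Y)-|\partial(Y)| = 6|Y|-2|E(G[Y])|-2|\partial(Y)| = \sum_{y\in Y}\bigl(6-d(y)-e(y)\bigr),
\]
where $e(y)$ is the number of edges from $y$ leaving $Y$. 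The vertices $x,v,u,w$ contribute $5+3+3+3=14$ to this sum, and each further vertex of $Y$ contributes at least $1$, since $e(y)\le d(y)-1$. So it suffices to show that $Y$ has at least four vertices besides $x,v,u,w$ (for instance $z$ and three others), or to gain the missing weight from isolated vertices and $C_4$-components absorbed into $\mathcal{C}(Y)$. That would give $\omega_G(\mathcal{C}(Y))-18\ge |\partial(\mathcal{C}(Y))|$, contradicting the strict inequality in Fact~\ref{Fact:main_inequality}, since $q\ge 0$.

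The main obstacle is the case where $N_2[v]$ is small. This happens when $u$ and $w$ share the neighbor $z$, or are adjacent, or have common outer neighbors. In these cases $|Y|$ is $6$ or $7$ and the count above falls short. For each such configuration I plan to use that $\partial(Y)$ is then tiny (at most $2$ or $3$ edges) and compute $\scalebox{1.4}{$\omega$}(G[Y]) \bmod 9$. I then invoke Lemma~\ref{Lem:cutedge4} to force an isolated vertex or a $C_4$-component in $G-Y$. Absorbing it into $Y$ and repeating, as in the proofs of Lemma~\ref{Lem:removal-two-edges} and Lemma~\ref{Lem:no-(2,2)}, should either reach the weight threshold or produce a $C_4$-component in $G$ minus a single vertex, which contradicts Lemma~\ref{Lem:removal-two-edges}. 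I expect this final bookkeeping of small configurations to be the tedious part.
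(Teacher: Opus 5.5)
Your proposal has a genuine gap in two places. First, the case $d(z)=1$, where $v$ is adjacent to a second $(1,3,3)$-vertex $u$, is not handled. ``Applying the same reasoning to $u$'' yields nothing: the leaf $x$ of $v$ is already an isolated vertex of $G-N[u]$, so Lemma~\ref{Lem:partial=2} at $u$ is satisfied trivially. Iterating only walks along a path of $(1,3,3)$-vertices and never terminates. The paper needs a separate argument here (Claim~\ref{Clm:133}). It takes a longest path $v_1\cdots v_t$ of $(1,3,3)$-vertices and lets $X$ consist of the path and its leaves. Then $|\partial(X)|=2$, $\omega_G(X)=8t$, and $\gamma_{b,2}(G[X])\le 2q+r$ for $t=3q+r$. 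Fact~\ref{Fact:main_inequality} forces $t=2$, and $t=2$ is then excluded by analysing $N[v_1]\cup N[v_2]$.

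Second, your accounting for $\mathcal{C}(Y)$ with $Y=N_2[v]$ runs the wrong way. Every vertex outside $N_2[v]$ is at distance at least $3$ from $v$, so no isolated vertex or $C_4$-component of $G-Y$ is dominated for free by the value $2$ at $v$. Absorbing an isolated vertex adds $6$ to $\omega_G-|\partial|$ but $9$ to $9\gamma_{b,2}$. A $C_4$-component adds $16$ but costs $18$, not ``at most $1$ more''. So what you actually need is $\omega_G(Y)-|\partial(Y)|\ge 18+3n_0(G-Y)+2b_1(G-Y)$, which is just Fact~\ref{Fact:main_inequality} again; absorption can never ``gain the missing weight''.

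This fails in the generic case. Suppose $z$ is a $2$-vertex adjacent to $u,w$, and the third neighbors $y_u\neq y_w$ of $u,w$ are nonadjacent $3$-vertices. Then $|\partial(Y)|=4$ and $\omega_G(Y)-|\partial(Y)|=20$. A single isolated vertex of $G-Y$ (say a $2$-vertex adjacent to $y_u$ and $y_w$) makes the required bound $20\ge 21$. Lemma~\ref{Lem:cutedge4} no longer applies, and absorbing the vertex gives $26<27$.

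The missing idea is to use an asymmetric set and re-centre the broadcast. The set $X_1=N[v]\cup\{z,y_u\}$ has $\omega_G(X_1)\ge 21$ and $|\partial(X_1)|\le 3$. Claim~\ref{clm:partial_rad_2} then gives an isolated vertex $w'$ of $G-X_1$. That claim is also where $C_4$-components are ruled out, by a separate argument when $V(G)=X_1\cup V(C)$, not by Lemma~\ref{Lem:removal-two-edges} alone. By Lemma~\ref{Lem:no-(2,2)}, $w'$ must be adjacent to $y_u$. Now $X_2=X_1\cup\{w'\}$ is still dominated by assigning $2$ to $u$, has $|\partial(X_2)|\le 2$, and leaves no isolated vertex or $C_4$-component outside. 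This contradicts Fact~\ref{Fact:main_inequality}.
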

\begin{proof}
We first prove the following claim.

\begin{Clm}\label{clm:partial_rad_2}
Let $v$ be a vertex in $G$ with a $1$-neighbor $u$ and
$X$ be a set of vertices containing $u$ and $v$ such that every vertex in $X$ has distance at most two from $v$ in $G[X]$. If $\omega_G(X)\ge 21$,  then $|\partial(X)|\ge 4$ or $G-X$ has an isolated vertex.
\end{Clm} 
\begin{proof}
Suppose, to the contrary, that $\omega_G(X)\ge 21$, $|\partial(X)|\le 3$ and $G-X$ has no isolated vertex. 
If $b_1(G-X)=0$, then Fact~\ref{Fact:main_inequality} implies that 
$3\ge |\partial(X)|>21-18$, a contradiction. 
Thus $b_1(G-X)>0$ and so $G-X$ has a $C_4$-component $C$.
By Lemma~\ref{Lem:removal-two-edges}, $C$ is incident to all three edges of $\partial(X)$, $|\partial(X)|=3$ and so $C$ has one $2$-vertex.
Then $V(G)=X\cup V(C)$ and $\gamma_{b,2}(G)\le 4$.
Since $\scalebox{1.4}{$\omega$}(G)\ge 21+13=34$, $\gamma_{b,2}(G)=4$.
Note that $\scalebox{1.4}{$\omega$}(G)$ is even by Fact~\ref{Fact:WG:even}(i).
By Lemma~\ref{Lem:b(G)=0},  $\scalebox{1.4}{$\omega$}(G)=\omega_G(V(G))=34$, and so $|V(G)|\le 10$.
If $V(C)\cap N_2[v]\neq \emptyset$,
then $\gamma_{b,2}(G)\leq 3$ by assigning $2$ to $v$ and $1$ to some vertex in $C$, a contradiction.
Thus $V(C)\cap N_2[v]=\emptyset$, and therefore, $X=N_2[v]$.
By Lemma~\ref{Lem:removal-two-edges}, $|X\setminus N[v]|\ge 2$, and therefore 
$|V(G)|=10$
and $|X\setminus N[v]|=2$. Take a vertex $v^*$ on $C$ so that $v^*$ has distance at most two from $v^*$ to each of two vertices in $X\setminus N[v]$.
Therefore $\gamma_{b,2}(G)\leq 3$ by assigning $2$  and $1$ to $v^*$ and $v$, respectively, a contradiction.
\end{proof}

Now suppose to the contrary that $\delta(G)=1$.
By Lemma~\ref{Lem:no-(2,2)}, there is a $(1,3,3)$-vertex in $G$.

\begin{Clm}\label{Clm:133} 
There is no edge between two $(1,3,3)$-vertices.  
\end{Clm}
\begin{proof}
Let $V_3$ be the set of $(1,3,3)$-vertices. 
Take a longest path $v_1\ldots v_t$ of $G[V_3]$. Let $u_i$ be the $1$-neighbor of $v_i$. Let $X=\{v_1,\ldots,v_t\}\cup \{u_1,\ldots,u_t\}$. 
Suppose to the contrary that $t\ge 2$. 
Let $w_1$ and $w_t$ be the neighbors of $v_1$ and $v_t$ not in $X$, respectively.
Then $|\partial(X)|=2$ and so $b_1(G-X)=0$ by Lemma~\ref{Lem:removal-two-edges}.
We can check $\omega_G(X)=8t$.
Let $t=3q+r$ for  some integers $q$ and $r$ with $r\in\{0,1,2\}$. 
Then $\gamma_{b,2}(G[X])\le 2q + r$ since we define a $2$-LD broadcast $f$ of $G[X]$ by assigning $2$ to $v_{3i+2}$  for each nonnegative integer $i$ such that $3i+2\leq t$ and by assigning $1$ to $v_t$ only when $r=1$. 
Since $v_1$ and $v_t$ are $(1,3,3)$-vertices, $G-X$ has no isolated vertex.
Then by Fact~\ref{Fact:main_inequality}, $2\ge |\partial(X)|>8t-9(2q+r)=6q-r$ and therefore $q=0$. 
Thus $t=2$. 

Let $Y=N[v_1]\cup N[v_2]$. Note that $Y$ is a subset of $N_2[v_i]$ for each $i\in \{1,2\}$.
If $w_1=w_2$, 
 then by Lemma~\ref{Lem:removal-two-edges}, $b_1(G-Y)=0$ and we can check that $\partial(Y)=1$, $\scalebox{1.4}{$\omega$}(G[Y])=20\equiv 2\pmod{9}$, a contradiction to Lemma~\ref{Lem:cutedge4}. 
Thus $w_1 \neq w_2$. 
Suppose that $G-Y$ has an isolated vertex $w$. 
If $d(w)=1$, then $w_1$ or $w_2$ is a $(1,3,3)$-vertex by Lemma~\ref{Lem:no-(2,2)}, which contradicts the maximality of $t$.
Thus $d(w)\ge 2$.
Thus $N(w)=\{w_1,w_2\}$.
 Thus, if $G-Y$ has an isolated vertex other than $w$, then $G$ is determined and so we can check that $\gamma_{b,2}(G)=2$ and $\scalebox{1.4}{$\omega$}(G)=30$, a contradiction.
Hence $w$ is the only isolated vertex in $G-Y$.
Let $Z=Y\cup\{w\}$. Then
  $n_0(G-Z)=0$, $|\partial(Z)|\le 2$, and $\omega_G(Z)=26\ge 21$. This contradicts Claim~\ref{clm:partial_rad_2}. Hence, $G-Y$ has no isolated vertex. 

Since $\scalebox{1.4}{$\omega$}(G[Y])=26\equiv 8\pmod{9}$, by Lemma~\ref{Lem:cutedge4}, $b_1(G-Y)>0$.
By Lemma~\ref{Lem:removal-two-edges}, $b_1(G-Y)=1$. Let $C$ be the $C_4$-component in $G-Y$. Note that $\mathcal{C}(Y)=Y\cup V(C)$ and $\omega_G(\mathcal{C}(Y))\ge 34$,
By Lemma~\ref{Lem:removal-two-edges}, $|\partial(\mathcal{C}(Y))|\leq1$.
Since $V(C)\cap N_2[v_i]\neq\emptyset$ for some $i\in \{1,2\}$, $\gamma_{b,2}(G[\mathcal{C}(Y)])\leq 3$ by assigning $2$ to $v_i$ and $1$ to some vertex on $C$, a contradiction to
Fact~\ref{Fact:main_inequality}.
\end{proof}

Let $v$ be a $(1,3,3)$-vertex, and 
let $N(v)=\{v_1,v_2,v_3\}$ where $d(v_1)=1$.
By letting $X=N[v]=\{v,v_1,v_2,v_3\}$, we have $|\partial(X)|\le 4$ and so 
by Lemma~\ref{Lem:partial=2}, $G-X$ has an isolated vertex.
By Lemma~\ref{Lem:no-(2,2)} and Claim~\ref{Clm:133},
each isolated vertex in $G-X$ has degree $2$.
Thus, since $|V(G)|\geq 8$ by Lemma~\ref{Lem:b(G)=0}, $G-X$ has a unique isolated vertex $w$ with $N(w)=\{v_2,v_3\}$. Then $v_2v_3\not\in E(G)$ by Lemma~\ref{Lem:b(G)=0}.
We follow the vertex labeling shown in Figure~\ref{fig:Lem:no_pendant}.
By Lemma~\ref{Lem:no-(2,2)},
$w_2$ and $w_3$ are $2^+$-vertices.

Let $X_1=X\cup\{w,w_2\}$.  
If $w_2=w_3$, then we can check $\scalebox{1.4}{$\omega$}(G[X_1])=22$ and  $|\partial(X_1)|\le 1$, which is a contradiction to Lemmas~\ref{Lem:b(G)=0}, \ref{Lem:cutedge4}, and \ref{Lem:removal-two-edges}.
Therefore, $w_2 \neq w_3$. 
Then $\scalebox{1.4}{$\omega$}(X_1) \ge 21$ and 
$|\partial(X_1)|\le 3$. Thus $G-X_1$ has an isolated vertex by  Claim~\ref{clm:partial_rad_2}. 
 By Lemma~\ref{Lem:b_2(G)=0}(iii), $G-X_1$ has at most two isolated vertices.
Since $G$ has no $(1,2,3)$-vertex by Lemma~\ref{Lem:no-(2,2)},
$G-X_1$ has only one isolated vertex $w'$ and $N(w')=\{w_2\}$ or $\{v_3,w_2\}$. 
Thus $w'$ is adjacent to $w_2$. 

Let $X_2=X_1\cup \{w'\}$. Then $\omega_G(X_2)\ge 21$ and $|\partial(X_2)|\le 2$. By Lemma~\ref{Lem:removal-two-edges}, $b_1(G-X_2)=0$. 
Since $w'$ is the unique isolated vertex in $G-X_1$,
$n_0(G-X_2)=0$.
In addition, $\gamma_{b,2}(G[X_2])\le 2$ by assigning $2$ to $v_2$, a contradiction to Fact~\ref{Fact:main_inequality}. 
 Hence we have shown that $\delta(G)\ge 2$.  By Lemma~\ref{Lem:existence_deg_1_2}, $\delta(G)=2$.
\end{proof}

\begin{figure}[h!]\centering
\includegraphics[page=5]{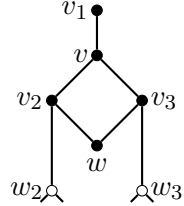}
\caption{Subgraphs mentioned in Lemma~\ref{Lem:no_pendant}}
\label{fig:Lem:no_pendant}
\end{figure}

The following Lemma~\ref{0Lem:no-triagnle-vertex-of-degree2-new} implies Lemma~\ref{Lem:key3.1}(ii)  immediately.
 
\begin{Lem}\label{0Lem:no-triagnle-vertex-of-degree2-new}
Suppose that a $3$-vertex $v$ has a $2^-$-neighbor.
Then 
$|\partial(N[v])|\ge 5$.
Therefore, there is no $(2^-,2^-,3^-)$-vertex,  and 
every vertex on a triangle is a $(3,3,3)$-vertex.
\end{Lem}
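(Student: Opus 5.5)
By Lemma~\ref{Lem:no_pendant}, $\delta(G)=2$, so the $2^-$-neighbor $u$ of $v$ is a $2$-vertex. Let $X=N[v]$. I argue by contradiction and assume $|\partial(X)|\le 4$. Then Lemma~\ref{Lem:partial=2} says that $G-X$ has an isolated vertex $w$.

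\textbf{Structure of $w$.} Since $\delta(G)=2$, every neighbor of $w$ lies in $N(v)$, and $w$ has at least two of them. By Lemma~\ref{Lem:b(G)=0}, every $4$-cycle is induced, so no two neighbors of $w$ in $N(v)$ can be adjacent.

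\textbf{Enlarging the set.} Put $Y=X\cup\{\text{isolated vertices of }G-X\}$. Every such isolated vertex sends at least two edges into $N(v)$, while $|\partial(X)|\le 4$. Hence $G-X$ has at most two isolated vertices, and we get $|\partial(Y)|\le |\partial(X)|-2\le 2$.

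\textbf{Bounding the weight of $Y$.} By Fact~\ref{Fact:WG:even}(ii) applied to the induced counts,
\[
\omega_G(Y)=6|Y|-\sum_{y\in Y}d(y).
\]
Since $u$ is a $2$-vertex and $w$ has degree at most $3$, this gives $\omega_G(Y)\ge 13+3=16$ when $|Y|=5$, and more when $|Y|=6$.

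\textbf{Main case.} The key observation is that assigning $2$ to $v$ gives $\gamma_{b,2}(G[Y])\le 2$, because every vertex of $Y$ is within distance $2$ of $v$ in $G[Y]$. I would then apply Fact~\ref{Fact:main_inequality} to $\mathcal{C}(Y)$. By Lemma~\ref{Lem:removal-two-edges}, a $C_4$-component of $G-Y$ would need $|\partial(C)|\ge 3>|\partial(Y)|$, so $b_1(G-Y)=0$. After absorbing any isolated vertices created by removing $Y$ (each absorption lowers $|\partial|$ further), we have $\mathcal{C}(Y)\supseteq Y$ with small boundary. The inequality $|\partial|>\omega-18$ then requires $\omega_G(\mathcal{C}(Y))\le 19$. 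So the work is to check that either $\omega_G(Y)\ge 20$, or the small cases are eliminated by another route.

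\textbf{Small cases.} The critical case is $|Y|=5$: $u$ is a $2$-vertex, $w$ is adjacent to two vertices of $N(v)$, and $|\partial(Y)|\le 2$. Here $\omega_G(Y)=6\cdot 5-\sum d$. With $d(u)=2$, $d(w)\ge 2$ and the other vertices of degree $3$, this is $\omega_G(Y)=30-(3+2+3+3+d(w))\ge 16$. This case I would handle with Lemma~\ref{Lem:cutedge4} instead. If $G-Y$ has neither isolated vertices nor $C_4$-components, then $r_1+r_2\le 2|\partial(Y)|-1\le 3$. But $\omega(G[Y])\equiv 4$ or $8 \pmod 9$ here (e.g.\ $22$ or $26$ as in Lemma~\ref{Lem:no-(2,2)}), which gives a contradiction. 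If isolated vertices do appear, I would add them, getting $|\partial|\le 1$ and $\omega\ge 21$, and contradict Fact~\ref{Fact:main_inequality}; $|V(G)|\ge 10$ from Lemma~\ref{Lem:key3.1}(i) is not yet available, so I use $|V(G)|\ge 8$ to rule out $Y$ being everything.

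\textbf{Consequences.} A $(2^-,2^-,3^-)$-vertex has $|\partial(N[v])|\le 1+1+2=4$, so it cannot exist. If a vertex on a triangle $vab$ were not a $(3,3,3)$-vertex, then either it is a $2$-vertex, or some triangle vertex is a $3$-vertex with a $2$-neighbor. In the latter case the triangle edge lies inside $N[v]$, so $|\partial(N[v])|\le 4$, a contradiction. In the former case, a $2$-vertex $a$ on the triangle makes $v$ a $3$-vertex with a $2$-neighbor (the graph is not a triangle), and the same count applies.

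I expect the main obstacle to be the bookkeeping of the residue computation in the small cases.
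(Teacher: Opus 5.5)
Your proposal has a genuine gap in the case $|Y|=5$, which is the only case that survives. Everything before that works: $\delta(G)=2$, the isolated vertex $w$ from Lemma~\ref{Lem:partial=2}, the non-adjacency of the neighbours of $w$, and $|\partial(Y)|\le 2$. The case $|Y|=6$ is killed by $|V(G)|\ge 8$, as you note, and your derivation of the two consequences from $|\partial(N[v])|\ge 5$ is correct.

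\textbf{The residue claim is false.} $G[Y]$ has $5$ vertices and at least $5$ edges ($3$ at $v$, at least $2$ at $w$). Hence $\scalebox{1.4}{$\omega$}(G[Y])=30-2|E(G[Y])|\le 20$, and it can never be $22$ or $26$.

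Take the configuration that actually remains: $w$ is a $2$-vertex adjacent to the two $3$-neighbours $v_2,v_3$ of $v$, and the $2$-neighbour $u$ of $v$ is adjacent to $v_2$. Then $|E(G[Y])|=6$, so $\scalebox{1.4}{$\omega$}(G[Y])=18\equiv 0 \pmod 9$, with $|\partial(Y)|=1$ and $\omega_G(Y)=17$. In this situation:
\begin{itemize}
\item Fact~\ref{Fact:main_inequality} only gives $1>17-18$.
\item Lemma~\ref{Lem:cutedge4} only gives $r_2\le 1$. That is consistent, since $\scalebox{1.4}{$\omega$}(G-Y)=\scalebox{1.4}{$\omega$}(G)-16$ together with Lemma~\ref{Lem:existence_deg_1_2} forces $r_2\in\{0,1\}$.
\end{itemize}
The same numbers occur when $d(w)=3$ and $G[Y]\cong K_{2,3}$. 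So neither tool produces a contradiction with your $Y$, and the argument does not close.

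\textbf{The missing idea is to enlarge $Y$ to $N_2[v]$,} by adding the far endpoint $y$ of the boundary edge, which is adjacent to a vertex of $N(v)$. This larger set has the following properties:
\begin{itemize}
\item It still has $\gamma_{b,2}\le 2$, by assigning $2$ to $v$.
\item Its weight is at least $17+3=20$.
\item Its boundary has size at most $d(y)-1\le 2$.
\item $G-N_2[v]$ has no isolated vertex, since such a vertex would be a $1$-vertex.
\item $G-N_2[v]$ has no $C_4$-component, by Lemma~\ref{Lem:removal-two-edges}.
\end{itemize}
Fact~\ref{Fact:main_inequality} then gives $2\ge|\partial|>20-18$, a contradiction.

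To reach that configuration cleanly, the paper first applies Lemma~\ref{Lem:removal-two-edges} to a $4$-cycle through $v$ and $w$. If $d(w)=3$, or if $w$ is adjacent to the $2$-neighbour of $v$, such a $4$-cycle has boundary of size at most $2$, which is impossible. Hence $d(w)=2$ and both neighbours of $w$ are $3$-vertices. Then $|\partial(N[v])|\le 4$ forces an edge from the $2$-neighbour of $v$ to one of them.
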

\begin{proof}
Let $X=N[v]$.
Suppose to the contrary that $|\partial(X)|\le 4$. 
By Lemma~\ref{Lem:partial=2}, $G-X$ has an isolated vertex $x$. 
Then $d(x)\ge 2$ by Lemma~\ref{Lem:no_pendant} and so, by  Lemma~\ref{Lem:b(G)=0}, $x$ is the only isolated vertex in $G-X$.
 If $d(x)=3$, then $N(v)$ is an independent set by Lemma~\ref{Lem:b(G)=0} and by the fact that $|\partial(X)|\le 4$, at most one vertex in $N(v)$ is $3$-vertex, which contradicts Lemma~\ref{Lem:removal-two-edges}.
Thus $d(x)=2$.
Let $N(x)=\{v_1,v_2\}$.
Considering the $4$-cycle containing $x$ and $v$, from
Lemma~\ref{Lem:removal-two-edges},  both $v_1$ and $v_2$ are $3$-vertices. 
Let $v_3$ be the $2$-neighbor of $v$.
Since $|\partial(X)|\le 4$, we may assume that $v_2v_3\in E(G)$.
Then for the set $Y=N_2[v]$, it follows that $|Y|=6$, $\omega_G(Y)\ge 20$, $|\partial(Y)|\le 2$, and $\gamma_{b,2}(G[Y])\le 2$.
By Lemma~\ref{Lem:removal-two-edges}, $b_1(G-Y)=0$.
By Lemma~\ref{Lem:no_pendant}, $n_0(G-Y)=0$.
It is a contradiction to Fact~\ref{Fact:main_inequality}.
\end{proof}

We now prove Lemma~\ref{Lem:key3.1}(i).

\begin{proof}[Proof of Lemma~\ref{Lem:key3.1}(i)]
  To the contrary, suppose that $|V(G)|<10$.
Then by Lemma~\ref{Lem:b(G)=0},
$|V(G)|=8$ or $9$.
By Lemma~\ref{Lem:no_pendant},
$G$ contains a $2$-vertex.  
In addition, since the number of odd vertices is even, the number of $3$-vertices is even.
If $|V(G)|=9$, then $G$ has at least four $3$-vertices by Lemma~\ref{Lem:no-(2,2)},  and so $28\le \scalebox{1.4}{$\omega$}(G) \le 32$, which  contradicts Lemma~\ref{Lem:existence_deg_1_2}. 
Then $|V(G)|=8$ and so $G$ contains at least two $2$-vertices.
If $G$ contains at most four $3$-vertices, then $28\le \scalebox{1.4}{$\omega$}(G)\le 32$, which contradicts Lemma~\ref{Lem:existence_deg_1_2}.
Thus $G$ contains exactly six $3$-vertices and two $2$-vertices.
Then $\scalebox{1.4}{$\omega$}(G)=26$ and so, by Lemma~\ref{Lem:existence_deg_1_2}, $\gamma_{b,2}(G)=3$.
We contract the $2$-vertices of $G$ to obtain a cubic graph $H$ with exactly $6$ vertices. Suppose that $H$ is not simple. Then $H$ has a parallel edge.
Thus by Lemma~\ref{0Lem:no-triagnle-vertex-of-degree2-new}, the two $2$-vertices are on some $4$-cycle of $G$, which contradicts Lemma~\ref{Lem:removal-two-edges}.
Hence $H$ is simple.
It is well-known that $H$ is isomorphic to $K_{3,3}$ or the prism graph.
Then Figure~\ref{fig:6vertices:all} shows all cases, and it is easy to find a $2$-LD broadcast with cost two in each case, a contradiction.
Thus $|V(G)|\ge 10$.

\begin{figure}[h!]
\centering
\includegraphics[width=12cm,page=6]{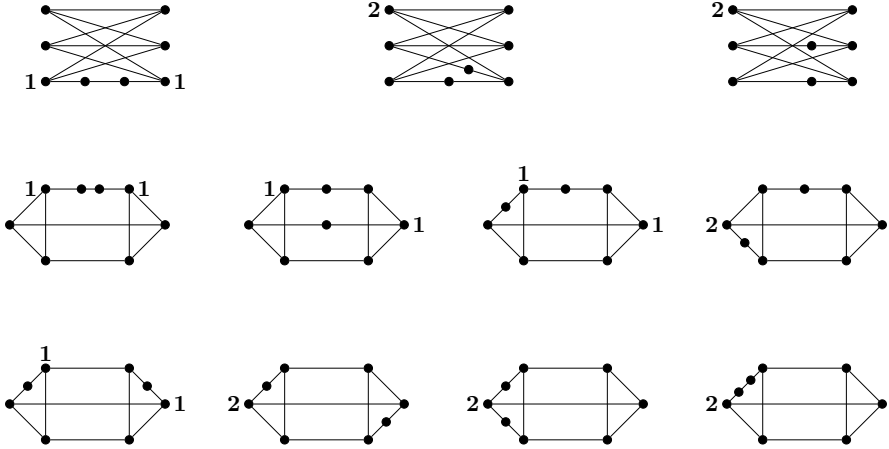}
\caption{All possibilities of $G$, where the numbers indicate the positive values of a $2$-LD broadcast }\label{fig:6vertices:all}
\end{figure}
  
By Fact~\ref{Fact:WG:even}(i), $\scalebox{1.4}{$\omega$}(G)$ is even. Thus $|V(G)|\ge 10$ implies $\scalebox{1.4}{$\omega$}(G)\ge 32$ and so $\gamma_{b,2}(G)\ge 4$.
Then by Lemma~\ref{Lem:existence_deg_1_2}, $\scalebox{1.4}{$\omega$}(G)\ge 34$.
\end{proof}

We finish this subsection with the proof of Lemma~\ref{Lem:key3.1}(iii).

\begin{proof}[Proof of Lemma~\ref{Lem:key3.1}(iii)]
By Lemma~\ref{Lem:no_pendant},
 $\delta(G)=2$.
Take a $3$-vertex $v$ in $G$.
If $v$ has a $2^-$-neighbor,
then $v$ is a $(2,3,3)$-vertex by Lemma~\ref{0Lem:no-triagnle-vertex-of-degree2-new}.
If $v$ has no $2^-$-neighbor,
then $v$ is a $(3,3,3$)-vertex. 

It remains to show that every $2$-vertex in $G$ is a $(3,3)$-vertex. 
Suppose to the contrary that $G$ contains two adjacent $2$-vertices $v_1$ and $v_2$. 
For each $i\in \{1,2\}$, let $u_i$ be the neighbor of $v_i$ other than $v_{3-i}$.  By Lemma~\ref{Lem:no-(2,2)}, each $u_i$ is a $3$-vertex.
By Lemma~\ref{0Lem:no-triagnle-vertex-of-degree2-new}, $u_1\neq u_2$.
By Lemma~\ref{Lem:removal-two-edges}, $u_1u_2 \not \in E(G)$.
If $u_1$ and $u_2$ have two common neighbors $w$ and $w'$, then for $X=\{v_1,v_2,u_1,u_2,w,w'\}$, it holds that $1\le|\partial(X)|\le 2$, $\scalebox{1.4}{$\omega$}(G[X])=22$, $b(G-X)=0$, and $n_0(G-X)=0$ by Lemmas~\ref{Lem:removal-two-edges}, \ref{Lem:key3.1}(i), and   \ref{Lem:no_pendant}, which contradicts Lemma~\ref{Lem:cutedge4}.
Thus $u_1$ and $u_2$ have at most one common neighbor.
For each $i\in \{1,2\}$, let $w_i$ be a neighbor of $u_i$ other than $v_i$ that is not a neighbor of $u_{3-i}$.
By Lemma~\ref{0Lem:no-triagnle-vertex-of-degree2-new},  each $w_i$ is a $3$-vertex. 
We take $w_1$ and $w_2$ so that $w_1w_2$ is not an edge in $G$, if possible. Let $w'_i$ be the neighbor of $u_i$ other than $v_i$ and $w_i$.

Let $G'$ be the graph obtained from $G-\{v_1,v_2,u_1\}$ by adding an edge $w_1u_2$, that is, $G'=G-\{v_1,v_2,u_1\}+w_1u_2$.
Then  $d_{G'}(w_1)=d_{G'}(u_2)=3$.
It is clear that $G'$ has no isolated vertex, since $d_{G'}(w'_i)=d_G(w'_i)=3$ by Lemma~\ref{0Lem:no-triagnle-vertex-of-degree2-new}.

In the following, we will show that $b(G')=0$. 
Suppose to the contrary that $b(G')>0$. Note that $G'$ has no $C_4$-component  by Lemma~\ref{Lem:removal-two-edges}.
Then $G'$ contains a component $K$ isomorphic to $K_4^*$, and moreover $K$ contains the edge $w_1u_2$, since every $4$-cycle of $G$ is an induced $4$-cycle.  Since $|V(G)|\ge 10$, $w'_1$ is not a vertex of $K$. Then by the choice of $w_1$ and $w_2$, it follows that $w_1w_2\not\in E(G)$. Note that in the graph $K$, $w_1,w_2,u_2,w'_2$ form a $K_4$ with a missing edge. Then $u_2,w'_2,w_2$ form a triangle in $G$ and $u_2$ has a $2$-neighbor in $G$, a contradiction to Lemma~\ref{0Lem:no-triagnle-vertex-of-degree2-new}.
Hence $b(G')=0$. 

Then $\scalebox{1.4}{$\omega$}(G')= \scalebox{1.4}{$\omega$}(G)-10$.
In addition, by the choice of $G$,
\[9\gamma_{b,2}(G') \le \scalebox{1.4}{$\omega$}(G') = \scalebox{1.4}{$\omega$}(G)-10 < 9\gamma_{b,2}(G)-10 < 9(\gamma_{b,2}(G)-1) \]
and so $\gamma_{b,2}(G') +1 < \gamma_{b,2}(G)$.
Now we take a $2$-LD broadcast $f$ in $G'$ such that  $f(V(G'))=\gamma_{b,2}(G')$.
For every $x\in \{ v_1,v_2,u_1\}$, 
let $h_x(\nu)$ be a function on $V(G)$ defined by $1$ for $\nu=x$ and by $0$ for the others. 
If $f\cup h_{v_1}$ is a $2$-LD broadcast of $G$, then it has cost $\gamma_{b,2}(G')+1$, a contradiction.
Thus $f \cup h_{v_1}$ is not a $2$-LD broadcast of $G$, and so  
$f$ is not a $2$-LD broadcast of $G'-w_1u_2$.
It follows that
        exactly one vertex, say $y$, of $w_1$ and $u_2$
        hears from some vertex  under $f$ in $G'-w_1u_2$ and the other vertex, say $y'$, does not. 
        If $f(y)=2$, then
        let $x^*=y'$.
        If $f(y)\ne 2$, then
    letting $x^*\in\{u_1,v_2\}$ be the neighbor of $y'$.
    In each case, $f \cup h_{x^*}$ becomes a $2$-LD broadcast of $G$ with cost 
    $\gamma_{b,2}(G')+1$, a contradiction.
\end{proof}

\subsection{Refining possible cases}\label{subsec:allcases}

Throughout the remainder of the paper, we use the following notation and symbols, which will be  useful for the proofs. Some of them may seem redundant, but we introduce this notation to distinguish these arguments from earlier ones.

Let $t$ be a positive integer. 
For a vertex $v$ in $G$, let $p_t(v)$ denote the number of $t$-neighbors of $v$.
Recall that  $\delta(G)=2$ by Lemma~\ref{Lem:key3.1}(iii).
Hence $d(v)=p_2(v)+p_3(v)$.
For $X\subset V(G)$, let $A(X)$ be the set of $C_4$-components of $G-X$, and $I(X)$ be the set of isolated vertices of $G-X$.
We let $a(X)=|A(X)|$  and $i(X)=|I(X)|$.
Let $a_t(X)$ be the number of elements $C\in A(X)$ such that $|\partial(C)|=t$, and let $i_t(X)$ be the number of elements $w\in I(X)$ such that $d(w)=t$.
Note that  by Lemmas~\ref{Lem:removal-two-edges}  and~\ref{Lem:key3.1}(iii), 
$a(X)=a_3(X)+a_4(X)$  and  $i(X)=i_2(X)+i_3(X)$.
When there is no risk of confusion, we write $p_t$, $a$, $i$, $a_t$, and $i_t$ for $p_t(v)$, $a(X)$, $i(X)$, $a_t(X)$, and $i_t(X)$, respectively.
Note that $a(X)=b_1(G-X)$ and $i(X)=n_0(G-X)$.

Let $V^t$ be the set of $3$-vertices that are not on a triangle. For a vertex $v$ in $V^t$, let $B(v):=N_2[v]\setminus N[v]$, $\ell(v):=|E(G[B(v)])|$. 
For $t\in\{2,3\}$, let $\beta_t(v)$ be the number of $t$-vertices in $B(v)$. 
Let $\beta(v)=\beta_2(v)+\beta_3(v)$. 
If there is no confusion, we write $B$, $\ell$, $\beta_t$, and $\beta$ for $B(v)$, $\ell(v)$, $\beta_t(v)$, and $\beta(v)$, respectively.
See Figure~\ref{fig:structure by N_2[v]} for an illustration. From definition, it immediately holds that $p_2+p_3=3$ and $\beta\le {p_2+2p_3=3+p_3}$.
By Lemma~\ref{Lem:key3.1}(iii), 
$\beta_2+p_2\le 3$. 

\begin{figure}[h!]\centering
\includegraphics[page=7, width=8cm]{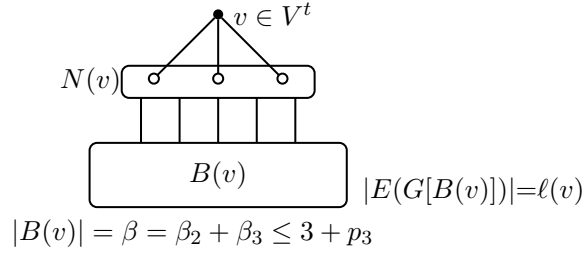}
\caption{The structure induced by $v$, $N(v)$, and $B(v)$}\label{fig:structure by N_2[v]}
\end{figure}

Let $V^*$ be the set of $3$-vertices $v$ such that $v\in V^t$ and
$\beta(v) \le 5$, that is,
\[ V^*=\{ v\mid v\text{ is not on a triangle, }  d(v)=3, \beta(v) \le 5 \}. \]
Note that $V^*\subset V^t$, and every $3$-vertex with a $2$-neighbor is in $V^*$.

In the remaining part of this subsection, we aim to show the following Lemma~\ref{Lem:possible:cases}, which  plays a key role in Subsections \ref{subsect:3.3} and \ref{subsect:3.4}. The lemma shows that the structure of $G - N_2[v]$ can occur in only three cases. We say \textit{$v$ is in the case $(Q_{r,a,i})$} if $a=a(N_2[v])$, $i=i(N_2[v])$, and $r=\gamma_{b,2}(G[\mathcal{C}(N_2[v])])$. 

\begin{Lem}\label{Lem:possible:cases}
Let $v\in V^*$, 
{$X=N_2[v]$,} and $v$ be in the case $(Q_{r,a,i})$. One of the following holds.
\begin{itemize}
\item[]  $[\ (Q_{3,0,1}) \ ]$ \quad  $r=3$, $a=0$, $i=1$, $\beta_2+\ell\le 1$,   and if $\beta_2=\ell=0$, then $|\partial(X)|\ge 2i_2+3i_3+2$.
\item[]  $[\ (Q_{4,0,2})\ ]$ \quad $r=4$, $a=0$, $i=i_2=2$, $\beta_2+\ell\le 2$, $\beta=5$, {and} $\beta_2\le 1$. 
\item[]  $[\ (Q_{4,1,0})\ ]$ \quad $r=4$, $a=1$, $i=0$, {and} $\beta_2=\ell=0$.
\end{itemize}
\end{Lem}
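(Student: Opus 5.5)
The plan is to apply the "moreover" part of Fact~\ref{Fact:main_inequality} to $X=N_2[v]$ and $\mathcal{C}(X)=X\cup\bigcup_{C\in A(X)}V(C)\cup I(X)$. This gives
\[
|\partial(\mathcal{C}(X))|-q> \omega_G(\mathcal{C}(X))-9r, \qquad q\ge 0 .
\]
Every term here is explicit in $p_2,p_3,\beta_2,\beta_3,\ell,a_3,a_4,i_2,i_3$, and the conclusion will come from playing this inequality against an upper bound on $r$.

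First, the weights. By Lemma~\ref{Lem:key3.1}(iii), $\omega_G(X)=3+(3+p_2)+3\beta_3+4\beta_2$. Each $C\in A(X)$ contributes $16-|\partial(C)|$ (that is, 13 or 12), and each isolated vertex contributes $4$ or $3$.

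Next, the boundary. Every edge leaving $B$ goes either to $I(X)$, to some $C\in A(X)$, or to $V(G)\setminus\mathcal{C}(X)$. Counting degrees in $B$:
\[
|\partial(\mathcal{C}(X))|=|\partial(X)|-2i_2-3i_3-3a_3-4a_4,
\]
\[
|\partial(X)|=3\beta_3+2\beta_2-2\ell-(\text{edges from }B\text{ back to }N(v))-\cdots .
\]
Since $v\notin$ triangle, $B$ receives exactly $p_2\cdot 1+p_3\cdot 2$ edges from $N(v)$.

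Then the upper bounds on $r$. I assign $2$ to $v$, which covers $X$. Each $C_4$-component is covered by one vertex of value $1$. An isolated vertex of $G-X$ is covered either by a $1$ on itself, or, when several such vertices share neighbours, by upgrading a vertex of $N(v)$ to value $2$. This gives $r\le 2+a+i$, and $r\le 2+a+\lceil i/\cdot\rceil$ in special configurations. Substituting these bounds into the inequality, together with $\beta\le5$ (since $v\in V^*$), $p_2+\beta_2\le 3$, and the fact $|\partial(X)|\le 3\beta_3+2\beta_2-2\ell-(3+p_3)$, should force $a+i\le 2$. It should also rule out $a+i=0$: in that case $r=2$ and $\omega_G(X)\ge 6+3\cdot 6$, while $|\partial(X)|$ is small.

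The remaining combinations are handled case by case.

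$a=1,i=1$ and $a=2$ will be excluded by a finer covering of cost $r\le 3$ or $4$. One move here is to put the $2$ on a vertex of $N(v)$ adjacent to the component; the distance arguments in Lemma~\ref{Lem:removal-two-edges} give templates for this.

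$i=2$ with an $i_3$ vertex is excluded by the weight count.

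In $(Q_{4,1,0})$, a single edge inside $B$ or a $2$-vertex in $B$ would drop $|\partial|$ enough to violate the inequality, so $\beta_2=\ell=0$.

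In $(Q_{4,0,2})$, the inequality leaves only slack for $\beta_2+\ell\le2$ and $\beta=5$. Here $\beta_2\le1$ should follow because two $2$-vertices in $B$ adjacent to two $2$-vertices of $I(X)$ would give a cheaper broadcast or violate Lemma~\ref{Lem:key3.1}(iii).

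In $(Q_{3,0,1})$, the slack gives $\beta_2+\ell\le1$. When $\beta_2=\ell=0$, the inequality read with $q\ge0$, together with evenness from Fact~\ref{Fact:WG:even}(i) applied to $G-\mathcal{C}(X)$, yields the stated lower bound $|\partial(X)|\ge 2i_2+3i_3+2$.

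The case $r$ smaller than the naive bound (e.g.\ $r=3$ with $i=2$) must also be shown impossible or absorbed, again via the inequality.

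I expect the main obstacle to be the exact bookkeeping of $r$. The inequality only yields a contradiction when the covering bound on $r$ is tight, so for each pair $(a,i)$ I must exhibit a cheap broadcast on $G[\mathcal{C}(X)]$ that exploits adjacency between $I(X)$, the $C_4$'s and $B$ using Lemma~\ref{Lem:removal-two-edges} ($|\partial(C)|\ge3$). I would first try the uniform strategy "value $2$ at $v$ plus value $1$ per extra piece", and refine it only in the cases where the count fails by one.
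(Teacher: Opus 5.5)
Your weight and boundary bookkeeping is fine once you correct $\omega_G(N[v])$ to $12+p_2$ (you wrote $6+p_2$); this is essentially Lemma~\ref{Lem:preview}(i),(ii). The slack conclusions you draw in the three surviving cases also follow. But the step that is supposed to reduce to those cases has a genuine gap, because the covering bound you build on is false.

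A value-$1$ vertex on a $C_4$-component $C$ of $G-X$ leaves the antipodal vertex of $C$ uncovered. The value $2$ at $v$ stops at $B$. A value $2$ on a vertex of $N(v)$, your suggested move, reaches only the vertices of $C$ adjacent to its own $B$-neighbours. So each $C_4$-component costs $2$, and the right baseline is $r\le\min\{2a+i+2,\,a_3+p_3+3\}$ (Fact~\ref{observation:r}).

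Your bound $r\le 2+a+i$ even contradicts your own endgame. The main inequality reads $9r>16a+6i+2(\beta_2+\ell)+18$, which forces $r\ge4$ for $(a,i)=(1,0)$, so your bound would wrongly exclude $(Q_{4,1,0})$. With the correct bound, the inequality does not force $a+i\le 2$. It leaves $a=3$, $i=4$, and eight cases with $r\ge5$: $(Q_{5,0,3})$, $(Q_{5,1,1})$, $(Q_{6,1,2})$, $(Q_{6,1,3})$, $(Q_{7,1,3})$, $(Q_{6,2,0})$, $(Q_{7,2,1})$, $(Q_{7,2,2})$.

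Killing these cases needs structural input that is absent from your plan. First, a vertex of $B$ adjacent to two members of $I(X)\cup A(X)$ saves $1$, and two disjoint such vertices save $2$ (Remark~\ref{rmk:r}). With $\beta\le 5$ and pigeonhole this disposes of $a+i=3$. Together with the rigid structure forced when $a+i=4$, it also disposes of $a+i=4$. Second, and crucially, a $C_4$-component of $G-X$ is adjacent to at least three vertices of $B$ (Lemma~\ref{Lem:C_adjacent_three-ver2} and Claim~\ref{clm:C_adjacent_three}). This makes $(Q_{6,2,0})$ impossible when no vertex of $B$ is shared. In $(Q_{5,1,1})$ it lets one value $2$ on a suitable vertex of $C$, plus value $1$ on $v$ and on the isolated vertex, cover $\mathcal{C}(X)$ at cost $4$.

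The second missing ingredient is the modular step, Lemma~\ref{Lem:preview}(iii). Several conclusions depend on it: excluding $a=3$ and $i=4$, pinning down the $a+i=4$ configurations, and the condition $|\partial(X)|\ge 2i_2+3i_3+2$ in $(Q_{3,0,1})$. Each uses the fact that when $|\partial(\mathcal{C}(X))|\le1$ one can apply minimality to $G[\mathcal{C}(X)]$ itself (Lemma~\ref{Lem:cutedge4}), or use $9\gamma_{b,2}(G)-\scalebox{1.4}{$\omega$}(G)\in\{1,2\}$ (Lemma~\ref{Lem:existence_deg_1_2}).

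Fact~\ref{Fact:main_inequality} plus evenness cannot substitute for this. Take $(Q_{3,0,1})$ with $\beta_2=\ell=0$ and $\partial(\mathcal{C}(X))=\emptyset$. Then $\scalebox{1.4}{$\omega$}(G)=24$, which is even, and $27>24$ is consistent with the main inequality; the contradiction is $27-24=3>2$. With one boundary edge, the contradiction is $\scalebox{1.4}{$\omega$}(G[\mathcal{C}(X)])=26<27=9r$. That is minimality applied to $G[\mathcal{C}(X)]$, which the main inequality never sees, since it uses minimality only on $G-\mathcal{C}(X)$.

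Finally, in $(Q_{4,0,2})$ the weight count gives only $\beta_2+\ell\le2$. The facts $i_3=0$, $\beta=5$ and $\beta_2\le1$ come from covering, not weights, and so does your claimed exclusion of a $3$-vertex in $I(X)$ when $i=2$. Each vertex of $B$ meets at most one isolated vertex, else $r\le3$. If $N(w_1)\cup N(w_2)\supseteq B$, then value $1$ on $v,w_1,w_2$ gives $r\le3$. By Lemma~\ref{Lem:key3.1}(iii), two isolated $2$-vertices have four distinct $3$-neighbours in $B$, so $4+\beta_2\le\beta\le5$.
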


First, we collect some properties on the parameters defined previously.

\begin{Lem}\label{Lem:preview}
Let $v\in V^t$ and $X=N_2[v]$. 
Then the following hold.
\begin{itemize}
\item[\rm(i)] $ 3a+2i\le 3a_3+4a_4+2i_2+3i_3\leq |\partial(X)| \le 2\beta-\beta_2-2\ell $
\item[\rm(ii)]   
$\omega_G(\mathcal{C}(X))-|\partial(\mathcal{C}(X))|= 16a+6i+2(\beta_2+\ell)+18$.
\item[\rm(iii)]   If $|\partial(\mathcal{C}(X) )|\le 1$, then 
 $7a+6i+2(\beta_2+\ell)\equiv 7,8 \pmod{9}$. 
\end{itemize}  
\end{Lem}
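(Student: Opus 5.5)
The plan is to prove all three parts by direct edge-counting in $X=N_2[v]$, using the structure forced by $v\in V^t$, and then to add the pieces of $\mathcal{C}(X)\setminus X$ one at a time.

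\textbf{Setup.} Since $v$ is a $3$-vertex not on a triangle, $N(v)$ is an independent set of size $3$. Every edge at a vertex of $N(v)$ other than the edge to $v$ therefore goes into $B$. So
\[|E[N(v),B]|=p_2+2p_3=3+p_3 .\]
Every vertex of $B$ has at least one neighbor in $N(v)$. By Lemma~\ref{Lem:key3.1}(iii), $\delta(G)=2$, so every vertex of $B$ has degree $2$ or $3$.

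\textbf{Part (i).} For the upper bound, every edge of $\partial(X)$ has its endpoint in $X$ lying in $B$. Hence
\[|\partial(X)|=\sum_{b\in B}\bigl(d(b)-|N(b)\cap N(v)|\bigr)-2\ell\le \sum_{b\in B}(d(b)-1)-2\ell=\beta_2+2\beta_3-2\ell=2\beta-\beta_2-2\ell .\]
For the lower bound, each $C_4$-component $C$ of $G-X$ and each isolated vertex $w$ of $G-X$ sends all of its boundary edges into $X$. These edge sets are pairwise disjoint subsets of $\partial(X)$. By Lemma~\ref{Lem:removal-two-edges} and Lemma~\ref{Lem:key3.1}(iii), $|\partial(C)|\in\{3,4\}$ and $d(w)\in\{2,3\}$. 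This gives the middle inequality, and the leftmost inequality is then immediate.

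\textbf{Part (ii).} For any $Y\subseteq V(G)$, the handshake identity gives
\[\omega_G(Y)-|\partial(Y)|=6|Y|-2|E(G[Y])|-2|\partial(Y)|.\]
First take $Y=X$. We have $|X|=4+\beta$ and $|E(G[X])|=3+(3+p_3)+\ell$. Also
\[|\partial(X)|=2\beta_2+3\beta_3-(3+p_3)-2\ell .\]
Substituting, everything cancels except $18+2\beta_2+2\ell$.

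Next, adding a $C_4$-component $C$ of $G-X$ to $Y$ changes the quantity by $24-2(4+|\partial C|)+2|\partial C|=16$. Here we use that all of $\partial(C)$ goes into $X$, so those edges turn from boundary edges into internal edges. Similarly, adding an isolated vertex $w$ changes it by $6-2d(w)+2d(w)=6$. Summing over all added pieces yields (ii).

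\textbf{Part (iii).} Let $Y=\mathcal{C}(X)$. This is the step requiring the most care, mainly in checking the hypotheses of the earlier lemmas.

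If $Y=V(G)$, then $\partial(Y)=\emptyset$ and $b(G)=0$. So the quantity in (ii) equals $\scalebox{1.4}{$\omega$}(G)$, which is $\equiv 7,8\pmod 9$ by Lemma~\ref{Lem:existence_deg_1_2}.

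Otherwise, connectivity of $G$ gives $|\partial(Y)|=1$. Let $S=\partial(Y)$, a single cut edge. Then $G-S$ consists of exactly two components: $G[Y]$, which is connected because every added piece is attached to $X$, and $G-Y$. By the definition of $\mathcal{C}$, $n_0(G-Y)=b_1(G-Y)=0$. Since $G[Y]$ has at least $5$ vertices, it has no isolated vertices and is not a $C_4$. Lemma~\ref{Lem:cutedge4} therefore gives $r_1+r_2\le 1$, so $\scalebox{1.4}{$\omega$}(G[Y])\equiv 0,1\pmod 9$.

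By Lemma~\ref{Lem:b_2(G)=0}(i), $b(G[Y])=0$. Hence
\[\scalebox{1.4}{$\omega$}(G[Y])=\omega_G(Y)+|S|=\bigl(\omega_G(Y)-|S|\bigr)+2,\]
so $\omega_G(Y)-|S|\equiv 7,8\pmod 9$. Finally, reduce (ii) modulo $9$ using $16a+18\equiv 7a$ to conclude (iii).
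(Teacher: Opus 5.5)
Your proposal is correct and follows the paper's argument. For (i), your observation that each vertex of $B$ has a neighbor in $N(v)$ is exactly the paper's bound $\beta\le p_2+2p_3=3+p_3$ in the same edge count at $B$. For (iii), you use Lemma~\ref{Lem:existence_deg_1_2} and Lemma~\ref{Lem:cutedge4} in the same two cases as the paper. The only difference is cosmetic and lies in (ii): you use the handshake identity $\omega_G(Y)-|\partial(Y)|=6|Y|-2|E(G[Y])|-2|\partial(Y)|$ and add $16$ per $C_4$-component and $6$ per isolated vertex, whereas the paper counts the $2$- and $3$-vertices of $\mathcal{C}(X)$ directly. Your version has the small advantage of not needing to track how many $2$-vertices each $C_4$-component contains.
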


\begin{proof}(i):  Note  that $p_2+p_3=3$.  
Since $v$ is not on a triangle by the assumption that $v\in V^t$, 
\[ |\partial(X)|= 2\beta_2+3\beta_3- (2p_2+3p_3-3+2\ell)=2\beta_2+3\beta_3- (3+p_3+2\ell).\]
Since $\beta\le {p_2+2p_3=3+p_3}$, we have
\[ 3a+2i\le 3a_3+4a_4+2i_2+3i_3 \leq |\partial(X)| \le 2\beta_2+3\beta_3-\beta-2\ell=2\beta-\beta_2-2\ell.  \]
where the second inequality holds by considering the $C_4$-components or the isolated vertices of $G-X$.
Thus (i) holds.

\noindent (ii): 
By Lemma~\ref{Lem:removal-two-edges}, the number of $2$-vertices contained in a $C_4$-component in $G-X$ is $a_3$, and so 
\begin{align*}
\omega_G(\mathcal{C}(X))
&= 3\times (\text{the number of $3$-vertices in }\mathcal{C}(X)) +4 \times(\text{the number of $2$-vertices in }\mathcal{C}(X))\\
&=  3(1+p_3+\beta_3+4a_4+3a_3+i_3)+4(p_2+\beta_2+i_2+a_3)\\
&=3\beta_3 +4\beta_2+12a_4+ 13a_3+4i_2+3i_3+p_2+12.
\end{align*}
In addition,
\[
|\partial(\mathcal{C}(X))|=|\partial(X)|-3a_3-4a_4 -2i_2-3i_3=  2\beta_2+3\beta_3- (3+p_3+2\ell)-3a_3-4a_4 -2i_2-3i_3.
\]
Therefore (ii) holds, since 
\begin{align*}
\omega_G(\mathcal{C}(X))-|\partial(\mathcal{C}(X))| 
&=(3\beta_3 +4\beta_2+12a_4+13a_3  +4i_2+3i_3+p_2+12) \\
&\qquad -( 2\beta_2+3\beta_3- (3+p_3+2\ell)-3a_3-4a_4 -2i_2-3i_3 )\\
&= 16a_4+16a_3+6(i_2+i_3)+2\beta_2+2\ell+18\\
& =16a+6i+2(\beta_2+\ell)+18.
\end{align*}

\noindent (iii):
If $\partial(\mathcal{C}(X) )=\emptyset$,
then $\mathcal{C}(X)=V(G)$ and so
 $\scalebox{1.4}{$\omega$}(G)= \omega_G(\mathcal{C}(X))=16a+6i+2(\beta_2+\ell)+18$ by (ii), which implies from Lemma~\ref{Lem:existence_deg_1_2} that 
\[ \scalebox{1.4}{$\omega$}(G)\equiv 7a+6i+2(\beta_2+\ell)\equiv 7,8 \pmod{9}.\]
If $|\partial(\mathcal{C}(X) )|=1$, then $\omega_G(\mathcal{C}(X))= 16a+6i+2(\beta_2+\ell)+19$ by (ii) and so 
$\scalebox{1.4}{$\omega$}( G[\mathcal{C}(X)])= \omega_G(\mathcal{C}(X))+1  =16a+6i+2(\beta_2+\ell)+20$, which implies from Lemma~\ref{Lem:cutedge4} that
\[  \scalebox{1.4}{$\omega$}( G[\mathcal{C}(X)])\equiv 7a+6i+2(\beta_2+\ell)+2\equiv 0,1 \pmod{9}.   \]
Hence (iii) holds.
\end{proof}

\begin{Lem}\label{Lem:every}
Let $v\in V^t$.
Then the following hold.
\begin{itemize}
    \item[\rm(i)]
For $X=N_2[v]$, 
    $a(X)>0$ or $i(X)>0$.
    \item[\rm(ii)] If $B(v)$ contains a $2$-vertex $x$ and $Y=N_2[v]\setminus \{u\}$ where $u$ is a vertex in $B(v)$ with exactly one neighbor in $X$ and $u\neq x$, then   $a(Y)>0$ or $i(Y)>0$. 
\end{itemize}
\end{Lem}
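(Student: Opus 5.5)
Both parts go by contradiction through the ``moreover'' part of Fact~\ref{Fact:main_inequality}, with the identity of Lemma~\ref{Lem:preview}(ii) supplying the weight count.

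\textbf{Setting up the identity.} First I extract from Lemma~\ref{Lem:preview}(ii) an identity for $X=N_2[v]$ itself. Passing from $X$ to $\mathcal{C}(X)$ adds one term per $C_4$-component or isolated vertex of $G-X$:
\begin{itemize}
\item[] a $C_4$-component with $|\partial(C)|=3$ adds $13$ to the weight and removes $3$ boundary edges, and one with $|\partial(C)|=4$ adds $12$ and removes $4$; either way $\omega-|\partial|$ rises by $16$;
\item[] an isolated $2$-vertex contributes $4+2=6$, and an isolated $3$-vertex contributes $3+3=6$.
\end{itemize}
These are exactly the terms $16a+6i$ in Lemma~\ref{Lem:preview}(ii). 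Hence
\[\omega_G(X)-|\partial(X)|=2(\beta_2+\ell)+18\]
for every $v\in V^t$. The same bookkeeping applies to any vertex set.

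\textbf{Part (i).} Suppose $a(X)=i(X)=0$, so $\mathcal{C}(X)=X$. Assigning $2$ to $v$ dominates $G[X]$, because $v$ is within distance two of every vertex of $X$ inside $G[X]$; thus $\gamma_{b,2}(G[X])\le 2$. The ``moreover'' part of Fact~\ref{Fact:main_inequality}, with $q\ge0$, gives
\[|\partial(X)|\ \ge\ |\partial(X)|-q\ >\ \omega_G(X)-18\ =\ |\partial(X)|+2(\beta_2+\ell)\ \ge\ |\partial(X)|,\]
a contradiction. The case $\mathcal{C}(X)=V(G)$ is already covered inside Fact~\ref{Fact:main_inequality}.

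\textbf{Part (ii).} Let $Y=X\setminus\{u\}$ and suppose $a(Y)=i(Y)=0$, so $\mathcal{C}(Y)=Y$.
\begin{enumerate}
\item Still $\gamma_{b,2}(G[Y])\le 2$ via $f(v)=2$. Deleting $u$ does not lengthen any path from $v$ to another vertex of $B(v)$, since each such vertex has a neighbor in $N(v)$.
\item Since $u$ has exactly one neighbor in $X$, namely its neighbor in $N(v)$, it has no neighbor in $B(v)$. So $u$ has $d(u)-1$ edges leaving $X$, and
\[|\partial(Y)|=|\partial(X)|-(d(u)-1)+1,\qquad \omega_G(Y)=\omega_G(X)-(6-d(u)).\]
\item Combining with the identity,
\[\omega_G(Y)-|\partial(Y)|=2(\beta_2+\ell)+18+2d(u)-8.\]
\item Now use $x$, a $2$-vertex in $B(v)$, so $\beta_2\ge1$. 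If $d(u)=3$, the right side is at least $18$. If $d(u)=2$, then $u\neq x$ is a second $2$-vertex of $B(v)$, so $\beta_2\ge 2$ and the right side is again at least $18$.
\end{enumerate}
Fact~\ref{Fact:main_inequality} applied to $Y$ then gives $|\partial(Y)|>\omega_G(Y)-18\ge|\partial(Y)|$, a contradiction.

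The main point needing care is the edge-count bookkeeping in step 2, together with the use of the hypothesis $u\ne x$ to raise $\beta_2$ to $2$ when $d(u)=2$; everything else is routine.
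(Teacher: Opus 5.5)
Your proof is correct and follows essentially the paper's argument. You assume $a=i=0$, bound $\gamma_{b,2}$ of $G[X]$ (resp.\ $G[Y]$) by $2$ via $f(v)=2$, and contradict Fact~\ref{Fact:main_inequality} through the same weight/boundary bookkeeping, including $|\partial(Y)|=|\partial(X)|-(d(u)-2)$ and the use of $u\neq x$ to get $\beta_2\ge 4-d(u)$. The only differences are cosmetic: you read off the exact identity $\omega_G(X)-|\partial(X)|=2(\beta_2+\ell)+18$ from Lemma~\ref{Lem:preview}(ii) and apply the \emph{moreover} part of the Fact, whereas the paper rederives the inequality $6-p_2\le|\partial(N[v])|\le 3\beta-\beta_2-|\partial(X)|$ and applies the first part after noting $X\neq V(G)$.
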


\begin{proof} 
By Lemma~\ref{Lem:key3.1}(i), $\gamma_{b,2}(G)\ge 4$ and so $V(G)\neq X$.   
Note that
$\gamma_{b,2}(G[X])\le 2$ and $\omega_G(X)\ge 3|X|+p_2= 3(\beta+4)+p_2$.  
Since  $v$ is not on a triangle, we have 
\begin{eqnarray}\label{eq:}
&&6-p_2 \le  |\partial(N[v])| \le  3\beta-\beta_2-|\partial(X)|.
\end{eqnarray}

\noindent (i): Suppose that $a(X)=i(X)=0$. Then by Fact~\ref{Fact:main_inequality}, 
$ |\partial(X)| >3(\beta+4)+p_2-9\gamma_{b,2}(G[X])\ge 3\beta+p_2-6$ and so $6-p_2> 3\beta-|\partial(X)|$, which is a contradiction to~\eqref{eq:}. Thus (i) holds.

{\noindent (ii):} Note that $\gamma_{b,2}(G[Y])\le 2$ 
and  $|\partial(Y)|= |\partial(X)|-(d(u)-2)$.
Suppose $a(Y)=i(Y)=0$.  
Since $x \in B$, we have $\beta_2\ge (3-d(u))+(3-d(x))=4-d(u)$  and $\omega_G(Y)\ge 3(\beta+3)+p_2+1$.
By Fact~\ref{Fact:main_inequality},  $|\partial(Y)| >3(\beta+3)+p_2+1-9\gamma_{b,2}(G[Y])\ge 3\beta+p_2-8$, and so 
\[  |\partial(X)|-(d(u)-2) > 3\beta+p_2-8 \ge -2 +\beta_2 +  |\partial(X)|\ge-2 + (4-d(u))+  |\partial(X)|\]
where the  second inequality is from~\eqref{eq:} and the last inequality is from the fact that $\beta_2\ge  4-d(u)$, a contradiction.
Thus (ii) holds.
\end{proof}

\begin{Lem}\label{Lem:C_adjacent_three-ver2}
Let $v\in V^t$. 
If $C$ is a $C_4$-component of $G-N_2[v]$  with $|\partial(C)|=3$, then 
$C$ is adjacent to distinct three vertices of $B(v)$.
\end{Lem}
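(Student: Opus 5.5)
The idea is to show that no vertex of $N_2[v]$ can receive two of the three edges of $\partial(C)$, and to obtain a contradiction from a small dense vertex set. Let $X=N_2[v]$ and write $C: c_1c_2c_3c_4c_1$. By Lemma~\ref{Lem:b(G)=0}, $C$ is an induced $4$-cycle. Since $|\partial(C)|=3$ and $\delta(G)=2$ by Lemma~\ref{Lem:key3.1}(iii), $C$ has exactly one $2$-vertex, and each of its other three vertices sends exactly one edge out of $C$. These edges end in $X$, because $C$ is a component of $G-X$. They cannot end in $N[v]$, since every neighbor of a vertex of $N[v]$ lies in $N_2[v]$, so their endpoints lie in $B(v)$. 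It therefore suffices to show that no vertex $b\in B(v)$ is adjacent to two distinct vertices of $C$. Suppose instead that $b$ is adjacent to $c_i$ and $c_j$ with $i\neq j$.

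\emph{Case 1: $c_ic_j\in E(G)$.} Then $bc_ic_j$ is a triangle. Since the $2$-vertex of $C$ is neither $c_i$ nor $c_j$, it is the $C$-neighbor of one of them. So one of $c_i,c_j$ lies on a triangle and has a $2$-neighbor, contradicting Lemma~\ref{Lem:key3.1}(ii).

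\emph{Case 2: $c_i,c_j$ are opposite on $C$,} say $b\sim c_1,c_3$. Now $b$ is a $3$-vertex, because it needs a neighbor in $N(v)$ besides $c_1,c_3$; hence $N(b)=\{c_1,c_3,y\}$ with $y\in N(v)$. Let $X'=V(C)\cup\{b\}$.
\begin{itemize}
\item One of $c_2,c_4$ is the $2$-vertex of $C$ and the other, say $c_2$, is a $3$-vertex with a neighbor $z\notin X'$. Hence $\partial(X')=\{by,c_2z\}$ and $|\partial(X')|=2$.
\item $\omega_G(X')=4\cdot 3+4=16$.
\item $\gamma_{b,2}(G[X'])=1$, by assigning $2$ to $c_1$.
\item $b_1(G-X')=0$: any $C_4$-component of $G-X'$ would have at most two boundary edges, contradicting Lemma~\ref{Lem:removal-two-edges}.
\item $n_0(G-X')=0$: an isolated vertex of $G-X'$ has degree at least $2$, so it would have to be incident to both edges of $\partial(X')$, that is, equal to $y=z$. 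Then $V(G)=X'\cup\{y\}$ has only $6$ vertices, contradicting $|V(G)|\ge 10$ from Lemma~\ref{Lem:key3.1}(i).
\end{itemize}
Fact~\ref{Fact:main_inequality} then gives $2=|\partial(X')|>16-9=7$, a contradiction.

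Hence the three edges of $\partial(C)$ end at three distinct vertices of $B(v)$. The only step needing care is Case 2, specifically ruling out isolated vertices and $C_4$-components of $G-X'$ so that Fact~\ref{Fact:main_inequality} applies cleanly.
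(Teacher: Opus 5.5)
Your reduction is correct: the three edges of $\partial(C)$ leave the three $3$-vertices of $C$ and land in $B(v)$, so it suffices to rule out some $b\in B(v)$ seeing two vertices of $C$. Your Case~1 is also correct, and both agree with the paper's opening.

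\textbf{The gap is in Case~2, at the step $\gamma_{b,2}(G[X'])=1$.} Assigning $2$ to $c_1$ is a broadcast of cost $2$, not $1$. In fact $G[X']$ is $K_{2,3}$ with parts $\{c_1,c_3\}$ and $\{c_2,c_4,b\}$, which has no vertex adjacent to all others, so $\gamma_{b,2}(G[X'])=2$. With the correct value, Fact~\ref{Fact:main_inequality} gives only $2>16-18=-2$, which is no contradiction.

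This cannot be rescued by other bookkeeping on the same set. $K_{2,3}$ is itself extremal for the bound: its weight is $18=9\gamma_{b,2}(K_{2,3})$. Lemma~\ref{Lem:cutedge4} applied to the $2$-edge cut $\partial(X')$ is also satisfied. Indeed $r_1=0$, and since $9\gamma_{b,2}(G)-\scalebox{1.4}{$\omega$}(G)\in\{1,2\}$ we get $r_2\in\{2,3\}$, so $r_1+r_2\le 3=2|\partial(X')|-1$. Thus the configuration where some $b\in B(v)$ is adjacent to two opposite vertices of $C$ is left unproved. Once triangles are excluded, that configuration is the whole content of the lemma.

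\textbf{What is missing} is information on where the third attachment vertex $z$ (the outside neighbour of $c_2$) sits relative to $y$ and $v$. A set built only from $C$ and $b$ cannot outweigh a cost-$2$ broadcast. The paper's route, with $u=b$, $u'=z$, $v_1=y$, $w^*=c_2$, goes as follows.

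First it excludes $yz\in E(G)$ by applying Lemma~\ref{Lem:cutedge4} to $S=N_2[b]\setminus\{v\}$, which has $|\partial(S)|\le 2$ and induced weight $24$.

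Otherwise $z$ and $v$ have a common neighbour $v_2\neq y$, and the paper splits on whether $v_2$ lies on a triangle.
\begin{itemize}
\item If $v_2$ is not on a triangle, it applies Lemma~\ref{Lem:every}(ii) to $N_2[c_2]\setminus\{v_2\}$. The only possible obstruction, a $C_4$-component, forces a small graph with $\gamma_{b,2}(G)\le 3$, contradicting Lemma~\ref{Lem:key3.1}(i).
\item If $v_2$ is on a triangle, it applies Lemma~\ref{Lem:every}(i) to $N_2[c_1]$.
\end{itemize}
To close Case~2 you need an argument of this kind, one that uses a second neighbourhood reaching beyond $V(C)\cup\{b\}$.
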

\begin{proof}
Suppose to the contrary that there exists a $C_4$-component $C$ of $G-N_2[v]$ such that $|\partial(C)|=3$ and $C$ is adjacent at most two vertices of $B$. 
By Lemma~\ref{Lem:removal-two-edges}, $C$ is adjacent to exactly two vertices $u$ and $u'$ of $B$. 
We may assume that $u$ is adjacent to two vertices $w$ and $w'$ of $C$.
Let $w''$ be the $2$-vertex on $C$, and $w^*$ be the neighbor of $u'$ on $C$.
By Lemma~\ref{Lem:key3.1}(ii),
$u$ is not on any triangle and so
$w$ and $w'$ are not adjacent. 
We can check that $w''\in B(u)$ and $v$ is a vertex in $B(u)$ with exactly one neighbor in $N_2[u]$.
We let $v_1$ be the vertex in $N(v)$ adjacent to $u$.
If $v_1u'\in E(G)$, (see the first figure of Figure~\ref{fig:clm:C_adjacent_three- ver2})
then for the set $S=N_2[u]\setminus\{v\}$, $|\partial(S)|\leq 2$ and $\scalebox{1.4}{$\omega$}(G[S])=24$, which contradicts Lemma~\ref{Lem:cutedge4}.  
Thus $v_1u'\notin E(G)$. 
Let $v_2$ be a common neighbor of $u'$ and $v$.
Then $v_2\in B(w^*)$.
We let $Y=N_2[w^*]\setminus \{v_2\}
$.

Suppose that $v_2$ is not on a triangle.
Then $v_2$ has exactly one neighbor that is $u'$ in $N_2[w^*]$.
By the fact $\delta(G)\geq 2$, we can check $i(Y)=0$.
Suppose that $a(Y)\neq 0$. 
By Lemma~\ref{Lem:removal-two-edges}, $u'v_3\in E(G)$ (see the second figure of Figure~\ref{fig:clm:C_adjacent_three- ver2}) and there exists a vertex $x$ such that $vv_1xv_2v$ forms a $4$-cycle. 
By the symmetry of $v_2$ and $v_3$, it follows that  $v_3x\in E(G)$.
Then $G$ is determined with $\gamma_{b,2}(G)\leq 3$ by assigning $2$ to $u$ and $1$ to $u'$, which contradicts Lemma~\ref{Lem:key3.1}(i). Thus $a(Y)=0$, which contradicts Lemma~\ref{Lem:every}(ii).

\begin{figure}\centering
\includegraphics[page=8, width=13.5cm]{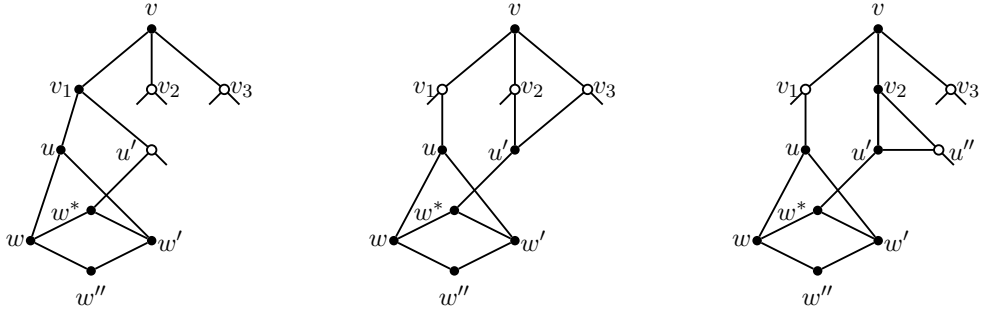}
\caption{Illustration of graphs in Lemma~\ref{Lem:C_adjacent_three-ver2}} \label{fig:clm:C_adjacent_three- ver2}
\end{figure}

Now suppose that $v_2$ is on a triangle.
We let $u''$ be the common neighbor of $v_2$ and $u'$ and see the third figure of Figure~\ref{fig:clm:C_adjacent_three- ver2}. Then consider $Z=N_2[w]$. Since $P:u''v_2vv_3$ is a path in $G-Z$ and there are three edges between $P$ and $Z$.
If $a(Z)\neq 0$, then $vv_2u''v_3$ is a $4$-cycle and so $v_3$ is a $2$-vertex, which contradicts Lemma~\ref{Lem:key3.1}(ii).
Thus $a(Z)=0$. 
By the fact $\delta(G)\ge 2$, 
we can check $i(Z)=0$, which contradicts Lemma~\ref{Lem:every}(i).  
\end{proof}

\begin{Fact}\label{observation:r}
For a $3$-vertex $v$, let $X=N_2[v]$, and then 
\[
\gamma_{b,2}(G[\mathcal{C}(X)])  \le \min\{2a+i+2,a_3+p_3+3\}.
\]
Moreover, if there is a vertex $u \in B(v)$ such that $u$ is adjacent to two distinct $\zeta_1,\zeta_2\in I(X)\cup A(X)$, then
\[ \gamma_{b,2}(G[\mathcal{C}(X)])  \le 2a+i+1.\]
\end{Fact}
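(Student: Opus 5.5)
We construct explicit 2-LD broadcasts on $G[\mathcal{C}(X)]$, where $X=N_2[v]$ and $\mathcal{C}(X)=X\cup\bigcup_{C\in A(X)}V(C)\cup I(X)$, one for each of the three bounds. Throughout, every isolated vertex $w\in I(X)$ and every $C_4$-component $C\in A(X)$ has a neighbor in $B(v)$: the only edges leaving them go into $X$, and they are not adjacent to $N[v]$ since they lie outside $N_2[v]$. In particular every vertex of $\mathcal{C}(X)$ is within distance $4$ of $v$, and every vertex of $\mathcal{C}(X)\setminus X$ is within distance $2$ of some vertex of $N(v)$.

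\emph{First bound, $2a+i+2$.} Put $f(v)=2$, which covers $X=N_2[v]$. For each $w\in I(X)$ put $f(w)=1$. For each $C\in A(X)$ put value $2$ on one vertex of $C$; any vertex of a $4$-cycle is within distance $2$ of the whole cycle. The cost is $2+i+2a$.

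\emph{Second bound, $a_3+p_3+3$.} Let $N(v)=\{v_1,v_2,v_3\}$. Assign $2$ to each $3$-neighbor $v_j$ of $v$ and $1$ to each $2$-neighbor. The cost is $2p_3+p_2=p_3+3$, and this already covers $N_2[v]$ together with every vertex of $I(X)$ and of each $C$ at distance $\le2$ from some $v_j$ with $f(v_j)=2$. A $2$-neighbor $v_j$ receives only value $1$, but its unique other neighbor $u\in B(v)$ is still heard. We then treat what is left. An isolated vertex $w$ is adjacent to some $u\in B(v)$, and $u$ is adjacent to a neighbor of $v$. If that neighbor is a $3$-vertex, $w$ is at distance $2$ from it and is covered. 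The delicate case is when every $B$-neighbor of $w$ hangs off a $2$-neighbor of $v$. We handle it by counting: since $\beta_2+p_2\le3$ and each $2$-neighbor has only one $B$-neighbor, raising that $2$-neighbor's value from $1$ to $2$ costs one extra unit. This raise is charged against the replaced term, so it must be checked rather than assumed. A $C_4$-component $C$ with $|\partial(C)|=4$ has at least two distinct neighbors in $B$ (Lemma~\ref{Lem:removal-two-edges}), and one expects it to be within distance $2$ of a $2$-valued $v_j$. For $C$ with $|\partial(C)|=3$, Lemma~\ref{Lem:C_adjacent_three-ver2} gives three distinct $B$-neighbors, and we spend one extra unit on $C$, giving the $a_3$ term. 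The main obstacle is verifying that $4$-cycle components with four boundary edges and the isolated vertices are always reached at distance $2$ from a $3$-neighbor of $v$, or are absorbed by the raise. If they are not, I would instead place the $2$'s on suitable vertices of $B$ and redo the count.

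\emph{Moreover part.} If $u\in B(v)$ is adjacent to two distinct members $\zeta_1,\zeta_2$ of $I(X)\cup A(X)$, we modify the first construction. Put $f(u)=2$ and $f(v)=1$; together these cover $N[v]$, all of $B$ (since $u\in B$), $\zeta_1$, $\zeta_2$, and what remains. When $\zeta_j$ is a $4$-cycle, $u$ is adjacent to one of its vertices, so all of the cycle is within distance $2$ of $u$ only if $u$ has at least two neighbors on it. Otherwise we keep value $2$ inside that cycle and instead drop the $1$ or $2$ previously charged to the other $\zeta$. In each subcase the total is at most $2a+i+1$.
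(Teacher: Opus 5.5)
Your first bound, $2a+i+2$, is correct and is the paper's construction. The other two parts have genuine gaps.

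\textbf{The bound $a_3+p_3+3$.} The trouble is that you put the unit for a $2$-neighbor $v_j$ of $v$ on $v_j$ itself. The other neighbor $u_j\in B(v)$ of $v_j$ is then heard. But the neighbors of $u_j$ in $I(X)$ or on $C_4$-components of $G-X$ are at distance $2$ from $v_j$, and they need not be within distance $2$ of any $3$-neighbor of $v$, so they can go unheard. Your proposed repair, raising $f(v_j)$ to $2$, costs $2p_3+2p_2+a_3$, which exceeds the bound whenever $p_2\ge 1$. You leave this open, so the bound is not proved.

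The missing idea is to put that $1$ on $u_j$ instead, as the paper does. Every isolated vertex of $G-X$ has its neighbors in $B(v)$, and every $3$-vertex on a $C_4$-component of $G-X$ has its neighbor outside the component in $B(v)$. Such a vertex $b\in B(v)$ is either adjacent to a $3$-neighbor of $v$ carrying $2$, or all its neighbors in $N(v)$ are $2$-vertices, in which case $b=u_j$ carries $1$. Either way the vertex is heard, and $N[v]\cup B(v)$ is heard as well. The only vertices left are the $2$-vertices of $C_4$-components $C$ with $|\partial(C)|=3$, exactly one per such component by Lemma~\ref{Lem:removal-two-edges}. Giving each of them a $1$ yields cost $2p_3+p_2+a_3=a_3+p_3+3$.

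These $2$-vertices are also exactly where your opening claim fails, namely that all of $\mathcal{C}(X)\setminus X$ lies within distance $2$ of $N(v)$. Lemma~\ref{Lem:C_adjacent_three-ver2} is not needed here, and it assumes $v\in V^t$ in any case.

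\textbf{The ``moreover'' part.} Lowering $f(v)$ to $1$ destroys coverage. With $f(v)=1$, $v$ reaches only $N[v]$, and $f(u)=2$ reaches only $N_2[u]$. So a vertex of $B(v)$ not adjacent to $u$'s neighbor in $N(v)$ need not be heard, and your assertion that this covers all of $B$ ``since $u\in B$'' is false.

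The count fails too. Since $u$ has a neighbor in $N(v)$ and one in each of $\zeta_1,\zeta_2$, and $\Delta(G)=3$, $u$ has exactly one neighbor on each $\zeta_k$. Hence $u$ never covers a whole $C_4$-component, and your ``otherwise'' branch is always taken for a $C_4$. Your scheme then costs $2a+i+2$ with one $C_4$, or $2a+i+3$ with two.

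What you need is to keep $f(v)=2$ and use $u$ as a relay:
\begin{itemize}
\item If $\zeta_1,\zeta_2$ are both isolated vertices, replace their two $1$'s by a single $1$ on $u$.
\item If $\zeta_1$ is a $C_4$-component, put its $2$ on the vertex $x\in\zeta_1$ adjacent to $u$. Then the neighbor of $u$ in $\zeta_2$ hears $x$ at distance $2$. An isolated $\zeta_2$ then needs nothing. If $\zeta_2$ is a $C_4$-component, it needs only a $1$ on the vertex antipodal to $u$'s neighbor.
\end{itemize}
Each case saves exactly one unit against $2a+i+2$, giving $2a+i+1$.
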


\begin{proof}
For simplicity, let $r=\gamma_{b,2}(G[\mathcal{C}(X)])$. By Lemma~\ref{Lem:removal-two-edges}, the number of $2$-vertices of $C_4$-components in $G-X$ is $a_3$. 
We have $r\le 2p_3+p_2+a_3 = 3+p_3+a_3$, by assigning $2$ to  each of the $3$-neighbors of $v$, $1$ to the neighbor of the $2$-neighbor of $v$ other than $v$, and each $2$-vertex on a $C_4$-component in $G-X$. 
On the other hand, $r \le  2+2a+i$ 
by assigning $2$ to $v$ and one vertex of each $C_4$-component $C$ of $G-X$, $1$ to each isolated vertex in $G-X$.

To show the `moreover' part, 
let $f$ be a function defined on $\mathcal{C}(X)$ by assigning $2$ to each of $v$ and one vertex on each $C_4$-component of $G-X$ other than $\zeta_1,\zeta_2$ and $1$ to all isolated vertices of $G-X$ other than $\zeta_1,\zeta_2$, and $0$ to the other vertices. 

If $\zeta_1$ and $\zeta_2$ are isolated vertices of $G-X$, then $r\leq 2+2a+(i-2)+1=2a+i+1$ from $f$ by assigning $1$ to $u$. Suppose that $\zeta_1$ is a $C_4$-component of $G$.
Let $x$ be a vertex on $\zeta_1$ adjacent to $u$. 
If $\zeta_2$ is an isolated vertex of $G-X$, then $r\le {\rm cost}(f) +2 = 2+2(a-1)+(i-1)+2=2a+i+1$ from $f$ by assigning $2$ to $x$. If $\zeta_2$ is a $C_4$-component, 
then $r\le {\rm cost}(f)+2+1= 2+2(a-2)+i+2+1=2a+i+1$ from $f$ by assigning $2$ to $x$ and $1$ to $y$, where $y$ is the vertex on $\zeta_2$ that is not adjacent to $N_2[x]$.
\end{proof}

The following remark follows immediately from the proof of the
above fact. 

\begin{Rmk}\label{rmk:r}
By the same reason of the `moreover' part of Fact~\ref{observation:r},
if there are two vertices $u_1$ and $u_2$ in $B$ such that 
$u_1$  is adjacent to $\zeta_1,\zeta_2$ and $u_2$  is adjacent to $\zeta_3,\zeta_4$
for some distinct $\zeta_1,\zeta_2,\zeta_3,\zeta_4\in I(X)\cup A(X)$,  $ \gamma_{b,2}(G[\mathcal{C}(X)])  \le 2+2a+i-2=2a+i$.
\end{Rmk}

Now we are ready to prove Lemma~\ref{Lem:possible:cases}.

\begin{proof}[Proof of Lemma~\ref{Lem:possible:cases}]  Note that $V^*\subset V^t$.  By Lemma~\ref{Lem:preview}(i), 
\begin{equation}\label{eq:Lem:possible:cases:3} 3a+2i\le 3a_3+4a_4+2i_2+3i_3\leq |\partial(X)| \le   10-\beta_2 -2\ell \le 10
\end{equation}
where the second inequality holds by considering the $C_4$-components or the isolated vertices of $G-X$.
By Lemma~\ref{Lem:every}, $a+i>0$ and so 
 $\mathcal{C}(X)\neq X$.  
Recall that  $\beta_2+p_2\le 3$.

\begin{Clm}\label{claim:r_inequality} 
The following hold:
\begin{itemize}
\item[\rm(i)]  $a\le 2$, $i_2\le 2$, $i\le 3$,   $\beta_2+2i_2\le \beta$, $a+i\le 4$.  
Moreover, if $a+i=4$, then $\beta_2=\ell=0$, $|\partial(X)|=10$, $\beta=5$, $a=a_3\in \{1,2\}$, $i_2=2$, $p_2=1$, and $V(G)=\mathcal{C}(X)$.
\item[\rm(ii)] $r\le 7$.
\item[\rm(iii)] $9r> 16a+6i+2(\beta_2+\ell)+18.$
\end{itemize}
\end{Clm}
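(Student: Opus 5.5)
I would prove the three parts in the order (iii), then (i), then (ii), since (iii) is used in the others.

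For (iii), I would apply the `moreover' part of Fact~\ref{Fact:main_inequality} to $X=N_2[v]$. It gives
\[
|\partial(\mathcal{C}(X))|-q>\omega_G(\mathcal{C}(X))-9r ,
\]
with $q\ge 0$. Rearranging yields
\[
9r>\omega_G(\mathcal{C}(X))-|\partial(\mathcal{C}(X))|+q\ge \omega_G(\mathcal{C}(X))-|\partial(\mathcal{C}(X))|.
\]
Substituting Lemma~\ref{Lem:preview}(ii) then gives (iii) directly.

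For (i), I would combine \eqref{eq:Lem:possible:cases:3} with the bound $r\le \min\{2a+i+2,\,a_3+p_3+3\}$ from Fact~\ref{observation:r}.

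\begin{itemize}
\item \textbf{Bounds on $a$ and $i$.} The inequality $3a+2i\le 10$ gives $a\le 3$ and $i\le 5$. To sharpen these I would insert $r\le 2a+i+2$ into (iii), which gives $18a+9i+18>16a+6i+2(\beta_2+\ell)+18$, that is, $2a+3i>2(\beta_2+\ell)$. This is only weak, so the real constraint comes from counting edges.
\item \textbf{Edge counting into $B$.} Each isolated $2$-vertex of $G-X$ has two neighbours in $B$, and these are distinct because $G$ has no $4$-cycle with a chord and no triangle at a $2$-vertex. Each $C_4$-component $C$ with $|\partial(C)|=3$ hits three distinct vertices of $B$ by Lemma~\ref{Lem:C_adjacent_three-ver2}. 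Vertices of $B$ adjacent to a $2$-vertex must themselves be $3$-vertices by Lemma~\ref{Lem:key3.1}(iii), which yields $\beta_2+2i_2\le\beta\le 5$ and hence $i_2\le 2$.
\item \textbf{$i\le 3$ and $a\le 2$.} Since $i_3$-vertices use three edges each, $2i_2+3i_3\le 10$ together with $i_2\le 2$ gives $i\le 3$ after a short check. The bound $a\le 2$ follows from $3a\le 10$ and the fact that $a=3$ forces $i=0$ with $|\partial(X)|\ge 9$.
\item \textbf{The extremal case $a+i=4$.} Here \eqref{eq:Lem:possible:cases:3} gives $3a+2i\le 10$, so $a\le 2$, and all inequalities are nearly tight. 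I expect $a=a_3$, $i_2=2$, $|\partial(X)|=10$, $\beta_2=\ell=0$, $\beta=5$ and $p_2=1$ (from $\beta=3+p_3=5$). Tightness also gives $|\partial(\mathcal{C}(X))|=0$, so $V(G)=\mathcal{C}(X)$. The cases $a=0,i=4$ and $a=3$ must be excluded by the edge count: $i=4$ would need $i_3\ge 2$, hence $\ge 10$ edges, which is too many together with the $i_2$ edges.
\end{itemize}

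For (ii), I would use $r\le a_3+p_3+3\le 2+2+3=7$, since $a\le 2$ and $p_3\le 3$. If that is off by one when $p_3=3$, I would use instead $r\le 2a+i+2$ with $a+i\le 4$, which gives at most $2a+i+2\le 8$, and cut the remaining case using the extremal description in (i).

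I expect the main obstacle to be the extremal part of (i). Pinning down every listed equality requires carefully tracking which edges of $\partial(X)$ go to which components, and using distinctness of neighbours, which comes from induced $4$-cycles and triangle-free neighbourhoods of $2$-vertices.
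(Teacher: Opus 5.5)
Your proof of (iii) is the paper's argument. The gap is in (i). Edge counting cannot exclude $a=3$ or $i=4$, because both configurations fit within the budget of \eqref{eq:Lem:possible:cases:3}.

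\begin{itemize}
\item For $i=4$: the pair $(i_2,i_3)=(2,2)$ satisfies $i_2\le 2$ and $2i_2+3i_3=10\le 10-\beta_2-2\ell$ when $a=\beta_2=\ell=0$. So your ``short check'' giving $i\le 3$ is wrong. Also, $i=4$ does not need ``too many'' edges; it uses exactly the ten available.
\item For $a=3$: three $C_4$-components with $|\partial(C)|=3$ need only $9\le 10$ edges. So ``$a=3$ forces $i=0$ with $|\partial(X)|\ge 9$'' is not a contradiction.
\end{itemize}

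As a result, your extremal paragraph has no valid reason for $a\ge 1$, that is, for $a\in\{1,2\}$.

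The missing ingredient is the mod-$9$ congruence of Lemma~\ref{Lem:preview}(iii), which encodes Lemmas~\ref{Lem:existence_deg_1_2} and~\ref{Lem:cutedge4}. In both bad cases the budget is exhausted, so
\[
|\partial(\mathcal{C}(X))|=|\partial(X)|-3a_3-4a_4-2i_2-3i_3\le 1,
\]
and therefore $7a+6i+2(\beta_2+\ell)\equiv 7,8 \pmod 9$. This fails in both cases:
\begin{itemize}
\item For $a=3$ you get $i=0$, $\ell=0$, $\beta_2\le 1$, and the value $21+2\beta_2\equiv 3$ or $5$.
\item For $i_2=i_3=2$ you get the value $24\equiv 6$.
\end{itemize}
The same congruence, in the form $a+24\equiv 7,8$, is what yields $a\in\{1,2\}$ when $a+i=4$.

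Alternatively, you could rule out these two cases by combining (iii) with the \emph{moreover} part of Fact~\ref{observation:r} and with Remark~\ref{rmk:r}, using a pigeonhole count and Lemma~\ref{Lem:C_adjacent_three-ver2}. That argument would have to be carried out, and your proposal does not set it up.

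Once $a\le 2$ and $i\le 3$ are established, $a+i\le 4$ follows from
\[
3a+3i-2\le 3a+2i_2+3i_3\le 10 .
\]
You never state this step, though it is easy. Your tightness analysis of the case $a+i=4$ is then fine.

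Two smaller points:
\begin{itemize}
\item In (ii), $a_3+p_3+3\le 2+3+3=8$, not $7$. Your fallback is the paper's argument: $r\le a+(a+i)+2\le 8$, and $r=8$ forces $a=2$ and $a+i=4$, hence $a_3=2$, $p_3=2$ and $r\le 7$. But it relies on $a\le 2$ and on the extremal description, so it inherits the gap above.
\item Distinct isolated $2$-vertices of $G-X$ have disjoint neighbourhoods because of Lemma~\ref{Lem:key3.1}(iii): no $3$-vertex has two $2$-neighbours. The reason is not the absence of chorded $4$-cycles or of triangles at $2$-vertices.
\end{itemize}
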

\begin{proof}
\noindent  (i): By \eqref{eq:Lem:possible:cases:3}, we have $3a\le 10$ and so $a\le 3$.
If $a=3$, then it follows from $3a+2i\le 10-\beta_2-2\ell$ that $i=0$ and $\beta_2\le 1$, $\ell=0$, and 
so $|\partial(\mathcal{C}(X) )|\le 1$, which contradicts Lemma~\ref{Lem:preview}(iii). Thus $a\le 2$. 

Since $\beta \leq 5$, $i_2\leq 2$ by Lemma~\ref{Lem:key3.1}(iii).
By \eqref{eq:Lem:possible:cases:3},  $2i_2+3i_3\leq 10$, and so $i\le 4$.
If $i=4$, then $i_2=i_3=2$ and $a=\beta_2=\ell=0$  by \eqref{eq:Lem:possible:cases:3}, and so $|\partial(\mathcal{C}(X) )|\le 1$, which contradicts Lemma~\ref{Lem:preview}(iii). Thus $i\le 3$. 

By Lemma~\ref{Lem:key3.1}(iii), it holds that $\beta_2+2i_2\le \beta$. Since $i_2\le 2$, it holds from \eqref{eq:Lem:possible:cases:3} that
\[ 3a+3i-2\le 3a+3i-i_2 = 3a+2i_2+3i_3 \le |\partial(X)|\le 10-\beta_2-2\ell.\]
Then $3a+3i\le 12$ and so $a+i\le 4$. 

Now suppose $a+i=4$.
Then we can check that $\beta_2=\ell=0$, $|\partial(X)|=10$, $\beta=5$, $a=a_3$, $i_2=2$, $p_2=1$, and $G=G[\mathcal{C}(X)]$ by \eqref{eq:Lem:possible:cases:3}.
Thus, Lemma~\ref{Lem:preview}(iii)  implies that 
$a+24\equiv 7,8 \pmod{9}$. Then $a\in \{1,2\}$ and so the  `moreover' part also holds. Thus (i) holds.

\noindent (ii): By Fact~\ref{observation:r},  $r\le \min\{2a+i+2, a_3+p_3+3\}\le a+(a+i)+2\le 2+4+2=8$ where the last inequality holds by (i). 
Suppose to the contrary that $r=8$. Then  $a=2$ and $a+i=4$.   
By the `moreover' part of (i),   $a=a_3$
and $p_2=1$ and so $p_3=2$. Then $r\leq a_3+p_3+3=7$ by Fact~\ref{observation:r}, a contradiction.   
 
\noindent (iii):  By Fact~\ref{Fact:main_inequality},  $9\gamma_{b,2}(G[\mathcal{C}(X)]) >  \omega_G(\mathcal{C}(X))-|\partial(\mathcal{C}(X))|$, and equivalently,
by Lemma~\ref{Lem:preview}(ii), $9r > 16a+6i+2(\beta_2+\ell)+18.$ Thus (iii) holds. 
\end{proof}
We have $r\ge 3$ by Claim~\ref{claim:r_inequality}(iii).
All cases in which $a, r, \beta_2, \ell$ satisfy  Fact~\ref{observation:r} and Claim~\ref{claim:r_inequality} are listed  in Table~\ref{table1}.
  \begin{table}[h!]
\centering\footnotesize{  
\begin{tabular}{c||c|c|c|c|c}
 $(a,i)$    & $r=3$     & $r=4$ & $r=5$ & $r=6$ & $r=7$ \\ \hline  \hline
 $(0,1)$ &  $\beta_2+\ell \le 1 $ $(\S)$  &  \cellcolor{red!6}   &   \cellcolor{red!6}    &   \cellcolor{red!6}   &   \cellcolor{red!6}          \\   
 $(0,2)$ &  \cellcolor{blue!6}    &  $\beta_2+\ell \le  2$ $(\S)$   &  \cellcolor{red!6}      &   \cellcolor{red!6}   &   \cellcolor{red!6}       \\
 $(0,3)$ &   \cellcolor{blue!6} & \cellcolor{blue!6}   &   $\beta_2+\ell \le 4$   &  \cellcolor{red!6}    &   \cellcolor{red!6}        \\  
 $(1,0)$ &   \cellcolor{blue!6}  &   $\beta_2=\ell=0$   &  \cellcolor{red!6}      &   \cellcolor{red!6}      &   \cellcolor{red!6}      \\
 $(1,1)$ &  \cellcolor{blue!6}   &  \cellcolor{blue!6}  &    $\beta_2+\ell\le 2$   &   \cellcolor{red!6}    &   \cellcolor{red!6}      \\
 $(1,2)$ &  \cellcolor{blue!6}  &   \cellcolor{blue!6} &  \cellcolor{blue!6}    &    $\beta_2+\ell\le 3$  &   \cellcolor{red!6}       \\ 
  $(1,3)$ & \cellcolor{blue!6}    &  \cellcolor{blue!6}  &    \cellcolor{blue!6}  &    $\beta_2=\ell=0$    &  $\beta_2+\ell\le 5$         \\   
 $(2,0)$  &   \cellcolor{blue!6}  &  \cellcolor{blue!6}  &    \cellcolor{blue!6} &  $\beta_2+\ell\le 1$    &  \cellcolor{red!6}         \\
 $(2,1)$   & \cellcolor{blue!6}    &  \cellcolor{blue!6}  & \cellcolor{blue!6}   &    \cellcolor{blue!6}    &   $\beta_2+\ell\le 3$    \\   
 $(2,2)$   &  \cellcolor{blue!6}   &   \cellcolor{blue!6} &    \cellcolor{blue!6}  &    \cellcolor{blue!6}    &   $\beta_2=\ell=0$     \\   
\end{tabular}}
\caption{The cells shaded in red correspond to the cases that do not satisfy $r \le 2 + 2a + i$, and they are impossible by Fact~\ref{observation:r}, and the cells shaded in blue correspond to the cases that do not satisfy $9r> 16a+6i+2(\beta_2+\ell)+18$ and so they are impossible by Claim~\ref{claim:r_inequality}(iii). 
The conditions of the cases corresponding to the unshaded cells  are obtained by Claim~\ref{claim:r_inequality}(iii).} 
\label{table1}
\end{table}

First, we will show that $r\le 4$, that is, $v$ is not in the following cases: 
\[ (Q_{6,1,3})   \quad    (Q_{7,1,3}) \quad 
(Q_{7,2,2}) \quad    (Q_{5,0,3})  \quad(Q_{6,1,2})  
 \quad (Q_{7,2,1})   \quad (Q_{6,2,0})\quad (Q_{5,1,1}).  \]

\begin{Clm}\label{clm:a+i<=2}It holds that $a+i\le 2$. 
Thus, $v$ cannot be in one of six cases $(Q_{6,1,3})$, $(Q_{7,1,3})$, $(Q_{7,2,2})$, $(Q_{5,0,3})$, $(Q_{6,1,2})$, and $(Q_{7,2,1})$.
\end{Clm}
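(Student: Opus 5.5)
We argue by contradiction and assume $a+i\ge 3$. The table then leaves only the six listed cases, with $(a,i)\in\{(0,3),(1,2),(1,3),(2,1),(2,2)\}$. The ``thus'' part of the claim is immediate once $a+i\le 2$ is known, because each of $(Q_{6,1,3})$, $(Q_{7,1,3})$, $(Q_{7,2,2})$, $(Q_{5,0,3})$, $(Q_{6,1,2})$, $(Q_{7,2,1})$ has $a+i\ge 3$. The strategy in each case is to show that the value of $r$ required by the table is not attained. We do this by producing a cheaper $2$-LD broadcast of $G[\mathcal{C}(X)]$ with Fact~\ref{observation:r} and Remark~\ref{rmk:r}, and then comparing with Claim~\ref{claim:r_inequality}(iii).

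\textbf{Counting.} By \eqref{eq:Lem:possible:cases:3} we have $3a_3+4a_4+2i_2+3i_3\le |\partial(X)|\le 10-\beta_2-2\ell$, and $\beta\le 5$. Each isolated vertex of $G-X$ sends at least $2$ edges into $B$, and by Lemma~\ref{Lem:C_adjacent_three-ver2} each $C_4$-component sends at least $3$ edges to distinct vertices of $B$. So at least $3a+2i\ge 7$ edge-ends land in a set $B$ of at most $5$ vertices. Each vertex of $B$ has at most $2$ edges leaving $X$, since it has a neighbor in $N(v)$. By pigeonhole, some vertex $u\in B$ is adjacent to two distinct elements $\zeta_1,\zeta_2\in I(X)\cup A(X)$. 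Then the ``moreover'' part of Fact~\ref{observation:r} gives $r\le 2a+i+1$. This alone rules out every cell lying on the diagonal $r=2a+i+2$, namely $(Q_{5,0,3})$, $(Q_{6,1,2})$, $(Q_{7,1,3})$, $(Q_{7,2,1})$ and $(Q_{7,2,2})$.

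\textbf{The case $(Q_{6,1,3})$.} Here $r=6=2a+i+1$, so one good vertex $u$ is not enough and we need two disjoint pairs. The edge-ends number at least $3+2\cdot 3=9$, while the edges leaving $X$ number at most $10$ (with $\beta_2=\ell=0$). So almost every vertex of $B$ is saturated by edges to $I(X)\cup A(X)$. The aim is to find $u_1,u_2\in B$ adjacent to four distinct $\zeta$'s and then apply Remark~\ref{rmk:r}, which gives $r\le 2a+i=5$, a contradiction. For the other cases, where we only have $r\le 2a+i+1$, Claim~\ref{claim:r_inequality}(iii) becomes violated once $\beta_2,\ell$ are bounded as in the table. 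Alternatively, we can use the bound $r\le a_3+p_3+3$ when $a_3$ is small.

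\textbf{Main obstacle.} The hard step is the fine pigeonhole when $a+i=4$. In that situation Claim~\ref{claim:r_inequality}(i) forces $|\partial(X)|=10$, $\beta=5$, $i_2=2$, $p_2=1$, $a=a_3$ and $V(G)=\mathcal{C}(X)$, so the whole graph is determined up to a few adjacency choices. There I would first enumerate how the two $2$-vertices of $I(X)$ and the component(s) in $A(X)$ attach to the five vertices of $B$. In each configuration I would exhibit an explicit broadcast of cost at most $r-1$, for example $2$ on $v$ plus a $2$ on a vertex of $B$ covering two components. If some configuration resists this, the fallback is the parity argument of Lemma~\ref{Lem:preview}(iii).
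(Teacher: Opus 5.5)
Your overall strategy is the paper's: a pigeonhole on $B$, the `moreover' part of Fact~\ref{observation:r} for the cells with $r=2a+i+2$, and Remark~\ref{rmk:r} where one pair is not enough. This disposes of $(Q_{5,0,3})$, $(Q_{6,1,2})$, $(Q_{7,2,1})$ and $(Q_{7,1,3})$; for the last one, your single pair giving $r\le 6$ is simpler than the paper's treatment.

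One small repair is needed in the pigeonhole. Count distinct incidences rather than edge-ends: two edges from one $u$ may go to the same $C_4$-component, and $3a+2i=6$ at $(0,3)$. Every $\zeta\in I(X)\cup A(X)$ has at least two distinct neighbors in $B$, by $\delta(G)=2$ and Lemma~\ref{Lem:removal-two-edges}. This gives at least $2(a+i)\ge 6>5\ge|B|$ incidences, which is what you need.

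There is, however, a genuine gap. $(Q_{7,2,2})$ is \emph{not} on the diagonal. For $(a,i)=(2,2)$ one has $2a+i+2=8$, which is already excluded by Claim~\ref{claim:r_inequality}(ii), and the table cell has $r=7=2a+i+1$. So one vertex adjacent to two $\zeta$'s yields only $r\le 7$. Thus $(Q_{7,2,2})$ is in the same position as $(Q_{6,1,3})$: both need two vertices of $B$ adjacent to four distinct $\zeta$'s. For these two cells you only state this as an aim, and none of your fallbacks works:
\begin{itemize}
\item With $\beta_2=\ell=0$, Claim~\ref{claim:r_inequality}(iii) reads $54>52$ resp.\ $63>62$, so $r\le 2a+i+1$ does not violate it.
\item The bound $r\le a_3+p_3+3$ gives $6$ resp.\ $7$, since $a=a_3$ and $p_3=2$.
\item Lemma~\ref{Lem:preview}(iii) is satisfied, as $25\equiv 7$ and $26\equiv 8 \pmod 9$.
\end{itemize}

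The missing idea is a rigid structural description when $a+i=4$. Let $u_1$ be the neighbor of the $2$-neighbor $v_1$ of $v$, and let $w_1,w_2$ be the two $2$-vertices in $I(X)$. By Lemma~\ref{Lem:key3.1}(iii) no $3$-vertex has two $2$-neighbors, so $u_1\notin N(w_1)\cup N(w_2)$ and $N(w_1)\cap N(w_2)=\emptyset$. Hence $N(w_1)=\{u_2,u_3\}$ and $N(w_2)=\{u_4,u_5\}$ partition $B\setminus\{u_1\}$. Since $|\partial(X)|=10$ and every vertex of $B$ sends exactly two edges out of $X$, each of $u_2,\dots,u_5$ has exactly one further edge to the remaining $\zeta$'s, and $u_1$ has two.

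In $(Q_{6,1,3})$, let $w$ be the $3$-vertex of $I(X)$; the count forces $u_1$ to be adjacent to $C$.
\begin{itemize}
\item If $u_1w\in E(G)$, assign $1$ to $v,w_1,w_2,u_1$ and to the vertex of $C$ opposite the neighbor of $u_1$. This shows $r\le 5$.
\item Otherwise, up to symmetry, $N(w)=\{u_2,u_3,u_4\}$ and $u_5$ is adjacent to $C$. The pairs $\{w_1,w\}$ at $u_2$ and $\{w_2,C\}$ at $u_5$ give $r\le 5$ by Remark~\ref{rmk:r}.
\end{itemize}

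In $(Q_{7,2,2})$, $a=a_3$, so by Lemma~\ref{Lem:C_adjacent_three-ver2} each of $C,C'$ meets three distinct vertices of $B$. Hence $u_1$ meets both components, and $C$ and $C'$ split $\{u_2,\dots,u_5\}$ two and two. One can then always choose $u_j\in N(w_1)$ adjacent to one component and $u_k\in N(w_2)$ adjacent to the other, and Remark~\ref{rmk:r} gives $r\le 6$. Without this partition of $B\setminus\{u_1\}$ and the tight edge count, your proposed enumeration has no mechanism guaranteeing the disjoint pairs.
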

\begin{proof}
To the contrary, suppose $a+i>2$.
Then by Claim~\ref{claim:r_inequality}(i) $a+i=3$ or $4$. 
Suppose that $a+i=4$. 
Thus it is one of the cases $(Q_{6,1,3})$, $(Q_{7,1,3})$ and $(Q_{7,2,2})$.
By the `moreover' part of Claim~\ref{claim:r_inequality}(i),
$\beta_2=\ell=0$, $|\partial(X)|=10$, $\beta=5$, $a=a_3\in \{1,2\}$, $i_2=2$, $p_2=1$, and $V(G)=\mathcal{C}(X)$.
Let 
$u_1$ be the neighbor of the $2$-neighbor of $v$ with $u_1\neq v$ and we also let $w_1$ and $w_2$ be the $2$-vertices that are isolated in $G-X$.
By Lemma~\ref{Lem:key3.1}(iii),
$N(w_1)\cup N(w_2)=B\setminus\{u_1\}$ and $N(w_1)\cap N(w_2)=\emptyset$.
We let $N(w_1)=\{u_2,u_3\}$ and $N(w_2)=\{u_4,u_5\}$.

We consider the case $(Q_{6,1,3})$ or $(Q_{7,1,3})$. 
Let $C$ be the $C_4$-component in $G-X$.
Since $i_2=2$, $i_3=1$.
Let $w$ be the $3$-vertex that is isolated vertex in $G-X$.
Since $N(w_1)\cup N(w_2)=B\setminus\{u_1\}$ and $|E(B\setminus\{u_1\}, \mathcal{C}(X)\setminus X)|\le 8$, $u_1$ is adjacent to $C$. If $u_1w\in E(G)$, then $r\leq 5$ by assigning $1$ to $v,w_1,w_2,u_1,x$ where $x$ is a vertex on $C$ properly, a contradiction. 
Thus $u_1w\notin E(G)$, and so we may let $N(w)=\{u_2,u_3,u_4\}$.
Then $u_5$ is adjacent to a vertex $z$ in $C$.
Hence $r\leq 2a+i=5$ by Remark~\ref{rmk:r}, a contradiction.

We consider the case $(Q_{7,2,2})$. 
Let $C$ and $C'$ be the $C_4$-components in $G-X$.
Since $a=a_3$, $|\partial(C)|=|\partial(C')|= 3$. 
Thus, we may assume that $u_2$ is adjacent to $V(C)$ and 
$u_4$ is adjacent to $V(C')$.
Then $r\leq 2a+i=6$ by Remark~\ref{rmk:r}, a contradiction.
Thus the case $(Q_{7,2,2})$ is  impossible.

It remains to show that  $(Q_{5,0,3})$, $(Q_{6,1,2})$, $(Q_{7,2,1})$ are impossible. 
Since $a+i\ge3$, and so $2a+2i\ge 6 > 5 \geq|B|$ and so there exists a vertex in $B$ adjacent to two elements $\zeta_1$ and $\zeta_2$, where  $\zeta_1,\zeta_2\in I(X)\cup A(X)$. Thus the cases $(Q_{5,0,3})$, $(Q_{6,1,2})$, $(Q_{7,2,1})$ are impossible, by the `moreover' part of Fact~\ref{observation:r}.
\end{proof}

To complete the proof to show that $r\le 4$, it remains to show that the two cases $(Q_{6,2,0})$, $(Q_{5,1,1})$ 
are impossible by Claim~\ref{clm:a+i<=2}. Suppose that $v$ is in the case $(Q_{6,2,0})$ or  $(Q_{5,1,1})$. 
Suppose that there exists a vertex in $B$ adjacent to two elements $\zeta_1$ and $\zeta_2$, where  $\zeta_1,\zeta_2\in I(X)\cup A(X)$.
 Thus, by the `moreover' part of Fact~\ref{observation:r},  we have $r\le 2a+i+1$. Therefore, the two cases are impossible. 
Hence each vertex in $B$ is adjacent to at most one $\zeta\in I(X)\cup A(X)$.
By Lemma~\ref{Lem:preview}(i), $|\partial(X)|\ge 5$ and so $\beta \ge 3$.
Note that $|V(G)|\ge   |N[v]|+\beta + 4a+i \ge  4+\beta+5\ge 12$.
Let $C$ be a $C_4$-component of $G-X$.  

\begin{Clm}\label{clm:C_adjacent_three}
$C$ is adjacent to at least three vertices of $B$. 
\end{Clm}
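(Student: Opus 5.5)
We argue by contradiction, assuming $C$ is adjacent to at most two vertices of $B$. Since $C$ is a $C_4$-component of $G-X$ and $N(V(C))\subset B$, Lemma~\ref{Lem:removal-two-edges} gives $|\partial(C)|\ge 3$. So $C$ has at least three outgoing edges, and they land in at most two vertices of $B$. If $|\partial(C)|=3$, Lemma~\ref{Lem:C_adjacent_three-ver2} already says $C$ is adjacent to three distinct vertices of $B$, a contradiction. Hence we may assume $|\partial(C)|=4$, so $C$ has no $2$-vertex and the four edges of $\partial(C)$ go to exactly two vertices $u,u'\in B$ (one vertex is impossible since $d(u)\le 3$ and $|\partial(C)|\ge 3$). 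There are two shapes: $u$ sends two edges to $C$ and $u'$ sends two edges to $C$, or (degree permitting) a $3$-vertex $u$ sends three edges to $C$ and $u'$ sends one. The three-edge option is impossible, because $u$ would then lie on a triangle or create a non-induced $4$-cycle, contradicting Lemma~\ref{Lem:b(G)=0} and Lemma~\ref{Lem:key3.1}(ii). So $u$ and $u'$ each have exactly two neighbours on $C$, and those two neighbours are nonadjacent, since otherwise a triangle or a chorded $4$-cycle appears.

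Next we exploit the structure around $C\cup\{u,u'\}$. Each of $u,u'$ has exactly one further neighbour, and it lies in $N(v)$. Let $S=V(C)\cup\{u,u'\}$. Then $|\partial(S)|\le 2$ and $\omega_G(S)=12+3+3=18$ if $u,u'$ are $3$-vertices. If $u$ and $u'$ share the same neighbour $v_1\in N(v)$, put $S'=S\cup\{v_1\}$. Then $|\partial(S')|\le 1$, and $\gamma_{b,2}(G[S'])\le 2$: assign $2$ to $u$, which reaches $v_1$, $u'$, two vertices of $C$ and the remaining ones, checking distances on the $C_4$. We also have $\omega_G(S')\ge 21$, so Fact~\ref{Fact:main_inequality} applied to $\mathcal{C}(S')$ gives a contradiction; Lemmas~\ref{Lem:removal-two-edges} and \ref{Lem:no_pendant} rule out new isolated vertices or $C_4$-components that would enlarge $\mathcal{C}(S')$ harmfully. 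If $u,u'$ have distinct neighbours $v_1,v_2\in N(v)$, then $|\partial(S)|=2$ and $\omega_G(S)=18$. Moreover $G[S]$ is dominated with cost $2$: assign $2$ to one vertex $x$ of $C$ adjacent to $u$, at distance at most $2$ from all of $C$ and from $u$. Whether $u'$ is within distance $2$ of $x$ depends on whether $x$ is adjacent to $u'$'s neighbours; since $u,u'$ each hit an antipodal pair of $C$, either the pairs coincide or they are complementary, and in both cases a single vertex of $C$ is at distance $\le 2$ from everything in $S$. Then $r$ can be reduced: in $G[\mathcal{C}(X)]$, replace the cost-$2$ assignment on $C$ by this one and count. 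Alternatively, apply Lemma~\ref{Lem:cutedge4} to the cut $\partial(S)$, using $\scalebox{1.4}{$\omega$}(G[S])=22\equiv 4$ together with the remainder of $G-S$, after verifying via Lemma~\ref{Lem:removal-two-edges} that $G-S$ has no isolated vertex or $C_4$-component.

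The main obstacle is the last subcase. Applying Lemma~\ref{Lem:cutedge4} requires controlling $n_0(G-S)$ and $b_1(G-S)$ and also knowing $\scalebox{1.4}{$\omega$}(G-S)\bmod 9$, which we don't directly have. The plan is to use instead the set $S\cup N[v]$ or $N_2[u]$-type neighbourhoods, exploiting the fact that $v$ is in case $(Q_{6,2,0})$ or $(Q_{5,1,1})$ and each vertex of $B$ meets at most one element of $I(X)\cup A(X)$. The alternative is to show directly a cheaper broadcast on $\mathcal{C}(X)$, of cost $2a+i+1$, by placing $2$ on a suitable vertex of $B$ or $C$ that simultaneously covers $C$ and part of $N(v)$. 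This would contradict $r=2a+i+2$ in these two cases.
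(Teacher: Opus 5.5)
\textbf{There is a genuine gap in the main case.} Your reduction is sound as far as it goes. You use Lemma~\ref{Lem:removal-two-edges} and Lemma~\ref{Lem:C_adjacent_three-ver2} to get $|\partial(C)|=4$, with $u,u'\in B$ each sending two edges to $C$ and having exactly one further neighbour in $N(v)$; this is how the paper starts. Your treatment of the case where $u,u'$ share that neighbour $v_1$ is also correct. But the generic case, where the third neighbours $v_1$ of $u$ and $v_j$ of $u'$ are distinct, is not proved, and neither suggested route works:
\begin{itemize}
\item For $S=V(C)\cup\{u,u'\}$ you have $\omega_G(S)=18=9\cdot 2$, so Fact~\ref{Fact:main_inequality} with $\gamma_{b,2}(G[S])\le 2$ yields nothing.
\item $\scalebox{1.4}{$\omega$}(G[S])=\omega_G(S)+|\partial(S)|=20\equiv 2\pmod 9$, not $22$, since $G[S]$ has four $3$-vertices and two $2$-vertices. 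So Lemma~\ref{Lem:cutedge4} only requires $r_2\le 1$, which gives no contradiction without information about $G-S$.
\item Replacing the cost-$2$ assignment on $C$ saves nothing, because the bound $r\le 2a+i+2$ already spends exactly $2$ on $C$.
\end{itemize}

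\textbf{The missing idea.} The set $S'$ from your shared-neighbour case works in general. Put $Z=V(C)\cup\{u,u',v_1\}$, with $v_1$ the neighbour of $u$ in $N(v)$.
\begin{itemize}
\item Take $z\in V(C)\cap N(u)$. In $G[Z]$ it reaches $v_1$ through $u$ and the antipode of $z$ on $C$ within distance $2$. It also reaches $u'$, because one of the two neighbours of $u'$ on $C$ lies in $N[z]$. So $\gamma_{b,2}(G[Z])\le 2$.
\item Meanwhile $\omega_G(Z)=24-d(v_1)$ and $|\partial(Z)|\le d(v_1)\le 3$.
\item Fact~\ref{Fact:main_inequality} then gives $3i(Z)+2a(Z)>6-2d(v_1)\ge 0$.
\item A $C_4$-component of $G-Z$ would have to carry all of $\partial(Z)$ by Lemma~\ref{Lem:removal-two-edges}. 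That forces $|V(G)|=11$, against $|V(G)|\ge 12$.
\item Suppose $G-Z$ has an isolated vertex. It has degree at least $2$, and only $u'$ and $v_1$ have neighbours outside $Z$, so it is adjacent to $u'$. Hence it is the neighbour of $u'$ in $N(v)$, which is adjacent to $v\notin Z$, a contradiction.
\end{itemize}
This is the paper's argument. It uses nothing about the cases $(Q_{6,2,0})$ and $(Q_{5,1,1})$ beyond $|V(G)|\ge 12$.

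\textbf{A minor point.} Your claim that the two neighbours of $u$ on $C$ are nonadjacent ``since otherwise a triangle appears'' is unjustified. Lemma~\ref{Lem:key3.1}(ii) only forces triangle vertices to be $(3,3,3)$-vertices. The argument above does not need this claim.
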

\begin{proof}
Suppose to the contrary that $C$ is not adjacent to at least three vertices of $B$. 
By Lemma~\ref{Lem:removal-two-edges}, $C$ is adjacent to only two vertices $u$ and $u'$ of $B$. Then $|\partial(C)|=4$ by Lemma~\ref{Lem:C_adjacent_three-ver2}. 
Let $v_1$ be a neighbor of $u$ not in $V(C)$.  
Then $v_1$ is a common neighbor of $u$ and $v$.
Now we consider $Z=V(C)\cup\{u,u',v_1\}$. Then $Z=N_2[z]$ for some vertex $z$ in $C$  and  $\omega_G(Z)=18+6-d(v_1)\geq21$, $|\partial(Z)|\le 3$, and $\gamma_{b,2}(G[Z])\leq 2$ by assigning $2$ to $z$.
By Fact~\ref{Fact:main_inequality},
$i(Z)>0 $ or $a(Z)>0$.
If $G-Z$ has a $C_4$-component $C'$, then by Lemma~\ref{Lem:removal-two-edges}, $V(G)=Z\cup V(C')$ and so $|V(G)|=11$, a contradiction to the fact that $|V(G)|\ge 12$. 
Thus $a(Z)=0$ and so $i(Z)>0$. Since $\delta(G)\geq 2$, every isolated vertex in $G-Z$ is adjacent to $u'$ and has degree $2$ in $G$, so  $u'\notin N_2[v]$, a contradiction.
\end{proof}

Recall that each vertex in $B$ is adjacent to at most one element $\zeta$  in $I(X)\cup A(X)$  and  $\beta\le 5$.
By Claim~\ref{clm:C_adjacent_three},
the case $(Q_{6,2,0})$ does not happen and so $v$ is in the case $(Q_{5,1,1})$.
Let $w_0$ be the isolated vertex in $G-X$.
Then for some vertex $z$ of $C$,
$N_2(z)\cup N(w_0)$ contains $B$ by Claim~\ref{clm:C_adjacent_three}. 
Thus $r\le 4$, by assigning $1$ to each of $v$ and $w_0$ and $2$ to $z$, a contradiction. 

By comparing the cases with our final goal, it remains to consider the cases $(Q_{3,0,1})$ or $(Q_{4,0,2})$ corresponding to the cells with  the mark ($\S$).
We first consider the case $(Q_{3,0,1})$.
Suppose $\beta_2=\ell=0$ and $|\partial(X)|<2i_2+3i_3+2$.
Then $|\partial(X)|\leq 2i_2+3i_3+1$ and so $|\partial(\mathcal{C}(X))|\leq 1$, which contradicts Lemma~\ref{Lem:preview}(iii). 

Now we consider the case $(Q_{4,0,2})$.
As $r>2a+i+1$, by the `moreover' part of Fact~\ref{observation:r},  each vertex in $B$ is adjacent to at most one element $\zeta$ in $I(X)\cup A(X)$.
Thus $2i_2+3i_3  \le \beta$. Since $\beta\leq 5$, $i_3\leq 1$.
Then $i_2\geq 1$.
Let $w_1$ and $w_2$ be the isolated vertices of $G-X$.
If $N(w_1)\cup N(w_2)$ contains $B$, then $r\le 3$ by assigning $1$ to $w_1$, $w_2$, and the vertex $v$, a contradiction. 
Thus $N(w_1)\cup N(w_2)$ does not contain $B$, which implies that $i_2=2$ and $\beta=5$.  By Lemma~\ref{Lem:key3.1}(iii), each $2$-vertex that is an isolated vertex in $G-X$ is not adjacent to a $2$-vertex in $B$. 
Thus  $4= 2i_2\le \beta-\beta_2 $, and so $4+\beta_2 \le \beta$.  
Hence $\beta_2\leq 1$.  
\end{proof}

We finish this subsection by giving a further analysis which will be used later.

\begin{Lem}
    \label{Lem:two-sets}
    Let $z_1$ and $z_2$ be two distinct vertices such that $N[z_1]\cap N[z_2]=\emptyset$, and let $Z=N[z_1]\cup N[z_2]$. If $Z\setminus \{z_1,z_2\}$ has $t$ $2$-vertices and $|E(N[z_1],N[z_2])|=s$, then \[3i(Z)+2a(Z)>2s+2t-6.\] 
\end{Lem}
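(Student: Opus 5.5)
We apply Fact~\ref{Fact:main_inequality} directly to $X=Z$. The inequality there reads
\[
|\partial(Z)|+3n_0(G-Z) > \omega_G(Z)-2b_1(G-Z)-9\gamma_{b,2}(G[Z]).
\]
Here $n_0(G-Z)=i(Z)$ and $b_1(G-Z)=a(Z)$, so it becomes
\[
3i(Z)+2a(Z) > \omega_G(Z)-9\gamma_{b,2}(G[Z])-|\partial(Z)|.
\]
We need $Z\subsetneq V(G)$. This holds because $|Z|\le 8<10\le |V(G)|$ by Lemma~\ref{Lem:key3.1}(i).

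Next we bound $\gamma_{b,2}(G[Z])$. Assigning $1$ to each of $z_1$ and $z_2$ dominates $N[z_1]\cup N[z_2]=Z$, so $\gamma_{b,2}(G[Z])\le 2$. Hence
\[
3i(Z)+2a(Z) > \omega_G(Z)-|\partial(Z)|-18.
\]
It therefore suffices to prove $\omega_G(Z)-|\partial(Z)|\ge 2s+2t+12$.

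We compute this quantity using $\omega_G(x)=6-d(x)$, which is valid since $\delta(G)=2$. Write $d_Z(x)$ for the number of neighbours of $x$ inside $Z$. Then
\[
\omega_G(Z)-|\partial(Z)|=\sum_{x\in Z}\bigl(6-d(x)-(d(x)-d_Z(x))\bigr)=6|Z|-2\sum_{x\in Z}d(x)+2|E(G[Z])|.
\]
To evaluate this, let $d_1=d(z_1)$ and $d_2=d(z_2)$. Then $|Z|=2+d_1+d_2$.

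The edges inside $Z$ are counted as follows.
\begin{itemize}
\item The $d_1+d_2$ edges at $z_1$ and $z_2$.
\item The $s$ edges between $N[z_1]$ and $N[z_2]$.
\item Possible edges inside $N(z_1)$ or inside $N(z_2)$. These only increase the total, so they can be dropped for a lower bound.
\end{itemize}
Hence $|E(G[Z])|\ge d_1+d_2+s$.

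The degree sum satisfies
\[
\sum_{x\in Z}d(x)=d_1+d_2+3(d_1+d_2)-t_0,
\]
where $t_0$ is the number of $2$-vertices in $N(z_1)\cup N(z_2)$. Since $Z\setminus\{z_1,z_2\}=N(z_1)\cup N(z_2)$, we have $t_0=t$.

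Substituting these gives
\[
\omega_G(Z)-|\partial(Z)|\ge 12+6(d_1+d_2)-8(d_1+d_2)+2t+2(d_1+d_2)+2s=12+2s+2t.
\]
This is exactly the required bound, and the inequality follows.

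The main thing to check carefully is that no edges are double-counted. The vertices $z_1$ and $z_2$ are nonadjacent, and their closed neighbourhoods are disjoint, so every $N[z_1]$–$N[z_2]$ edge joins a vertex of $N(z_1)$ to a vertex of $N(z_2)$ and is distinct from the edges at $z_1,z_2$. Any extra edges inside $N(z_1)$ or $N(z_2)$ only add a nonnegative term to the bound.

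If $\mathcal{C}$-type issues arose, they would come from the case $Z=V(G)$, but this is excluded by the size bound $|Z|\le 8<10\le|V(G)|$.
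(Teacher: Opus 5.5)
Your proof is correct and follows the paper's argument: apply Fact~\ref{Fact:main_inequality} to $Z$, use $\gamma_{b,2}(G[Z])\le 2$, and show $\omega_G(Z)-|\partial(Z)|\ge 2s+2t+12$. The paper bounds $\omega_G(Z)$ and $|\partial(Z)|$ separately instead of through your combined identity $6|Z|-2\sum_{x\in Z}d(x)+2|E(G[Z])|$, which is only a cosmetic difference, and your explicit check that $Z\subsetneq V(G)$ via Lemma~\ref{Lem:key3.1}(i) is a detail the paper leaves implicit.
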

\begin{proof}
    Suppose that $Z\setminus \{z_1,z_2\}$ has $t$ $2$-vertices and $|E(N[v_1],N[v_2])|=s$.
        Since $N[z_1] \cap N[z_2]=\emptyset$, \[\omega_G(Z)\ge 3(d(z_1)+1) +3(d(z_2)+1) +  (3-d(z_1))+(3-d(z_2)) + t=2(d(z_1)+d(z_2))+12+t.\] 
        In addition,
    $|\partial(Z)|\le 2 (d(z_1)+d(z_2) )-2s -t$. 
    Obviously, $\gamma_{b,2}(G[Z])\leq 2$.
By Fact~\ref{Fact:main_inequality}, $3i(Z)+2a(Z)>\omega_G(Z)-|\partial(Z)| -9 \gamma_{b,2}(G[Z]) $, and thus 
 \[       3i(Z)+2a(Z)>  2(d(z_1)+d(z_2))+12+t -  ( 2 (d(z_1)+d(z_2) )-2s -t )-18 = 2s+2t-6.\qedhere \]
\end{proof}

\begin{Lem}\label{Lem:no_301}
Let $v\in V^{*}$ and $X=N_{2}[v]$. 
Suppose that $v$ has a $2$-neighbor and  $v$ is in the case $(Q_{3,0,1})$ in Lemma~\ref{Lem:possible:cases} and let $w$ be the isolated vertex in $G-X$. There is a vertex $u\in B(v)\setminus N(w)$ such that $N(u)\subset N(v)\cup N(w)$, and $|N(u)\cap N(v)|\geq 2$.
\end{Lem}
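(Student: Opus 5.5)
The plan is to exploit the bound $r=\gamma_{b,2}(G[\mathcal{C}(X)])=3$. Any cheap broadcast of cost $2$ covering $\mathcal{C}(X)=X\cup\{w\}$ is forbidden. If the desired vertex $u$ did not exist, we will build such a broadcast, or else find a set contradicting Fact~\ref{Fact:main_inequality} or Lemma~\ref{Lem:cutedge4}.

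First we pin down the local structure. Since $v$ has a $2$-neighbor, Lemma~\ref{Lem:key3.1}(iii) shows that $v$ is a $(2,3,3)$-vertex. Write $N(v)=\{v_1,v_2,v_3\}$ with $d(v_1)=2$, let $u_1$ be the other neighbor of $v_1$, and let $v_2,v_3$ be the $3$-neighbors. Then $p_2=1$ and $\beta\le 5$. Lemma~\ref{Lem:possible:cases} gives $\beta_2+\ell\le 1$. When $\beta_2=\ell=0$ it also gives $|\partial(X)|\ge 2i_2+3i_3+2$, which together with Lemma~\ref{Lem:preview}(i) forces $\beta$ to be large, essentially $\beta\in\{4,5\}$. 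The isolated vertex $w$ has all its neighbors in $B$, and $d(w)\in\{2,3\}$.

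Next we use the two natural cost-$2$ broadcasts.
\begin{itemize}
\item Assigning $2$ to $v$ covers $X$. Adding $1$ at $w$ would give cost $3$, which is allowed, so this gives nothing directly.
\item Assigning $1$ to $w$ and $1$ to $v$ covers $N[v]\cup N[w]$. Since $r=3$, some vertex of $B$ is missed, so $B\not\subset N(w)$.
\item Assigning $2$ to $v_j$ for $j\in\{2,3\}$ covers $N_2[v_j]$. Since $r=3$, $N_2[v_j]\cup N[x]$ must fail to contain $\mathcal{C}(X)$ for every single vertex $x$, in particular for $x=w$.
\end{itemize}

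Now take the vertices of $B\setminus N(w)$; there is at least one. Suppose, for contradiction, that each such $u$ has $|N(u)\cap N(v)|\le 1$ or has a neighbor outside $N(v)\cup N(w)$. Using $\beta_2+\ell\le 1$, a vertex $u\in B\setminus N(w)$ can have neighbors only in $N(v)$, in $N(w)$ (via the at most one edge of $G[B]$), or in $V(G)\setminus\mathcal{C}(X)$. We will split into cases according to $\beta_2+\ell\in\{0,1\}$ and $d(w)\in\{2,3\}$. In each case the degree count $|\partial(X)|=2\beta_2+3\beta_3-(4+2\ell)$ is combined with the bound $|\partial(X)|\ge 2i_2+3i_3+2$ from Lemma~\ref{Lem:possible:cases}. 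The aim is to force either an edge pattern in which some $v_j$ together with $w$ dominates everything, giving cost $2+1=3$ but with $v$ or $u_1$ then also covered so that one unit can be saved, or else a set $Y=\mathcal{C}(X)\setminus\{u\}$ with $|\partial(Y)|$ small. For the latter we apply Fact~\ref{Fact:main_inequality} to $Y$, or Lemma~\ref{Lem:every}(ii) when $B$ contains a $2$-vertex, to obtain a contradiction.

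The main obstacle is this final case analysis. We must show that $u_1$ (or another vertex of $B$) is the vertex with two neighbors in $N(v)$. Concretely, $u_1$ is adjacent to $v_1$, and we need it also adjacent to $v_2$ or $v_3$, with its third neighbor in $N(w)$ and $u_1\notin N(w)$. We would first try to show $u_1\notin N(w)$. If $u_1\in N(w)$, then assigning $2$ to $u_1$ covers $v_1$, $v$, $w$ and $N(w)$. A second broadcast of value $1$ at $v_2$ or $v_3$ would then cover the rest unless the $B$-vertices are spread out. The edge-count inequality should rule out that spreading, because $\beta\le 5$ together with the lower bound on $|\partial(X)|$ leaves few spare edges.
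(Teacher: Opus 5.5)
Your proposal has a genuine gap. The step that actually produces $u$ is deferred to a case analysis that is never carried out, and the tools you name for it cannot close it.

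\textbf{Local information is not enough.} The data you work with, $r=3$ plus degree counts inside $\mathcal{C}(X)$, does not imply the conclusion. For example, take $\beta=5$, $\beta_2=\ell=0$, $N(v_2)\cap B=\{u_2,u_3\}$, $N(v_3)\cap B=\{u_4,u_5\}$ and $N(w)=\{u_1,u_2,u_4\}$. Then $|\partial(X)|=10$ and $r=3$. Indeed, two value-$1$ vertices cover at most $8$ of the $10$ vertices. The only vertex within distance two of both $u_3$ and $u_5$ in $G[\mathcal{C}(X)]$ is $v$, and it misses $w$. So everything you list holds, yet no vertex of $B$ has two neighbors in $N(v)$. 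Excluding exactly such configurations is how the lemma is used in Lemma~\ref{Lem:last}. The contradiction must therefore come from minimality applied to a new vertex set.

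\textbf{Your fallbacks do not supply that contradiction.} Let $Y=\mathcal{C}(X)\setminus\{u\}$ and suppose $u$ sends $e$ edges out of $\mathcal{C}(X)$. Then $\omega_G(Y)-|\partial(Y)|=18+2(\beta_2+\ell)+2e\le 26$. In general only $\gamma_{b,2}(G[Y])\le 3$ is available, so Fact~\ref{Fact:main_inequality} gives nothing. Lemma~\ref{Lem:every}(ii) is vacuous here, because $w$ stays isolated in $G-(X\setminus\{u\})$ whenever $u\notin N(w)$.

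\textbf{The missing idea.} Apply minimality to $Z=N[v]\cup N[w]$, which carries the cost-$2$ broadcast $f(v)=f(w)=1$ you already noticed.
\begin{enumerate}
\item Lemma~\ref{Lem:two-sets} with $s\ge d(w)$ and $t\ge 1$ (the $2$-vertex $v_1$) gives $3i(Z)+2a(Z)>2d(w)-4\ge 0$. So $G-Z$ has an isolated vertex or a $C_4$-component. Since $G-\mathcal{C}(X)$ has neither, that object contains a vertex of $X\setminus Z=B\setminus N(w)$.
\item To rule out the $C_4$ option, use the slack $q$ in the \emph{moreover} part of Fact~\ref{Fact:main_inequality}. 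Together with Lemma~\ref{Lem:preview}(ii) it reads $27-q>24+2(\beta_2+\ell)$. Hence $q\le 2$, and $q\ge 1$ forces $\beta_2=\ell=0$.
\item Each component of $G-\mathcal{C}(X)$ contributes nonnegatively to $q$. A $P_3$ or $K_3$ contributes at least $3$, and a $K_2$ contributes $1$. So there is no component of order $3$, and a component of order $2$ forces $\ell=0$.
\item Since $\ell\le 1$, a $C_4$-component of $G-Z$ meets $B$ in one or two vertices. Deleting them leaves a $P_3$, two isolated vertices, or a $K_2$ together with $\ell=1$. All three are impossible.
\item Hence some $u\in B\setminus N(w)$ is isolated in $G-Z$, i.e.\ $N(u)\subset N(v)\cup N(w)$. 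Only now does your degree bookkeeping finish the proof. From $\beta_2+\ell\le 1$ you get $|N(u)\cap N(w)|\le 1$, hence $|N(u)\cap N(v)|\ge 2$. If $d(u)=2$, then $\beta_2=1$ forces $\ell=0$, giving the same conclusion.
\end{enumerate}

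Two minor points. With $p_3=2$ the count is $|\partial(X)|=2\beta_2+3\beta_3-5-2\ell$, not $-(4+2\ell)$. Also $\beta=3$ is not excluded: $\beta_2=\ell=0$ with $d(w)=2$ gives $|\partial(X)|=4=2i_2+2$.
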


\begin{proof}
Note that $\beta_2+\ell \le 1$.
Let $N(v)=\{v_1,v_2,v_3\}$ and $v_1$ be a $2$-vertex. 
Let $q=\scalebox{1.4}{$\omega$}(G-\mathcal{C}(X))-9\gamma_{b,2}(G-\mathcal{C}(X))$.  
By Lemma~\ref{Lem:preview}(ii) and  the `moreover' part of  Fact~\ref{Fact:main_inequality}, 
it holds that $9r-q>16a+6i+2(\beta_2+\ell)+18$ where $r=\gamma_{b,2}(G[\mathcal{C}(X)])$.
For the case ($Q_{3,0,1}$), $27-q>24+2(\beta_2+\ell)$ and so $q\le 2$. Moreover, if $q\ge 1$, then  $\beta_2=\ell=0$. Thus, noting that $G-\mathcal{C}(X)
$ has no isolated vertex, we can conclude that for each component $D$ of $G-\mathcal{C}(X)$, $D$ has no size one or three, and if $D$ has size two, then $q\ge1$ and so $\ell=0$.

Let $Z=N[v]\cup N[w]$, $|E(N[v],N[w])|=s$ and 
$Z\setminus\{v,w\}$ contain $t$ $2$-vertices.
Then $s\ge d(w)$ and $t\ge 1$. By Lemma~\ref{Lem:two-sets}, $3i(Z)+2a(Z)>2s+2t-6\ge 2d(w)+2-6\ge 0$. Thus $i(Z)>0$ or $a(Z)>0$.

Note that $|X\setminus Z|=\beta-d(w)$.
Since $G-\mathcal{C}(X)$ has neither an isolated vertex nor a $C_4$-component and $Z\setminus\{w\}$ is a subset of $X$, 
there is a vertex $u\in X-Z$ such that $u$ is an isolated vertex of $G-Z$ or a vertex on a $C_4$-component of $G-Z$. 

Suppose that there exists a $C_4$-component $C$ of $G-Z$ containing $u$. 
For simplicity, let $U=V(C)\cap (X-Z)$.
Then $U\neq \emptyset$.
Since $\ell\le 1$, $U$ has at most two vertices.
By the argument in the first paragraph, each component of $G-\mathcal{C}(X)$ has no size one or three. It follows that $|U|=2$. Then $\ell=1$, which is also a contradiction. Therefore $u$ is an isolated vertex of $G-Z$ and so $N(u)\subset N(v)\cup N(w)$.

Note $\beta_2+\ell\le 1$.
Since $u \in B$,
$u$ is adjacent to at most one vertex in $B(v)$.
Thus $|N(u) \cap N(w) |\le 1$.
If $d(u)=3$, then
$|N(u)\cap N(v)|\ge 2$.
If $d(u)=2$, then $\beta_2=1$ and so $\ell=0$, which implies $|N(u)\cap N(v)|= 2$ since $u$ is not adjacent to any vertex in $B$.
Thus in each case, $|N(u)\cap N(v)|\ge  2$.
\end{proof}

\subsection{The size of $N_2[v]$ when $v$ has a $2$-neighbor}\label{subsect:3.3}
 
Let $V^{**}$ be the set of $3$-vertices with a $2$-neighbor. Note that 
\[V^{**}\subset V^* \subset V^t.\] 
In this subsection, we aim to establish that 
$\beta(v) = 5$ for every vertex $v \in V^{**}$. 
As a preliminary step, we first prove that 
$\beta(v) \geq 4$ for all $v \in V^{**}$.
In the following proofs, as in the previous subsection, for a fixed vertex $v$, the notations $\beta$, $\beta_2$ $B$, $\ell$ are understood to be $\beta(v)$, $\beta_2(v)$, $B(v)$, and $\ell(v)$, respectively.
We also recall that every $4$-cycle of $G$ is an induced cycle by Lemma~\ref{Lem:b(G)=0}.

\begin{Lem}\label{Lem:no_intersection_induced cycle_length4}
Let $v\in V^{**}$. Then $\beta(v)\ge 4$.
Hence, for every $4$-cycle $C$ containing a $2$-vertex, there is no other $4$-cycle  sharing any edge in $G$ with $C$. 
\end{Lem}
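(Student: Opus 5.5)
We argue by contradiction: fix $v\in V^{**}$ with $\beta\le 3$, write $N(v)=\{v_1,v_2,v_3\}$ with $d(v_1)=2$, and let $X=N_2[v]$. Since $v\in V^{**}\subset V^*$, Lemma~\ref{Lem:possible:cases} applies, so $v$ is in case $(Q_{3,0,1})$, $(Q_{4,0,2})$ or $(Q_{4,1,0})$. Case $(Q_{4,0,2})$ requires $\beta=5$, so it is excluded at once.

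\textbf{Case $(Q_{4,1,0})$.} Here $\beta_2=\ell=0$, and Lemma~\ref{Lem:preview}(i) gives $3\le|\partial(X)|\le 2\beta\le 6$. The $C_4$-component $C$ of $G-X$ has $|\partial(C)|\ge 3$ by Lemma~\ref{Lem:removal-two-edges}. If $|\partial(C)|=3$, then by Lemma~\ref{Lem:C_adjacent_three-ver2} $C$ meets three distinct vertices of $B$, so $\beta=3$ and every vertex of $B$ sends exactly one edge to $C$. Then $|\partial(\mathcal{C}(X))|\le 1$. We then check Lemma~\ref{Lem:preview}(iii) with $a=1$, $i=0$, $\beta_2=\ell=0$, which gives $7\equiv 7$; this is not yet a contradiction. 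To finish, we exhibit a cheap broadcast: assign $2$ to a suitable vertex of $C$ and $1$ to $v$, so that every vertex of $B$ lies within distance $2$ of that vertex of $C$. This gives $r\le 3$, contradicting $r=4$. If $|\partial(C)|=4$, we argue similarly: the edges from $C$ land on at most three vertices of $B$, and one vertex $z$ of $C$ is within distance $2$ of all of them. Assigning $2$ to $z$ and $1$ to $v$ again gives $r\le3$.

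\textbf{Case $(Q_{3,0,1})$.} This is the main work. We invoke Lemma~\ref{Lem:no_301}: with $w$ the isolated vertex of $G-X$, there is $u\in B\setminus N(w)$ with $N(u)\subset N(v)\cup N(w)$ and $|N(u)\cap N(v)|\ge2$. Since $d(v_1)=2$, the vertex $v_1$ has only one neighbor in $B$, so $u$ is adjacent to both $v_2$ and $v_3$, and $vv_2uv_3$ is a $4$-cycle. Now $B$ contains the other neighbor $u_1$ of $v_1$, the vertex $u$, and the neighbors of $w$, which lie in $B\setminus\{u\}$. With $\beta\le 3$, the vertices $v_2,v_3$ each have at most one further neighbor in $B$.

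We then split on $d(w)\in\{2,3\}$ and on whether $u$ has degree $3$ (its third neighbor lies in $N(w)$). In each subcase the structure $N[v]\cup B\cup\{w\}$ is almost closed: $|\partial(\mathcal{C}(X))|\le1$, or the whole graph is determined. The contradiction then comes from Lemma~\ref{Lem:preview}(iii) with $a=0$, $i=1$ (requiring $6+2(\beta_2+\ell)\equiv7,8$, impossible when $\beta_2+\ell\le1$), from Lemma~\ref{Lem:cutedge4}, or from the bound $|V(G)|\ge10$ in Lemma~\ref{Lem:key3.1}(i). Induced $4$-cycles (Lemma~\ref{Lem:b(G)=0}) and $(2,3,3)$/$(3,3)$ degree patterns (Lemma~\ref{Lem:key3.1}(iii)) prune the configurations. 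The expected obstacle is this case analysis, especially subcases with $|\partial(X)|$ exactly $2i_2+3i_3+2$, where the exceptional clause of $(Q_{3,0,1})$ must be used together with a direct small broadcast, for example $2$ on $u$ and $1$ on $w$, to force $r\le 2$.

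\textbf{The consequence.} If a $4$-cycle $C$ through a $2$-vertex $x$ shared an edge with another $4$-cycle $C'$, let $v$ be a $3$-neighbor of $x$ on $C$; then $v\in V^{**}$. The two $4$-cycles through $v$ overlap, and counting $N_2[v]$ shows $B(v)$ has at most $3$ vertices. This contradicts $\beta(v)\ge4$. The overlap patterns are checked using that $4$-cycles are induced and that $x$'s neighbors are $3$-vertices.
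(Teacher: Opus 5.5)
\textbf{Gap in case $(Q_{3,0,1})$.} This case has a genuine gap, and it sits exactly at the configuration that carries the difficulty.

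First, the inference ``since $d(v_1)=2$, \dots, $u$ is adjacent to both $v_2$ and $v_3$'' does not follow. Nothing prevents $u=u_1$, and in fact $u=u_1$ is forced:
\begin{itemize}
\item If $d(w)=3$, then $N(w)=B$, and putting $1$ on each of $v,w$ gives $r\le 2$.
\item So $d(w)=2$; write $N(w)=\{u_2,u_3\}$. Then $u_1\notin N(w)$, since otherwise $u_1$ would have two $2$-neighbors. Hence $B=\{u_1,u_2,u_3\}$ and $u=u_1$.
\item If $u_1$ sees only one of $v_2,v_3$, its third neighbor lies in $N(w)$. Then $\ell=1$, $|\partial(X)|=2$, and $G$ has $8$ vertices, contradicting Lemma~\ref{Lem:key3.1}(i).
\end{itemize}
Also, Lemma~\ref{Lem:preview}(iii) only excludes $\beta_2+\ell=0$ here, since $6+2\cdot 1\equiv 8$.

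What remains is $N(u_1)=\{v_1,v_2,v_3\}$, $v_2u_2,v_3u_3\in E(G)$, $\ell=0$, with $u_2,u_3$ each having one further neighbor $z_2,z_3$ outside $\mathcal{C}(X)$. This is not ``almost closed'':
\begin{itemize}
\item $|\partial(X)|=4=2i_2+3i_3+2$, so the exceptional clause of $(Q_{3,0,1})$ is satisfied.
\item $|\partial(\mathcal{C}(X))|=2$, so Lemma~\ref{Lem:preview}(iii) is unavailable.
\item The rest of $G$ is unconstrained, so Lemma~\ref{Lem:key3.1}(i) gives nothing. Lemma~\ref{Lem:cutedge4} on $\partial(\mathcal{C}(X))$ also gives nothing, because $G[\mathcal{C}(X)]$ has weight $28\equiv 1 \pmod 9$.
\item A local broadcast cannot help either. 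No vertex of $G[\mathcal{C}(X)]$ has eccentricity at most $2$, and no two closed neighborhoods cover its eight vertices, so $\gamma_{b,2}(G[\mathcal{C}(X)])=3$ genuinely. Your suggested ``$2$ on $u$ and $1$ on $w$'' costs $3$, not $2$.
\end{itemize}

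\textbf{The missing idea.} You need to re-center at $v_2$. Take $Y=N_2[v_2]\setminus\{z_2\}=\mathcal{C}(X)\setminus\{u_3\}$. Then:
\begin{itemize}
\item $\omega_G(Y)=23$;
\item $|\partial(Y)|=3$, the edges being $v_3u_3$, $wu_3$, $u_2z_2$;
\item $\gamma_{b,2}(G[Y])\le 2$, by putting $2$ on $v_2$;
\item $a(Y)=i(Y)=0$, since $u_3$ keeps degree $1$ in $G-Y$ and the only other edge leaving $Y$ is $u_2z_2$.
\end{itemize}
Fact~\ref{Fact:main_inequality} would then force $3>23-18=5$, which is false. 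This is Lemma~\ref{Lem:every}(ii) applied at $v_2$ with the $2$-vertex $v_1\in B(v_2)$, and it is how the paper closes this case.

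\textbf{Case $(Q_{4,1,0})$.} When $|\partial(C)|=3$, your argument is a valid and shorter alternative to the paper's figure analysis: put $1$ on $v$ and $2$ on the vertex of $C$ opposite its $2$-vertex. When $|\partial(C)|=4$, you have $V(G)=X\cup V(C)$, but a vertex of $B$ may miss $C$ entirely. This happens with $N(u_1)=N(v)$, while $u_2,u_3$ each send two edges to $C$. Then $1$ on $v$ does not reach $u_1$. That configuration still falls to $2$ on $v_2$ and $1$ on $u_3$, so this part is only a small omission.
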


\begin{proof}
Let $X=N_2[v]$ and $N(v)=\{v_1,v_2,v_3\}$ and we may assume that $v_1$ is a $2$-vertex and $v_2$ and $v_3$ are $3$-vertices by Lemma~\ref{Lem:key3.1}(iii).
Let $u_1$ be the neighbor of $v_1$ other than $v$.

To the contrary, suppose
$\beta<4$. It is clear that $\beta\geq 2$.
If $\beta=2$, then $|\partial(X)|=1$, which contradicts Lemma~\ref{Lem:cutedge4}.
Thus $\beta=3$.
Let $r=\gamma_{b,2}(G[\mathcal{C}(X)])$.
Since $v\in V^*$,
$v$ is in the case $(Q_{3,0,1})$ or $(Q_{4,1,0})$ by Lemma~\ref{Lem:possible:cases}.

\noindent{(Case 1)} Suppose that $v$ is in the case $(Q_{3,0,1})$. 
Let $w$ be the isolated vertex in $G-X$.
If $d(w)=3$, then $r\le 2$ by assigning $1$ to $v$ and $w$  since $\beta=3$, a contradiction.
By the fact $\delta(G)\ge 2$, $d(w)=2$.
By Lemma~\ref{Lem:key3.1}(iii),
we may let $N(w)=\{u_2,u_3\}$ where $u_2$ and $u_3$ are vertices in $B$ other than $u_1$ and $d(u_2)=d(u_3)=3$. 
By Lemma~\ref{Lem:key3.1}(iii)
$\beta_2=0$.
By Lemma~\ref{Lem:no_301}, there is a vertex $u\in B$ such that $N(u)\subset N(v)\cup N(w)$ and $|N(u)\cap N(v)|\ge 2$.
Thus $u=u_1$. 
Suppose that $\ell=1$. Since $u_2u_3\notin E(G)$ by Lemma~\ref{Lem:key3.1}(ii), 
it follows that $|N(u_1)\cap N(v)|= 2$ and $|N(u_1)\cap \{u_2,u_3\}|=1$.
Then $|\partial(X)|=d(u_1)+d(u_2)+d(u_3)-5-2\ell = 4-2\ell=2$ by counting the edges incident with $B$, which determines $G$ so that $|V(G)|=8$, a contradiction to Lemma~\ref{Lem:key3.1}(i).
Thus $\ell=0$, and so $N(u_1)=N(v)$. 
Consider $X'=N_2[v_2]\setminus\{z\}$, where $z$ is the vertex in $N_2[v_2]\setminus \mathcal{C}(X)$. 
Then $X'= \mathcal{C}(X)\setminus\{u_i\}$  for some $u_i\in\{u_2,u_3\}$ and so one can observe from $d_{G-X'}(u_i)=1$.
In addition, we can check that $z$ has only one neighbor in $X'$, $v_1 \in B(v_2)$, and by the fact $\delta(G)\ge 2$ and Lemma~\ref{Lem:removal-two-edges}, $a(X')=i(X')=0$, which contradicts Lemma~\ref{Lem:every}(ii).

\noindent{(Case 2)} Suppose that $v$ is in the case $(Q_{4,1,0})$. Then $\beta_2=\ell=0$. 
Since $\beta_2=0$, each vertex in $B$ is a $3$-vertex.
Then there are following three possible cases as shown in Figure~\ref{fig:Lem:no_intersection_induced cycle_length4_0}.
Note that $v_2,v_3\in V^t$. 
Let $C$ be the $C_4$-component in $G-X$.
If $|\partial(C)|=4$, then $V(G)=X\cup V(C)$, and then $i(N_2[v_2])=a(N_2[v_2])=0$, which is a contradiction to Lemma~\ref{Lem:every}(i). Hence $|\partial(C)|=3$ and $|\partial(\mathcal{C}(X))|=1$. 
By Lemma~\ref{Lem:C_adjacent_three-ver2}, $C$ is adjacent to each of $u_1$, $u_2$, and $u_3$.  Let $z_i$ be the vertex  on $C$ such that $u_iz_i\in E(G)$   for each $i\in \{1,2,3\}$. Thus, the second  figure of Figure~\ref{fig:Lem:no_intersection_induced cycle_length4_0} is impossible.

If it is the case of the first figure of Figure~\ref{fig:Lem:no_intersection_induced cycle_length4_0} and $z_2z_3\in E(G)$, then $a(N_2[v_2])=i(N_2[v_2])=0$, which is 
a contradiction to Lemma~\ref{Lem:every}(i).
If it is the case of the  third figure of Figure~\ref{fig:Lem:no_intersection_induced cycle_length4_0} and $z_1z_2\in E(G)$, then $a(N_2[v_2])=i(N_2[v_2])=0$, which is 
a contradiction to Lemma~\ref{Lem:every}(i).
Hence, in any case, $z_2$ has exactly one neighbor in $N_2[v_2]$.
Let $Y=N_2[v_2]\setminus \{z_2\}$.
We can check $v_1,z_2\in B(v_2)$. 
The component of $G-Y$ containing $u_1$  or $u_3$ contains at least $5$ vertices and so $a(Y)=i(Y)=0$, which contradicts Lemma~\ref{Lem:every}(ii).
\begin{figure}[h!]\centering
\includegraphics[page=9,width=13.5cm]{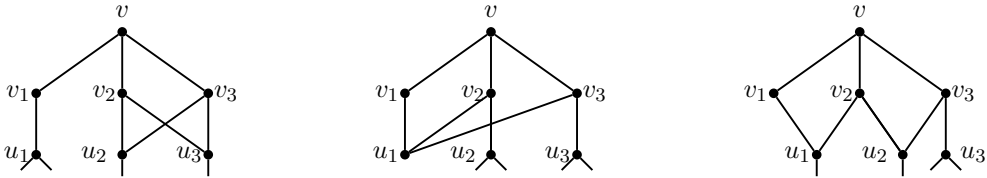}
 \caption{ Subgraphs mentioned in Lemma~\ref{Lem:no_intersection_induced cycle_length4}}
\label{fig:Lem:no_intersection_induced cycle_length4_0}
\end{figure}
\end{proof}
 
\begin{Lem} \label{Lem:two-vertex-removal_C_4}
The removal of at most two vertices of $G$ cannot create a $C_4$-component.
\end{Lem}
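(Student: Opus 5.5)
The one-vertex case is exactly Lemma~\ref{Lem:removal-two-edges}, so I would only treat two vertices. Suppose $x\neq y$ and $G-\{x,y\}$ has a $C_4$-component $C\colon c_1c_2c_3c_4c_1$, and put $U=V(C)\cup\{x,y\}$. Then $\partial(C)$ consists only of edges to $x$ or $y$. The plan is to fix the local structure of $U$ and then contradict Fact~\ref{Fact:main_inequality} or Lemma~\ref{Lem:cutedge4}, as in the previous lemmas.

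\textbf{Local structure.} By Lemma~\ref{Lem:removal-two-edges}, $|\partial(C)|\ge 3$. Neither $x$ nor $y$ alone can cut off $C$, so both have neighbours on $C$. Each vertex of $C$ has at most one edge leaving $C$, so $|\partial(C)|\in\{3,4\}$, and hence one of $x,y$, say $x$, has two neighbours on $C$.
\begin{itemize}
\item If these two neighbours are consecutive on $C$, then $x$ lies on a triangle. By Lemma~\ref{Lem:key3.1}(ii) it is a $(3,3,3)$-vertex.
\item If they are opposite, then $x$ lies on further $4$-cycles sharing edges with $C$.
\end{itemize}
When $|\partial(C)|=3$, $C$ contains a $2$-vertex, and Lemma~\ref{Lem:no_intersection_induced cycle_length4} forbids another $4$-cycle sharing an edge with $C$. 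Together with Lemma~\ref{Lem:key3.1}(iii), this should leave only a few configurations: $|\partial(C)|=3$ with a $(2,1)$ split of the edges, or $|\partial(C)|=4$ with a $(2,2)$ or $(3,1)$ split. Each of these is further split by whether $xy\in E(G)$.

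\textbf{Counting.} For each configuration I would compute $\omega_G(U)$ and $|\partial(U)|$. The vertices of $C$ contribute $13$ or $12$, and $x,y$ contribute $6$ or $7$ together. Since $x$ and $y$ have at most two further edges between them, $|\partial(U)|\le 3$ in all cases. Next I would exhibit a $2$-LD broadcast of $G[U]$ of cost $2$. When the neighbours of $x$ and $y$ on $C$ are close, assign $2$ to a vertex $c_j$ of $C$ adjacent to $x$ with $y\in N_2[c_j]$. Otherwise assign $1$ to each of $x$ and $y$, which works when their neighbourhoods cover $C$. This gives $\omega_G(U)-9\gamma_{b,2}(G[U])\ge 1$. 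Then Fact~\ref{Fact:main_inequality} forces $|\partial(U)|$ to be small unless $i(U)+a(U)>0$. Lemma~\ref{Lem:cutedge4} then handles the cases with $|\partial(U)|\le 2$.

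\textbf{Main obstacle.} The hardest part is the case $i(U)+a(U)>0$, where $G-U$ has isolated vertices or $C_4$-components. I would pass to $\mathcal C(U)$ and use the `moreover' part of Fact~\ref{Fact:main_inequality}. Since $|\partial(U)|\le 3$ and $\delta(G)=2$, there is at most one such piece: a single isolated $2$- or $3$-vertex, or one $C_4$-component with $|\partial|=3$. Adding it should reduce the boundary to at most $1$ while raising $\gamma_{b,2}$ by at most $1$ or $2$ respectively. I would then finish either by Lemma~\ref{Lem:cutedge4}, checking the weight modulo $9$, or by noting that $V(G)=\mathcal C(U)$ contradicts $|V(G)|\ge 10$ and $\gamma_{b,2}(G)\ge4$ from Lemma~\ref{Lem:key3.1}(i). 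If a cheap broadcast for $\mathcal C(U)$ fails in some configuration, I would fall back on the contraction trick from Case 2 of Lemma~\ref{Lem:removal-two-edges}: replace $C$ by a new vertex adjacent to $x$ and $y$, and lift a minimum broadcast back at cost $+1$.
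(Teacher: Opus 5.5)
Your local analysis is sound and in fact goes further than you use it. $|\partial(C)|=3$ is impossible outright: the vertex with two neighbours on $C$ either lies on a triangle through a neighbour of the $2$-vertex of $C$, against Lemma~\ref{Lem:key3.1}(ii), or closes a second $4$-cycle sharing edges with $C$, against Lemma~\ref{Lem:no_intersection_induced cycle_length4}. A $(3,1)$ split gives a chorded $4$-cycle. So the only surviving configuration is $|\partial(C)|=4$ with each of $x,y$ adjacent to exactly two vertices of $C$, and by the same lemmas both are $3$-vertices.

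This is exactly where your counting fails. Here $\omega_G(U)=12+3+3=18=9\cdot 2$, so the slack $\omega_G(U)-9\gamma_{b,2}(G[U])\ge 1$ you claim does not exist. Also $xy\notin E(G)$, since otherwise $G=G[U]$ has six vertices. Hence $|\partial(U)|=2$ and $\scalebox{1.4}{$\omega$}(G[U])=20\equiv 2\pmod 9$. Lemma~\ref{Lem:cutedge4} then gives only $r_2\le 1$, and Fact~\ref{Fact:main_inequality} reduces to the vacuous $2+3i(U)+2a(U)>0$. If $x',y'$ are the outside neighbours of $x,y$, then $i(U)=a(U)=0$ unless $x'=y'$ is a $2$-vertex, so your ``main obstacle'' paragraph never comes into play.

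The contraction fallback does not rescue this. Replacing $C$ by one vertex adjacent to $x$ and $y$ turns local weight $12+3+3$ into $4+4+4$, a drop of only $6$. That yields just $\gamma_{b,2}(G')\le\gamma_{b,2}(G)-1$, so lifting at cost $+1$ gives no contradiction. In Lemma~\ref{Lem:removal-two-edges} the drop was $10$ because $C$ had only two boundary edges.

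The missing idea is to enlarge $U$ by one outside neighbour to create slack, as the paper does.
\begin{itemize}
\item If $x'=y'$, then $Z=U\cup\{x'\}$ has $|\partial(Z)|\le 1$ and $\scalebox{1.4}{$\omega$}(G[Z])=22\equiv 4\pmod 9$, contradicting Lemma~\ref{Lem:cutedge4}.
\item If $x'\neq y'$, pick $c\in V(C)$ adjacent to $x$. Then $N_2[c]=U\cup\{x'\}$ has seven vertices, weight at least $21$, and at most three boundary edges. It is dominated by giving $2$ to $c$.
\end{itemize}
In the second case, Fact~\ref{Fact:main_inequality} forces an isolated vertex or a $C_4$-component in $G-N_2[c]$. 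A $C_4$-component would be cut off by the two vertices $y,x'$ with only three boundary edges, which is already excluded. So there is an isolated vertex, and it must be $y'$ with $N(y')=\{y,x'\}$. Then $\mathcal{C}(N_2[c])$ has weight at least $25$ and at most one boundary edge. It is dominated by assigning $1$ to $x$ and to $y$, which contradicts the `moreover' part of Fact~\ref{Fact:main_inequality}.
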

\begin{proof}
To the contrary, suppose that there are at most two vertices of $G$ whose removal creates a $C_4$-component. 
By Lemma~\ref{Lem:removal-two-edges},
there are exactly two vertices $u$ and  $v$ of $G$ whose removal creates a $C_4$-component $C$. We take such vertices $u$ and $v$ so that $|\partial(C)|$ is small as possible.
We may assume that $u$ is adjacent to exactly two vertices $x$ and $y$ on $C$. 
Then $d(u)=3$ by Lemmas~\ref{Lem:key3.1}(ii) and ~\ref{Lem:no_intersection_induced cycle_length4}.

If $|\partial(C)|=3$, then $x \in V^{**}$ or $y \in V^{**}$ and $\beta(x)\leq 3$ or $\beta(y)\leq 3$, which contradicts Lemma~\ref{Lem:no_intersection_induced cycle_length4}.
Thus $|\partial(C)|=4$.
Hence
$v$ is adjacent to exactly two vertices of $C$ and $d(v)=3$ by Lemma~\ref{Lem:no_intersection_induced cycle_length4}.
Let $u'$ and $v'$ be the neighbor of $u$ and $v$ outside $C$, respectively.
Let $Z=N[u]\cup N[v]$.
If $u'=v'$, then $|\partial(Z)|\le 1$ and $\scalebox{1.4}{$\omega$}(G[Z])=22$, which contradicts Lemma~\ref{Lem:cutedge4}.  
Thus $u'\neq v'$.
Let $X=N_2[x]$. Then $|X|=7$ and so $\omega_G(X)\ge 21$.
Since $|\partial(X)|\le3$, by the minimality of $|\partial(C)|$, $a(X)=0$. We note $\gamma_{b,2}(G[X])\leq 2$ and so 
by Fact~\ref{Fact:main_inequality}, $G-X$ has an isolated vertex $w$.
Since $\delta(G)\ge2$, $N(w)=\{v,u'\}$.
In addition, $|\partial(\mathcal{C}(X))|\le 1$,
$\omega_G(\mathcal{C}(X))\ge 24$. Note that $\gamma_{b,2}(G[\mathcal{C}(X)])\leq 2$ by assigning $1$ to both $u$ and $v$, which contradicts Fact~\ref{Fact:main_inequality}.
\end{proof} 

\begin{Lem}\label{Lem:no_inter_cycle_length4 and 5 6} A $4$-cycle containing a $2$-vertex does not share an edge with a cycle of length at most six. 
\end{Lem}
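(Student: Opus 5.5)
Let $C: xyzwx$ be a $4$-cycle with a $2$-vertex, say $d(x)=2$. By Lemma~\ref{Lem:key3.1}(iii), $y$ and $w$ are $3$-vertices and $y,w\in V^{**}$. By Lemma~\ref{Lem:key3.1}(ii), $z$ is not on a triangle. Suppose, for a contradiction, that some cycle $D$ of length $k\le 6$ shares an edge with $C$, with $D\neq C$. Since $x$ has degree $2$, any cycle through an edge $xy$ or $xw$ passes through both $y$ and $w$ via $x$. Thus, up to symmetry, $D$ contains the edge $yz$ (or the path $yxw$). The case $k=4$ is excluded by Lemma~\ref{Lem:no_intersection_induced cycle_length4}, and $k=3$ is excluded by Lemma~\ref{Lem:key3.1}(ii). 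So it remains to treat $k\in\{5,6\}$.

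The main tool will be a count of $\beta(y)$ (or $\beta(w)$), compared with Lemma~\ref{Lem:no_intersection_induced cycle_length4}, Lemma~\ref{Lem:possible:cases}, and Lemma~\ref{Lem:no_301}. Write $N(y)=\{x,z,y'\}$, so that $B(y)\subseteq \{w\}\cup (N(z)\setminus\{y,w\})\cup (N(y')\setminus\{y\})$. If $D$ uses $yz$ and has length $5$ or $6$, then a short path outside $C$ joins $y$ (through $y'$) to $z$ or to $w$. Such a path forces a coincidence or an edge inside $B(y)$ or $B(w)$. Concretely:
\begin{itemize}
\item A $5$-cycle $y z z' y'' y'$ makes $z'$ and $y''$ adjacent vertices of $B(y)$, so $\ell(y)\ge 1$. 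A $5$-cycle through $w$ makes a vertex of $B(y)$ coincide with a neighbor of $w$.
\item A $6$-cycle either identifies two vertices of $B(y)$, which drops $\beta(y)$, or gives an edge in $B(y)$ or in $B(w)$.
\end{itemize}
Note that $w\in B(y)$ always, and $w$ has two neighbors in $N[y]$ via $x$ and $z$. This already shows $\beta(y)\le 5$. I would then enumerate the few configurations: $D$ is a $5$- or $6$-cycle through $yz$, or through $y x w$, or through $zw$ by symmetry.

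For each configuration, I would first check whether $\beta(y)\le 3$; this is impossible by Lemma~\ref{Lem:no_intersection_induced cycle_length4}. Otherwise $\beta_2+\ell\ge 1$, and I would apply Lemma~\ref{Lem:possible:cases} to $y$ to land in $(Q_{3,0,1})$ with $\beta_2+\ell\le1$, or in $(Q_{4,0,2})$. The case $(Q_{4,1,0})$ is excluded since it requires $\ell=0$, unless the cycle only causes vertex coincidences. In $(Q_{3,0,1})$, Lemma~\ref{Lem:no_301} supplies a vertex $u\in B(y)$ with $|N(u)\cap N(y)|\ge2$. Combined with the extra cycle, this produces either a second $4$-cycle sharing an edge with $C$, contradicting Lemma~\ref{Lem:no_intersection_induced cycle_length4}, or a set $Z$ with small boundary. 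For the latter, Lemma~\ref{Lem:two-sets} applied to $z_1=y$, $z_2=w$ (or to $z_1=y$ and the isolated vertex $w_0$), together with Lemma~\ref{Lem:two-vertex-removal_C_4} and Fact~\ref{Fact:main_inequality}, gives a contradiction. In $(Q_{4,0,2})$, I would use $\beta=5$ and $\beta_2\le1$ and show that the two isolated $2$-vertices plus $y$ dominate $\mathcal{C}(N_2[y])$ at cost $3$, contradicting $r=4$.

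The main obstacle is the case analysis for $6$-cycles. There the cycle may pass through $y'$ and a neighbor of $z$ with an intermediate vertex outside $N_2[y]$, so it affects neither $\beta(y)$ nor $\ell(y)$ directly. For this case I would switch the center to $z$ (if $z\in V^t$), or to $w$, where the same $6$-cycle becomes a $5$- or $6$-cycle producing an edge within $B(w)$. If that still fails, I would use the set $Z=N[y]\cup N[w']$ for an appropriate vertex $w'$ at distance $3$ on $D$, and extract the contradiction from Lemma~\ref{Lem:two-sets} with $s\ge 2$ and $t\ge1$.
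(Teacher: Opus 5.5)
Your reduction to $k\in\{5,6\}$ is fine, but the plan does not handle the main case. First, a correction: your own inclusion $B(y)\subseteq\{w,z'\}\cup(N(y')\setminus\{y\})$, with $N(z)=\{y,w,z'\}$, gives $\beta(y)\le 4$. Hence $\beta(y)=4$ by Lemma~\ref{Lem:no_intersection_induced cycle_length4}, so $(Q_{4,0,2})$ (which needs $\beta=5$) never occurs, and $y$ is in $(Q_{3,0,1})$ or $(Q_{4,1,0})$.

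The real gap is your claim that one then has $\beta_2+\ell\ge 1$, which is what you use to discard $(Q_{4,1,0})$. Write $N(w)=\{x,z,\tilde w\}$ and $N(y')=\{y,a,b\}$. Take a shortest $D$ of length $6$, namely $yzz'may'y$ or $yzw\tilde w ay'y$, with $m$, resp.\ $\tilde w$, outside $N_2[y]$. Then nothing forces an edge or a $2$-vertex in $B(y)=\{w,z',a,b\}$, so $y$ may well be in $(Q_{4,1,0})$, and this is where most of the work lies. Recentering at $z$ is the right idea, but not for your reason: the $6$-cycle does not force an edge inside $B(z)$ or $B(w)$. For example, $y'$ and $m$ lie in $B(z)$ but are joined only through $a\notin N_2[z]$. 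Your fallback does not give a contradiction either: Lemma~\ref{Lem:two-sets} with $s\ge2$, $t\ge1$ only yields $3i(Z)+2a(Z)>0$, i.e.\ that $G-Z$ has an isolated vertex or a $C_4$-component. The paper instead uses the $C_4$-component $C^*$ of $G-N_2[y]$. Lemma~\ref{Lem:two-vertex-removal_C_4} forces at least three vertices of $B(y)$ to be adjacent to $C^*$, and $m$, resp.\ $\tilde w$, cannot lie on $C^*$ for degree reasons. Then, because the $2$-vertex $x$ lies in $B(z)$, the paper applies Lemma~\ref{Lem:every}(i) to $N_2[z]$, or Lemma~\ref{Lem:every}(ii) to $N_2[z]$ minus a vertex with a unique neighbor in $N_2[z]$. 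It then rules out the isolated vertex or $C_4$-component this lemma forces, using the known position of $C^*$.

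There is a second gap in $(Q_{3,0,1})$. Lemma~\ref{Lem:no_301} forces $u=w$, the only vertex of $B(y)$ with two neighbors in $N(y)$. So $\tilde w$ is a neighbor of the isolated vertex $w_0$ and hence lies in $B(y)$. This rules out the $6$-cycles and the $5$-cycle $yzz'ay'y$ (it would give $\ell(y)\ge 2$). But for the $5$-cycle $D=yzway'y$ (i.e.\ $\tilde w=a$), after excluding a common neighbor of $z'$ and $a$, it only leads to $N(w_0)=\{a,b\}$. That is a new $4$-cycle $y'aw_0b$ through the $2$-vertex $w_0$, sharing the edge $ay'$ with $D$ but no edge with $C$. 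Lemma~\ref{Lem:no_intersection_induced cycle_length4} says nothing about this, and your proposal offers no other argument.

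The paper closes this case by choosing the pair $(C,D)$ globally over all $4$-cycles through a $2$-vertex: first $|E(D)|$ minimum, then $|E(C)\cap E(D)|$ minimum. The new pair then contradicts that choice. Excluding the common neighbor of $z'$ and $a$ uses the same minimality, since it would give a $5$-cycle sharing only $zw$ with $C$. The same extremal choice also guarantees that the labelled vertices are distinct. Because you fix $C$ at the outset, you would need this device, or an ordering that first rules out a $5$-cycle sharing exactly one edge with any such $4$-cycle.
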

\begin{proof}
    Suppose that $G$ has a $4$-cycle $C$ containing a $2$-vertex such that $C$ shares an edge with some cycle $C'$ of length at most six.
    Then $C$ is an induced cycle by Lemma~\ref{Lem:b(G)=0}.
    We take such $C$ and $C'$ so that 
$|E(C')|$ is the smallest, with respect to $|E(C)\cap E(C')|$ is as small as possible.
Take a $3$-vertex $v$ adjacent to the $2$-vertex on $C$. Let $N(v)=\{v_1,v_2,v_3\}$ where $v_1$ is the $2$-vertex on $C$ and $v_3$ is a $3$-vertex not on $C$.
Let $u_1$ be the vertex nonadjacent to $v$ on $C$. By Lemma~\ref{Lem:key3.1}(iii), $d(v_2)=d(v_3)=d(u_1)=3$.
By Lemmas~\ref{Lem:key3.1}(ii) and~\ref{Lem:no_intersection_induced cycle_length4}, $C'$ is an induced cycle and has length $5$ or $6$ with $1\leq |E(C)\cap E(C')|\leq 2$. 

Note that  $v\in V^{**}$.
In the following, let $X=N_2[v]$ and $r=\gamma_{b,2}(G[X])$. Then $\beta \leq 4$.
By Lemma~\ref{Lem:no_intersection_induced cycle_length4}, $\beta=4$ and so $|X|=8$.  By Lemma~\ref{Lem:possible:cases}, $v$ is in one of the cases $(Q_{3,0,1})$ or $(Q_{4,1,0})$.
When $v$ is in the case $(Q_{3,0,1})$, we  denote by $w^*$ the isolated vertex in $G-X$. When $v$ is in the case $(Q_{4,1,0})$, we denote by $C^*$ the $C_4$-component in $G-X$.

Since $|E(C')|\in\{5,6\}$ and $|E(C)\cap E(C')|\in\{1,2\}$, there are four possible cases according to $|E(C')|$ and $|E(C)\cap E(C')|$
(see Figure~\ref{fig:Lem:no_inter_cycle_length4 and 5 6_ver2}).  We often use the following observations.
\begin{itemize}
\item[$(\dag)$] When $v$ is in the case $(Q_{3,0,1})$, by Lemma~\ref{Lem:no_301}, there is a vertex $u_i$ such that $N(u_i)\subset N(v)\cup N(w^*)$ with  $|N(v)\cap N(u_i)|\ge 2$. Such vertex must be $u_1$, and so the neighbor of $u_1$ not on $C$ is a  neighbor of $w^*$.
\item[$(\ddag)$] Suppose that $C'$ is a $5$-cycle. 
Then $\ell\ge 1$. Therefore, $v$ is not in the case $(Q_{4,1,0})$ by Lemma~\ref{Lem:possible:cases}.
Thus $v$ is in the case of $(Q_{3,0,1})$. 
Since $\beta_2+\ell\le 1$, $\beta_2=0$ and  $\ell=1$. 
\item[($\dag\dag$)] If $v$ is in the case  $(Q_{4,1,0})$, then $\beta_2=\ell=0$ and so 
every vertex in $B$ is a $3$-vertex and $B$ is an independent set.
\end{itemize}

We assume that the vertices  are labeled
as in the figures of Figure~\ref{fig:Lem:no_inter_cycle_length4 and 5 6_ver2}, in each of the corresponding case.
In the following, $x_1$ or $x_2$ is considered only when it appears on the figure. By Lemma~\ref{Lem:key3.1}(iii), $x_1$ is a $3$-vertex. 

\begin{figure}[h!]\centering
\includegraphics[page=10,width=18cm]{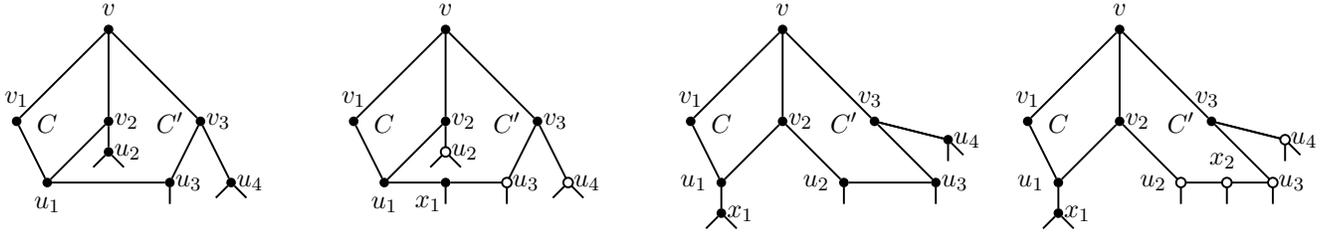}
 \caption{Subgraphs mentioned in Lemma~\ref{Lem:no_inter_cycle_length4 and 5 6}, first two are the cases where $|E(C)\cap E(C')|=2$, and the other two are the cases where $|E(C)\cap E(C')|=1$. 
} 
\label{fig:Lem:no_inter_cycle_length4 and 5 6_ver2}
\end{figure}

\noindent{(Case 1)} Suppose that $|E(C)\cap E(C')|=2$. See  the first and second figures of Figure~\ref{fig:Lem:no_inter_cycle_length4 and 5 6_ver2}. 
Note that 
from the choice of $C$ and $C'$ and the fact that $|X|=8$, all vertices with labeling in the figures are distinct.
Moreover, by the choice of $C$ and $C'$, 
$ N(u_2) \cap \{u_3,u_4\}=\emptyset$, 
and $x_1\not\in N(u_2)$.

Suppose that $v$ is in the case $(Q_{3,0,1})$. 
If $C'$ is a $6$-cycle, then
by $(\dag)$, $x_1$ is a neighbor of $w^*$, and so $N(w^*)\not\subseteq B$, a contradiction.
Thus $C'$ is a $5$-cycle. 
In addition,  
if $u_2$ and $u_3$ have a common neighbor $u'$, then
$u_1u_3u'u_2v_2u_1$ is a $5$-cycle sharing one edge with $C$, which contradicts the choice of $C$ and $C'$.
Thus
$u_2$ and $u_3$ have no common neighbor.
Thus, $w^*$ is not adjacent to $u_2$ or $u_3$.
Hence $d(w^*)=2$. By $(\dag)$, $u_3$ is a neighbor of $w^*$.
Thus $N(w^*)=\{u_3,u_4\}$. Then
$v_3u_4w^*u_3$ is a $4$-cycle sharing one edge with $C'$, which contradicts the choice of $C$ and $C'$.

Suppose that $v$ is in the case $(Q_{4,1,0})$.
Then $C'$ is a $6$-cycle by $(\ddag)$, and so see the second figure of Figure~\ref{fig:Lem:no_inter_cycle_length4 and 5 6_ver2}.
Note that ($\dag\dag$) holds. 
By Lemma~\ref{Lem:two-vertex-removal_C_4},
$u_2$, $u_3$, $u_4$ are adjacent to $C^*$.
Let 
$x_2$ and $x_3$ be the neighbors of $u_2$ other than $v_2$ and $Y=N_2[v_2]$.
Note $v_2\in V^t$.
Assume that $x_2$ is on $C^*$.
By the choice of $C$ and $C'$, $x_3x_1\notin E(G)$. 
If $x_2x_3\in E(G)$, then $x_3 $ in on $C^*$ and so, by Lemma~\ref{Lem:removal-two-edges} and the fact $\delta(G)\geq 2$, $i(Y)=a(Y)=0$, which contradicts Lemma~\ref{Lem:every}(i).
Thus $x_2x_3\notin E(G)$.
Then $u_2$ is the only neighbor of $x_3$ in $Y$ and  we check $v_1,x_3\in B(v_2)$.
By the fact $\delta(G)\geq2$ and Lemma~\ref{Lem:removal-two-edges}, $i(Y\setminus \{x_3\})=a(Y\setminus \{x_3\})=0$, which contradicts Lemma~\ref{Lem:every}(ii).

\noindent{(Case 2)}  Suppose that  $|E(C)\cap E(C')|=1$.
See  the third and fourth figures of Figure~\ref{fig:Lem:no_inter_cycle_length4 and 5 6_ver2}. 
Note that 
from the choice of $C$ and $C'$ and the fact that $|X|=8$ and $\ell\le 1$, all vertices with labeling in the figures are  distinct.

If $v$ is in the case $(Q_{3,0,1})$, then 
by ($\dag$), $x_1$ is a neighbor of $w^*$ and so $x_1\in B$, a contradiction. Hence $v$ is in the case $(Q_{4,1,0})$.
Thus ($\dag\dag$) holds.
By ($\ddag$),  $C'$ is a $6$-cycle.
By the choice of $C$ and $C'$,
\begin{equation}\label{eq:Lem:no_inter_cycle_length4 and 5 6_2N(x_1)}
N[x_1]\cap N[u_2]=\emptyset.  \end{equation}
If $x_2u_4\in E(G)$ or $d(x_2)=2$, then $x_2$ is an isolated vertex in $G-X$, which contradicts the case $(Q_{4,1,0})$.
Thus $x_2u_4\notin E(G)$ and $d(x_2)=3$.
Since $x_2$ is a $1$-vertex in $G-X$, $x_2\notin V(C^*)$.
By Lemma~\ref{Lem:two-vertex-removal_C_4},
at least three vertices in $B$ are adjacent to $C^*$. Thus both $u_i$ and $u_{i+1}$ are adjacent to $C^*$ for some $i\in\{1,3\}$.

Suppose that both $u_1$ and $u_2$ are adjacent to $C^*$.
Then $x_1$ is on $C^*$.
Let $x'_2$ be the neighbor of $u_2$ other than $v_2$ and $x_2$ with $x'_2\in V(C^*)$.
Then $x_1$ and $x'_2$ are not adjacent by \eqref{eq:Lem:no_inter_cycle_length4 and 5 6_2N(x_1)}.
We let $Y=N_2[v_2]\setminus \{x_1\}$.
We check that $v_1,x_1\in B(v_2)$ and $x_1$ has the unique neighbor $u_1$ in $N_2[v_2]$. 
Thus, by the fact $\delta(G)\geq 2$ and Lemma~\ref{Lem:removal-two-edges}, $a(Y)=i(Y)=0$, which contradicts Lemma~\ref{Lem:every}(ii).
Therefore both $u_3$ and $u_4$ are adjacent to $C^*$, and there is a vertex $x^*$ such that $x^*\in N(u_1)\cup N(u_2)$ and $x^* \notin V(C)\cup V(C')\cup V(C^*)$. 
Then $x^*\in B(v_2)$ and $x^*$ has exactly one neighbor that is $u_1$ or $u_2$ in $N_2[v_2]$.
Let $Y=N_2[v_2] \setminus \{x^*\}$. 
We can check
$a(Y)=0$ by Lemma~\ref{Lem:removal-two-edges}.
By Lemma~\ref{Lem:every}(ii), $i(Y)>0$.
For the isolated vertex $w'$ in $G-Y$, $N(w')=\{u_2,x_2\}$ or $\{u_1,x_2\}$, which is a contradiction to 
Lemma~\ref{Lem:key3.1}(ii) or \eqref{eq:Lem:no_inter_cycle_length4 and 5 6_2N(x_1)}.
\end{proof}

\begin{Lem}\label{lem:no-common-neighbors of degree$2$}
Suppose that $\beta(v)=4$ for $v\in V^{**}$. Let $N(v)=\{v_1,v_2,v_3\}$ and $v_1$ be a $2$-vertex.
Then
$N(v_1)\cap N(v_i)=\{v\}$ for each $i\in\{2,3\}$. Moreover, there is no $4$-cycle with a $2$-vertex in $G$.
\end{Lem}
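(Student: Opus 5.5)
We argue by contradiction. Let $u_1$ be the neighbor of $v_1$ other than $v$, and suppose $u_1\in N(v_i)$ for some $i\in\{2,3\}$, say $i=2$. Then $C:vv_1u_1v_2v$ is a $4$-cycle containing the $2$-vertex $v_1$. By Lemma~\ref{Lem:key3.1}(iii) we have $d(u_1)=d(v_2)=d(v_3)=3$. By Lemma~\ref{Lem:key3.1}(ii) neither $v_2$ nor $u_1$ lies on a triangle, and since every $4$-cycle is induced, $C$ is induced.

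We first fix the shape of $B=B(v)$. It contains the third neighbor $x$ of $u_1$, the third neighbor $y$ of $v_2$, and the two other neighbors $u_3,u_4$ of $v_3$. Lemma~\ref{Lem:no_intersection_induced cycle_length4} gives $x\ne y$ and $v_3\notin N(u_1)$. Lemma~\ref{Lem:no_inter_cycle_length4 and 5 6} rules out any $5$- or $6$-cycle through an edge of $C$, so $x,y\notin N(v_3)$, $xy\notin E(G)$, and $x,y$ have no common neighbor. Together with $\beta(v)=4$, this gives $B=\{x,y,u_3,u_4\}$ with all vertices distinct. Moreover, the only edge possibly present in $G[B]$ is $u_3u_4$, and Lemma~\ref{Lem:key3.1}(ii) excludes it since $v_3$ would then lie on a triangle. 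Hence $\ell=0$.

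Next we apply Lemma~\ref{Lem:possible:cases}, under which $v$ is in case $(Q_{3,0,1})$ or $(Q_{4,1,0})$.
\begin{itemize}
\item[] \textbf{Case $(Q_{3,0,1})$.} Let $w$ be the isolated vertex of $G-N_2[v]$. Observation $(\dag)$ from the proof of Lemma~\ref{Lem:no_inter_cycle_length4 and 5 6} (that is, Lemma~\ref{Lem:no_301}) gives $xw\in E(G)$ or $yw\in E(G)$, hence $N(w)\subseteq B$. Since $x$ and $y$ have no common neighbor, $w$ is adjacent to exactly one of them together with vertices among $u_3,u_4$. This produces a $6$-cycle through $v_3$, $v$ and an edge of $C$, for instance $vv_2yw u_3v_3v$ sharing $vv_2$ with $C$, which contradicts Lemma~\ref{Lem:no_inter_cycle_length4 and 5 6}.
\item[] \textbf{Case $(Q_{4,1,0})$.} Then $\beta_2=\ell=0$, and Lemma~\ref{Lem:two-vertex-removal_C_4} forces the $C_4$-component $C^*$ to be adjacent to at least three vertices of $B$, hence to one of $x,y$. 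We then imitate Case~2 of the proof of Lemma~\ref{Lem:no_inter_cycle_length4 and 5 6}. Take $Y=N_2[v_2]$ minus a suitable vertex of $B(v_2)$ having a unique neighbor in $N_2[v_2]$; here $v_1\in B(v_2)$ is a $2$-vertex. Using $\delta(G)\ge2$, Lemma~\ref{Lem:removal-two-edges}, and the absence of short cycles through $C$, we show $a(Y)=i(Y)=0$, contradicting Lemma~\ref{Lem:every}(ii). If instead no such vertex exists, we obtain $a(N_2[v_2])=i(N_2[v_2])=0$, contradicting Lemma~\ref{Lem:every}(i).
\end{itemize}

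For the ``moreover'' part, let $C$ be a $4$-cycle containing a $2$-vertex $v_1$, and let $v$ be a neighbor of $v_1$ on $C$. Then $v\in V^{**}$, and by Lemma~\ref{Lem:no_inter_cycle_length4 and 5 6} and the count above, $\beta(v)=4$. The first part then applies and says $N(v_1)\cap N(v_2)=\{v\}$, which is impossible since $u_1$ is a common neighbor. Hence no such $C$ exists.

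The main obstacle is the $(Q_{4,1,0})$ case. There the position of $C^*$ relative to $x,y,u_3,u_4$ must be enumerated, and the deleted vertex for Lemma~\ref{Lem:every}(ii) must be chosen correctly in each configuration. I would split according to whether $C^*$ meets both $u_3$ and $u_4$ or only one of them, exactly as in Case~2 of the proof of Lemma~\ref{Lem:no_inter_cycle_length4 and 5 6}.
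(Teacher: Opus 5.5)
Your description of $B(v)$ is wrong, and your $(Q_{3,0,1})$ argument depends on it. The third neighbor $x$ of $u_1$ is not in $N_2[v]$. It is not adjacent to $v_1$. It is not adjacent to $v_2$, since a triangle through $u_1$, which has the $2$-neighbor $v_1$, is excluded by Lemma~\ref{Lem:key3.1}(ii). It is not adjacent to $v_3$, since then $vv_1u_1xv_3v$ is a $5$-cycle sharing two edges with $C$. It is $u_1$ itself, a common neighbor of $v_1$ and $v_2$, that lies in $B(v)$, so $B(v)=\{u_1,y,u_3,u_4\}$. This double appearance of $u_1$ is also what gives $\beta(v)\le 4$ in your ``moreover'' reduction; Lemma~\ref{Lem:no_intersection_induced cycle_length4} gives $\beta(v)\ge 4$.

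In case $(Q_{3,0,1})$, the vertex supplied by Lemma~\ref{Lem:no_301} has two neighbors in $N(v)$, so it is $u_1$. Then $x\in N(u_1)\subseteq N(v)\cup N(w)$ forces $x\in N(w)\subseteq B(v)$, an immediate contradiction. You instead allow ``$xw$ or $yw$'' and close with the $6$-cycle $vv_2ywu_3v_3v$, which only covers $w\sim y$. In the situation $(\dag)$ actually produces, $w\sim x$, the analogous cycle $vv_1u_1xwu_3v_3v$ has length $7$, and Lemma~\ref{Lem:no_inter_cycle_length4 and 5 6} gives nothing. So as written this case is not closed. Separately, Lemma~\ref{Lem:key3.1}(ii) does not exclude the edge $u_3u_4$, since $v_3$ may be a $(3,3,3)$-vertex. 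This is harmless only because $\ell=0$ is needed just in case $(Q_{4,1,0})$, where Lemma~\ref{Lem:possible:cases} supplies it.

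Case $(Q_{4,1,0})$ carries the real content, and there you only assert that some $Y$ with $a(Y)=i(Y)=0$ exists. The correct setup is as follows.
By Lemma~\ref{Lem:two-vertex-removal_C_4}, at least three of $u_1,y,u_3,u_4$ are adjacent to $C^*$. They cannot include both $u_1$ and $y$, since that gives a cycle of length at most $6$ through the edge $u_1v_2$ of $C$, against Lemma~\ref{Lem:no_inter_cycle_length4 and 5 6}. So the attachment set is exactly $\{y,u_3,u_4\}$ or $\{u_1,u_3,u_4\}$. In particular, your planned split by whether $C^*$ meets both $u_3$ and $u_4$ is vacuous.

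For $\{y,u_3,u_4\}$, your idea is what the paper does. Remove from $N_2[v_2]$ the neighbor of $y$ other than $v_2$ and a chosen $C^*$-neighbor of $y$, and apply Lemma~\ref{Lem:every}(ii).

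For $\{u_1,u_3,u_4\}$, the paper uses a different tool. Lemma~\ref{Lem:no_inter_cycle_length4 and 5 6} rules out $|\partial(C^*)|=3$, so one of $u_3,u_4$ has two neighbors on $C^*$. Then $Z=N[v_3]\cup N[u_1]$ has $a(Z)=i(Z)=0$. This contradicts Lemma~\ref{Lem:two-sets}, with $s\ge 2$ from the edges $vv_1,vv_2$ and $t\ge 1$ from $v_1$. If you prefer to stay with Lemma~\ref{Lem:every}(ii) at $v_2$ in this subcase, you must name the deleted vertex of $B(v_2)$ and actually verify $a(Y)=i(Y)=0$. A neighbor of $y$ other than $v_2$ is a natural candidate, with a possible triangle through $y$ treated separately. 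None of this appears in your proposal, so this subcase is currently unproved.
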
 
\begin{proof}
Note that the `moreover' part immediately follows from the former statement. Let $X=N_2[v]$.
To the contrary, suppose that  $(N(v_1)\cap N(v_2) )\setminus \{v\}\neq \emptyset$.
Since $\beta=4$, $N(v_1)\cap N(v_3)=\{v\}$. We follow the label of the vertices as in  Figure~\ref{fig:lem:no-common-neighbors of degree$2$}. 
We note that $d(u_1)=d(x_1)=3$ by Lemma~\ref{Lem:key3.1}(iii)
By Lemma~\ref{Lem:no_inter_cycle_length4 and 5 6},
Figure~\ref{fig:lem:no-common-neighbors of degree$2$} shows an induced structure, that is, 
\begin{equation}
  \label{eq:lem:no-common-neighbors of degree$2$_1} 
\ell=0\quad \text{and}\quad N[x_1]\cap \{u_2,u_3,u_4\}=\emptyset.\end{equation}
Since $\beta=4$,
it is one of the cases $(Q_{3,0,1})$ and $(Q_{4,1,0})$ by Lemma~\ref{Lem:possible:cases}. 

\begin{figure}[h!]\centering
\includegraphics[page=11,width=4.7cm]{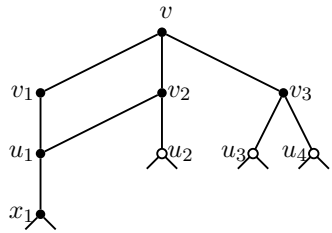}
 \caption{A subgraph mentioned in Lemma~\ref{lem:no-common-neighbors of degree$2$}}
\label{fig:lem:no-common-neighbors of degree$2$}
\end{figure}

Suppose that $v$ is in the case $(Q_{3,0,1})$.
Let $w$ be the isolated vertex in $G-X$. 
By Lemma~\ref{Lem:no_301}, there is a vertex $u\in B(v)\setminus N(w)$ such that $N(u)\subset N(w)\cup N(v)$ and $|N(u)\cap N(v)|\ge 2$ and so $u_1=u$. Thus $x_1 \in B(v)$, a contradiction. 

Suppose that $v$ is in the case $(Q_{4,1,0})$.
Then $\beta_2=\ell=0$ and so $d(u_2)=d(u_3)=d(u_4)=3$.
Let $C^*$ be the $C_4$-component in $G-X$.
By Lemma~\ref{Lem:two-vertex-removal_C_4}, at least three vertices of $u_1$, $u_2$, $u_3$, $u_4$ are adjacent to $C^*$.
By Lemma~\ref{Lem:no_inter_cycle_length4 and 5 6},
at least one of $u_1$ and $u_2$ is nonadjacent to $C^*$. 
Thus the set of vertices adjacent to $C^*$ is
either $\{u_1,u_3,u_4\}$ or $\{u_2,u_3,u_4\}$.

Suppose that 
the set of vertices adjacent to $C^*$ is
$\{u_2,u_3,u_4\}$.
We let $x_i$ be a neighbor of $u_i$ on $C^*$ for each $i\in \{2,3,4\}$.
Let $x'_2$ be the neighbor of $u_2$ other than $x_2$ and $v_2$.
By \eqref{eq:lem:no-common-neighbors of degree$2$_1},
$x_1\neq x_2'$.
By Lemma~\ref{Lem:no_inter_cycle_length4 and 5 6}, $x'_2x_1\notin E(G)$ and so $x'_2$ has exactly one neighbor that is $u_2$ in $N_2[v_2]$.
In addition, $v_1,x'_2\in B(v_2)$.
We let $Y'_2=N_2[v_2]\setminus \{x'_2\}$.
Then by the fact $\delta(G)\geq 2$ and Lemma~\ref{Lem:two-vertex-removal_C_4}, $a(Y'_2)=i(Y'_2)=0$, which contradicts Lemma~\ref{Lem:every}(ii).

Suppose that 
the set of vertices adjacent to $C^*$ is
$\{u_1,u_3,u_4\}$.
By Lemma~\ref{Lem:no_inter_cycle_length4 and 5 6},
$|\partial(C^*)|\ne 3$ and so $|\partial(C^*)|= 4$.
Without loss of generality,
$u_3$ is adjacent to two vertices on $C^*$.
Then we let $Z=N[v_3]\cup N[u_1]$.
By the fact that $\delta(G)\ge 2$ and Lemma~\ref{Lem:removal-two-edges}, $a(Z)=i(Z)=0$, which contradicts Lemma~\ref{Lem:two-sets}. 
\end{proof}

\begin{Lem}\label{Lem:$4$-cycle-(3,3,3)}
Let $v\in V^{**}$. Then $\beta(v)=5$. \end{Lem}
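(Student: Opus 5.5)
We argue by contradiction. Since $v\in V^{**}\subset V^*$, we have $\beta(v)\le 5$, and Lemma~\ref{Lem:no_intersection_induced cycle_length4} gives $\beta(v)\ge 4$. So it suffices to rule out $\beta(v)=4$. Suppose therefore that $\beta=4$, write $N(v)=\{v_1,v_2,v_3\}$ with $v_1$ the $2$-vertex, and let $u_1$ be the other neighbor of $v_1$. By Lemma~\ref{lem:no-common-neighbors of degree$2$} we have $N(v_1)\cap N(v_i)=\{v\}$ for $i=2,3$, so $u_1\notin N(v_2)\cup N(v_3)$. Hence $u_1\in B$, and $u_1$ has exactly one neighbor in $N[v]$. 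The remaining three vertices of $B$ come from the four edge-slots leaving $v_2$ and $v_3$. Since $|B|=4$, exactly one vertex $u_2\in B\setminus\{u_1\}$ is adjacent to both $v_2$ and $v_3$ (note $v_2v_3\notin E(G)$ by Lemma~\ref{Lem:key3.1}(ii)). So $vv_2u_2v_3$ is a $4$-cycle through $v$, with $d(u_2)=3$ by Lemma~\ref{Lem:key3.1}(iii). Label the other vertices of $B$ as $u_3\in N(v_2)$ and $u_4\in N(v_3)$. By Lemma~\ref{Lem:possible:cases}, $v$ is in case $(Q_{3,0,1})$ or $(Q_{4,1,0})$.

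\emph{Case $(Q_{3,0,1})$.} Let $w$ be the isolated vertex of $G-X$. By Lemma~\ref{Lem:no_301} there is some $u\in B\setminus N(w)$ with $N(u)\subset N(v)\cup N(w)$ and $|N(u)\cap N(v)|\ge 2$. The only vertex of $B$ with two neighbors in $N(v)$ is $u_2$, so $u=u_2$. Then the third neighbor $y$ of $u_2$ lies in $N(w)\subset B$, which forces $\ell\ge 1$, and hence $\beta_2=0$ and $\ell=1$. I expect to finish this case in one of two ways. The first is to count $|\partial(X)|=2\beta-\beta_2-2\ell$ minus the edges to $w$, and show that $|\partial(\mathcal{C}(X))|\le 1$, which contradicts Lemma~\ref{Lem:preview}(iii). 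Failing that, I would exhibit a cheap broadcast, such as $2$ on $u_2$ and $1$ on suitable vertices, which would contradict $r=3$. In the subcase $y=u_1$, note that $u_1u_2$ together with $v_1$ creates a $5$-cycle $vv_1u_1u_2v_2$ through $v_1$, and this configuration should be killable by redoing the $Y=N_2[v_2]\setminus\{\cdot\}$ argument of Lemma~\ref{Lem:every}(ii).

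\emph{Case $(Q_{4,1,0})$.} Here $\beta_2=\ell=0$, so every vertex of $B$ is a $3$-vertex and $B$ is independent. Let $C^*$ be the $C_4$-component of $G-X$. By Lemma~\ref{Lem:two-vertex-removal_C_4}, at least three vertices of $B$ are adjacent to $C^*$. I would then mimic the proofs of Lemmas~\ref{Lem:no_inter_cycle_length4 and 5 6} and~\ref{lem:no-common-neighbors of degree$2$}. Pick $v_2$ (which lies in $V^t$) and a vertex $x'$ of $B(v_2)$ that has a unique neighbor in $N_2[v_2]$. Apply Lemma~\ref{Lem:every}(ii) to $Y=N_2[v_2]\setminus\{x'\}$, which is valid because $v_1\in B(v_2)$ is a $2$-vertex. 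Then verify $a(Y)=i(Y)=0$ using $\delta(G)\ge 2$ and Lemma~\ref{Lem:two-vertex-removal_C_4}. If instead some vertex of $B$ sends two edges into $C^*$, the alternative is to use Lemma~\ref{Lem:two-sets} with $Z=N[v_3]\cup N[u_1]$ or with $N[v]\cup N[z]$ for $z\in C^*$.

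The main obstacle is the subcase analysis of how $u_1,u_3,u_4$ attach to $C^*$, respectively to $w$. In each configuration one must find the right auxiliary vertex for which Lemma~\ref{Lem:every} or Lemma~\ref{Lem:two-sets} fails. The configuration where $u_2$'s third neighbor lies on $C^*$ and $|\partial(C^*)|=4$ looks the most delicate, and there I would first try the explicit broadcast with $2$ on $v$ and $2$ on a vertex of $C^*$ adjacent to $u_2$ to beat $r=4$.
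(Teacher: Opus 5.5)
\textbf{Verdict: there is a genuine gap.} Your reduction is correct and matches the paper. You get $\beta\ge 4$ from Lemma~\ref{Lem:no_intersection_induced cycle_length4}, and the edge-slots at $v_2,v_3$ force a unique common neighbor $u_2$. Only $(Q_{3,0,1})$ and $(Q_{4,1,0})$ remain, and in $(Q_{3,0,1})$ Lemma~\ref{Lem:no_301} forces $u=u_2$, hence $\beta_2=0$ and $\ell=1$. One small point: $d(u_2)=3$ comes from the fact that no $4$-cycle contains a $2$-vertex, not from Lemma~\ref{Lem:key3.1}(iii).

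After the reduction there is no proof, and both finishing moves you propose for $(Q_{3,0,1})$ provably fail.
\begin{itemize}
\item With $\beta_3=4$, $p_3=2$, $\ell=1$, the exact count is $|\partial(X)|=3\beta_3-(3+p_3+2\ell)=5$; the expression $2\beta-\beta_2-2\ell$ is only an upper bound. Since $w$ absorbs at most three of these edges, $|\partial(\mathcal{C}(X))|\ge 2$, so Lemma~\ref{Lem:preview}(iii) never applies.
\item There is no cheap broadcast either. Take $d(w)=3$, $N(w)=\{u_1,u_3,u_4\}$, $y=u_3$. The nine-vertex graph $G[\mathcal{C}(X)]$ has maximum degree $3$ and every vertex has eccentricity $3$, so $r=3$ genuinely holds.
\end{itemize}
What actually closes this case:
\begin{itemize}
\item If $d(w)=2$, then $wu_1\notin E(G)$, because $u_1$ already has the $2$-neighbor $v_1$. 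So $y\in N(w)=\{u_3,u_4\}$, and $v_2u_2u_3$ or $v_3u_2u_4$ is a triangle through a vertex with a $2$-neighbor, contradicting Lemma~\ref{Lem:key3.1}(ii).
\item If $y=u_1$, then $u_2\in V^t$ and $N_2[u_2]=\mathcal{C}(X)$, contradicting Lemma~\ref{Lem:every}(i).
\item If $y=u_3$ (symmetrically $u_4$), let $Y_1=N_2[v_2]=\{v,v_1,v_2,v_3,u_2,u_3,w\}$. Then $\omega_G(Y_1)=22$, $|\partial(Y_1)|=4$ and $\gamma_{b,2}(G[Y_1])\le 2$. In $G-Y_1$ the vertices $u_1,u_4$ become $1$-vertices, so $a(Y_1)=i(Y_1)=0$, and Fact~\ref{Fact:main_inequality} would require $4>22-18$.
\end{itemize}

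For $(Q_{4,1,0})$ you only state an intention to mimic earlier lemmas. You do not specify $x'$ or verify $a(Y)=i(Y)=0$. Your fallback broadcast ($2$ on $v$ and $2$ on a vertex of $C^*$) costs $4$, which does not beat $r=4$. Also, $|\partial(C^*)|=4$ is not a delicate subcase but is forced, since a $C_4$-component with three boundary edges puts a $2$-vertex on a $4$-cycle. The paper needs a specific chain of steps:
\begin{itemize}
\item $u_1$ is adjacent to $C^*$. Otherwise, up to the symmetry $v_2\leftrightarrow v_3$, $u_2$ and $u_3$ send one edge each and $u_4$ two edges to $C^*$. Then $u_2\in V^*$, $G-N_2[u_2]$ has exactly one isolated vertex, and $\ell(u_2)\ge 2$, contradicting Lemma~\ref{Lem:possible:cases}.
\item $u_1$ has only one neighbor on $C^*$, since otherwise $\beta(u_1)=3$.
\item Every vertex of $B$ has exactly one neighbor on $C^*$. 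Otherwise some $N_2[u_i]$ has weight $24$, boundary at most $5$ and $a=i=0$, contradicting Fact~\ref{Fact:main_inequality}.
\item Finally, fix the cyclic order of the attachments on $C^*$ and apply Lemma~\ref{Lem:every}(ii) at $v_3$, deleting the third neighbor of $u_4$.
\end{itemize}
Since neither case is carried out, the proposal proves the reduction but not the lemma.
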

\begin{proof}
Let $N(v)=\{v_1,v_2,v_3\}$, $v_1$ be a $2$-vertex, and $X=N_2[v]$.  By Lemma~\ref{Lem:no_intersection_induced cycle_length4}, $\beta(v)\geq 4$.
To the contrary, suppose that $\beta(v)=4$.
By Lemma~\ref{lem:no-common-neighbors of degree$2$},   $N(v_2)\cap N(v_3)\neq \{v\}$. 
Then we assume that the vertices are labeled as in the first graph in Figure~\ref{fig:clm:$4$-cycle_(3,3,3)-vertex}.
 By Lemma~\ref{lem:no-common-neighbors of degree$2$} and Lemma~\ref{Lem:key3.1}(iii), 
 $d(u_1)=d(u_3)=d(x_1)=3$.
 By Lemma~\ref{Lem:possible:cases},
 $v$ is in the case $(Q_{3,0,1})$ or $(Q_{4,1,0})$.
 
\begin{figure}[h!]\centering
\includegraphics[page=12,width=0.9\textwidth]{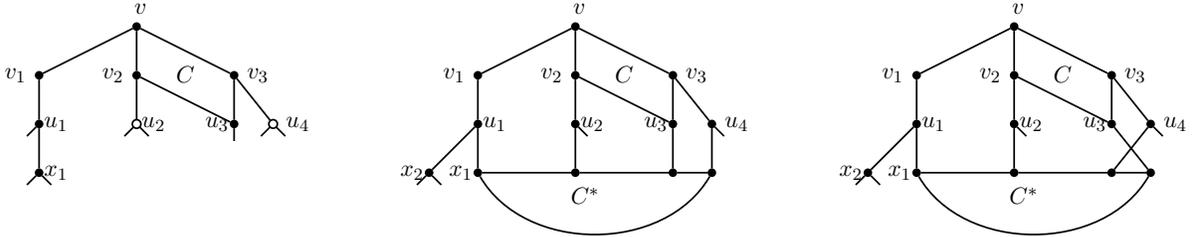}\vspace{-0.7cm}
  \caption{Subgraphs mentioned in Lemma~\ref{Lem:$4$-cycle-(3,3,3)}}
\label{fig:clm:$4$-cycle_(3,3,3)-vertex}
\end{figure}

{\noindent (Case 1)} Suppose that $v$ is in  the case $(Q_{3,0,1})$.
Then $\beta_2+\ell\le 1$.
 Let $w$ be the isolated vertex in $G-X$. 
By Lemma~\ref{Lem:no_301},
there is a vertex  $u\in B\setminus N(w)$  such that $N(u)\subset N(v)\cup N(w)$ and $|N(u)\cap N(v)|\ge 2$. 
Thus $u=u_3$.
Then $u_3 \notin N(w)$ and $u_3$ is adjacent to one of $u_1$, $u_4$, and $u_2$.
Suppose that $w$ is a $2$-vertex, then $wu_1\notin E(G)$ by Lemma~\ref{Lem:key3.1}(iii), which implies that $N(w)=\{u_2,u_4\}$. Since $u_2u_3\in E(G)$ or $u_3u_4\in E(G)$, it contradicts Lemma~\ref{Lem:key3.1}(ii).
Thus $w$ is a $3$-vertex. 
Then $wu_1 \in E(G)$ and so we may assume $w=x_1$.
Thus $x_1u_2 \in E(G)$ and $x_1u_4\in E(G)$.
If $u_3u_1\in E(G)$, then $N_2[u_3]=\mathcal{C}(X)$, which contradicts the case $(Q_{3,0,1})$.
Then $u_3u_1\notin E(G)$ and either $u_3u_2\in E(G)$ or $u_3u_4\in E(G)$.
Without loss of generality, we may assume $u_3u_2\in E(G)$.
Then $\ell=1$, $\beta_2=0$ and so $u_1u_4\notin E(G)$ and $d(u_4)=3$.
We let $Y_1=N_2[v_2]$.
Then we can check $\omega_G(Y_1)=22$, $\partial(Y_1)=4$, $d_{G-Y_1}(u_1)=d_{G-Y_2}(u_4)=1$ and so $a(Y_1)=i(Y_1)=0$, which contradicts Fact~\ref{Fact:main_inequality}.

{\noindent (Case 2)}  Suppose that $v$ is in the  case $(Q_{4,1,0})$. Then $\beta_2=\ell=0$ and so $d(u_2)=d(u_4)=3$.
 Let $C^*$ be the $C_4$-component in $G-X$. By Lemma~\ref{lem:no-common-neighbors of degree$2$}, $|\partial(C^*)|=4$.
By Lemma~\ref{Lem:two-vertex-removal_C_4}, there are at least three vertices that are adjacent to $C^*$.
By symmetry of $u_2$ and $u_4$, we may assume that
the number of vertices on $C^*$ adjacent to $u_4$ is greater than or equal to the number of those adjacent to $u_2$. Thus $u_4$ is adjacent to $C^*$.

First, we will show that $u_1$ is adjacent to exactly one vertex of $C^*$. Suppose to the contrary that that $u_1$ is not adjacent to $C^*$.
Then $u_2$ and $u_3$ are adjacent to exactly one vertex on $C^*$.
We note $u_3 \in V^*$ since $u_3$ is not on any triangle with $\beta(u_3)=5$. 
Let $X'=N_2[u_3]$ and $r'=\gamma_{b,2}(G[X'])$.
By the fact $\delta(G)\ge 2$, $G-X'$ has exactly one isolated vertex and $\ell(u_3)\ge 2$, which contradicts Lemma~\ref{Lem:possible:cases}.
Thus $u_1$ is adjacent to $C^*$. Without loss of generality, we may assume $x_1\in V(C^*)$. Let $x_2$ be the neighbor of $u_1$ other than $v_1$ and $x_1$.
If $x_2\in V(C^*)$, then by Lemma~\ref{Lem:key3.1}(ii), $u_1$ is not on a triangle and $x_1$ and $x_2$ are not adjacent and we check $u_1\in V^{**}$   and $\beta(u_1)=3$, which contradicts Lemma~\ref{Lem:no_intersection_induced cycle_length4}.
Thus $x_2\notin V(C^*)$, and so $u_1$ is adjacent to exactly one vertex of $C^*$.

We will show that $C^*$ is adjacent each of $u_1$, $u_2$, $u_3$, and $u_4$.
Suppose not.
Then by assumption,  $u_4$ is adjacent to two vertices of $C^*$.
By the fact $\delta(G)\ge 2$ and Lemma~\ref{Lem:removal-two-edges},
 $a(N_2[u_4])=0$  and $i(N_2[u_4])=0$.
  We can check $w_{G}( N_2[u_4])=24$ and $\partial(N_2[u_4])\le5$, which contradicts Fact~\ref{Fact:main_inequality}. Thus, $C^*$ is adjacent to each vertex of $B$.

Without loss of generality,
we may assume
the distance between $u_2$ and $x_1$ is two.
See the second and third figures of Figure~\ref{fig:clm:$4$-cycle_(3,3,3)-vertex}. 
Let $x_4$ be the neighbor of $u_4$ not in $V(C)\cup V(C^*)$.
Then $x_4\in B(v_3)$.
We can check $v_1\in B(v_3)$, $a(N_2[v_3]\setminus \{x_4\})=i(N_2[v_3]\setminus\{x_4\})=0$, and $u_4$ is the only neighbor of $x_4$ in $N_2[v_3]$, which contradicts Lemma~\ref{Lem:every}(ii).
Hence, we have shown $\beta(v)=5$. 
\end{proof}

\begin{Cor}\label{cor:4cycle:333}
Every vertex on a $4$-cycle is a $(3,3,3)$-vertex.
\end{Cor}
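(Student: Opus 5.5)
Let $C$ be a $4$-cycle and $x$ a vertex on it; the task is to show $x$ is a $(3,3,3)$-vertex. By Lemma~\ref{Lem:b(G)=0}, $C$ is an induced $4$-cycle, so the only chords are excluded and every vertex of $C$ has its two $C$-neighbors nonadjacent.

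First show that no vertex of $C$ is a $2$-vertex. If some vertex of $C$ had degree $2$, then $C$ would be a $4$-cycle containing a $2$-vertex. This contradicts the `moreover' part of Lemma~\ref{lem:no-common-neighbors of degree$2$}, provided we know $\beta(v)=4$ for the relevant $v\in V^{**}$. That lemma is stated under the hypothesis $\beta(v)=4$, so instead I argue directly. Let $y$ be the $2$-vertex on $C$, and let $v$ be a neighbor of $y$ on $C$; by Lemma~\ref{Lem:key3.1}(iii), $v$ is a $3$-vertex, so $v\in V^{**}$. The two neighbors of $v$ on $C$, namely $y$ and the other one $v_2$, have the common neighbor $u$ opposite to $v$ on $C$. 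Counting $B(v)$: $y$ contributes at most one vertex ($u$), and $v_2$ contributes at most two vertices, one of which is $u$ again. Together with the third neighbor $v_3$, which contributes at most two, we get $\beta(v)\le 1+1+2=4$. This contradicts Lemma~\ref{Lem:$4$-cycle-(3,3,3)}, which gives $\beta(v)=5$. Hence every vertex of $C$ is a $3$-vertex.

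Next show that each vertex $x$ of $C$ has no $2$-neighbor. Suppose $x$ has a $2$-neighbor $z$, necessarily off $C$. Then $x\in V^{**}$. The two $C$-neighbors $a,b$ of $x$ share the common neighbor $x'$ (opposite $x$ on $C$), so in $B(x)$ the vertices coming from $a$ and $b$ number at most $1+1+1=3$ (the vertex $x'$ and one further neighbor each). The $2$-neighbor $z$ contributes at most one more vertex. Thus $\beta(x)\le 4$, again contradicting Lemma~\ref{Lem:$4$-cycle-(3,3,3)}.

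The main care point is the count $\beta\le 4$ in both cases: it relies only on the shared vertex opposite on $C$ and on $x$ not lying on a triangle, which holds by Lemma~\ref{Lem:key3.1}(ii) since a vertex with a $2$-neighbor cannot lie on a triangle. With both steps done, every vertex of $C$ is a $3$-vertex all of whose neighbors are $3$-vertices, that is, a $(3,3,3)$-vertex.
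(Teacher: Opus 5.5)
Your proof is correct and is essentially the paper's argument: a $4$-cycle through a vertex of $V^{**}$ forces $\beta\le 4$, which contradicts $\beta(v)=5$ from Lemma~\ref{Lem:$4$-cycle-(3,3,3)}. The only difference is in how $2$-vertices on $C$ are excluded: the paper cites the \emph{moreover} part of Lemma~\ref{lem:no-common-neighbors of degree$2$}, while you apply Lemma~\ref{Lem:$4$-cycle-(3,3,3)} directly to the $3$-neighbor of that $2$-vertex, which is equally valid since that lemma is already established.
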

\begin{proof}
For a $4$-cycle $C$ in $G$, if a vertex $v$ on $C$ is not a $(3,3,3)$-vertex, then $v$ is a $(2,3,3)$-vertex by Lemma~\ref{Lem:key3.1}(iii) and Lemma~\ref{lem:no-common-neighbors of degree$2$} and so $\beta(v)\leq 4$, which is a contradiction to Lemma~\ref{Lem:$4$-cycle-(3,3,3)}.
\end{proof}

\subsection{Proof of Theorem~\ref{Thm:Main_subcubic}}
\label{subsect:3.4}
For a vertex $v \in V^{**}$, let $N(v)=\{v_1,v_2,v_3\}$, and assume that $v_1$ is a $2$-vertex. By Lemma~\ref{Lem:$4$-cycle-(3,3,3)}, 
 $|N_2[v]|=9$ and so the vertices in Figure~\ref{fig:nine-vertices} are distinct.  

\begin{figure}[h!]\centering
\includegraphics[page=13, width=5.2cm]{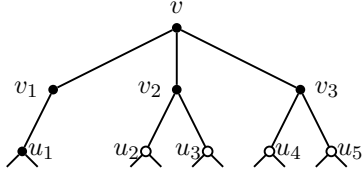}
\caption{A subgraph mentioned in the proof of Lemma~\ref{Lem:last} and Theorem~\ref{Thm:Main_subcubic}}
\label{fig:nine-vertices} 
\end{figure}

\begin{Lem}\label{Lem:last}
For a vertex $v \in V^{**}$,
let $X=N_2[v]$ and $u_1$ be the neighbor of the $2$-neighbor of $v$ other than $v$.
Then $v$ is in the case $(Q_{4,1,0})$ in Lemma~\ref{Lem:possible:cases} and $u_1$ is adjacent to exactly one vertex of the $C_4$-component of $G-X$.
\end{Lem}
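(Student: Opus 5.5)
By Lemma~\ref{Lem:$4$-cycle-(3,3,3)}, $\beta(v)=5$, and $v\in V^{**}\subset V^*$. So Lemma~\ref{Lem:possible:cases} places $v$ in one of $(Q_{3,0,1})$, $(Q_{4,0,2})$, $(Q_{4,1,0})$. We use the labeling of Figure~\ref{fig:nine-vertices}: $N(v)=\{v_1,v_2,v_3\}$ with $d(v_1)=2$, $v_1u_1\in E(G)$, and $B=\{u_1,\dots,u_5\}$ with $u_2,u_3\in N(v_2)$ and $u_4,u_5\in N(v_3)$. By Lemma~\ref{lem:no-common-neighbors of degree$2$} and Corollary~\ref{cor:4cycle:333}, no vertex of $N[v]$ lies on a $4$-cycle and all vertices of $B$ are distinct. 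The plan is to rule out $(Q_{3,0,1})$ and $(Q_{4,0,2})$, and then, in $(Q_{4,1,0})$, to pin down how $u_1$ meets the $C_4$-component.

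\emph{Case $(Q_{3,0,1})$.} Let $w$ be the isolated vertex of $G-X$. Lemma~\ref{Lem:no_301} gives $u\in B\setminus N(w)$ with $|N(u)\cap N(v)|\ge 2$. Since $\beta=5$, each vertex of $B$ has exactly one neighbor in $N(v)$, so no such $u$ exists. This is an immediate contradiction.

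\emph{Case $(Q_{4,0,2})$.} Here $i=i_2=2$ and $\beta_2\le1$. Let $w_1,w_2$ be the two isolated $2$-vertices of $G-X$. Their neighborhoods are disjoint pairs inside $B$ avoiding $u_1$, since a $2$-vertex of $I(X)$ cannot be adjacent to $u_1$ without making a vertex of degree sum violation with Lemma~\ref{Lem:key3.1}(iii); the real reason is that otherwise $N(w_1)\cup N(w_2)\supseteq B$ gives $r\le3$. So $N(w_1)\cup N(w_2)=\{u_2,\dots,u_5\}$. If some $w_j$ had both neighbors in $N(v_2)$ (or both in $N(v_3)$), then $w_j$ would lie on a $4$-cycle with $v_2$, contradicting Corollary~\ref{cor:4cycle:333} since $w_j$ is a $2$-vertex. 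Hence we may take $N(w_1)=\{u_2,u_4\}$ and $N(w_2)=\{u_3,u_5\}$. I would then obtain a cheap broadcast on $\mathcal{C}(X)$ of cost $3$, for example $2$ on $v_1$ (reaching $u_1$, $v_2$, $v_3$ and $v$) together with $1$ on each of $w_1,w_2$. This contradicts $r=4$. If that exact assignment fails to cover everything, the fallback is to apply Lemma~\ref{Lem:two-sets} to $Z=N[v]\cup N[w_1]$ with $s\ge2$ and $t\ge2$, which forces $i(Z)+a(Z)>0$, and then finish with Lemma~\ref{Lem:key3.1}(iii) and $\delta(G)=2$.

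\emph{Case $(Q_{4,1,0})$.} Now $\beta_2=\ell=0$. Let $C^*$ be the $C_4$-component of $G-X$. By Lemma~\ref{Lem:two-vertex-removal_C_4}, at least three vertices of $B$ are adjacent to $C^*$.

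First I show $u_1$ is adjacent to $C^*$. If not, the neighbors of $u_1$ other than $v_1$ lie outside $\mathcal{C}(X)$. Consider $v_2$ (or $v_3$): it is in $V^t$, $v_1\in B(v_2)$, and $u_1$ has the single neighbor $v_1$ in $N_2[v_2]$ or lies just outside it. Removing the appropriate vertex and applying Lemma~\ref{Lem:every}(ii) should give the contradiction, using $\delta(G)\ge2$ and Lemma~\ref{Lem:removal-two-edges} to exclude isolated vertices and $C_4$-components. This mirrors Case~2 of Lemma~\ref{Lem:$4$-cycle-(3,3,3)}.

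Next, if $u_1$ had two neighbors on $C^*$, they are nonadjacent by Lemma~\ref{Lem:key3.1}(ii). Then $u_1$ would be a $3$-vertex in $V^{**}$ (it has the $2$-neighbor $v_1$) whose two $C^*$-neighbors share common neighbors on $C^*$, so $\beta(u_1)\le4$. This contradicts Lemma~\ref{Lem:$4$-cycle-(3,3,3)}.

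The main obstacle is the step showing $u_1$ is adjacent to $C^*$. It requires careful case analysis of which three of $u_2,\dots,u_5$ meet $C^*$, and of whether $|\partial(C^*)|$ equals $3$ or $4$, in the style of Lemma~\ref{Lem:no_inter_cycle_length4 and 5 6}.
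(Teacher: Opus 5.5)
Your handling of $(Q_{3,0,1})$, and your argument that $u_1$ has at most one neighbor on $C^*$, agree with the paper. However, there are two genuine gaps.

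\textbf{Case $(Q_{4,0,2})$.} Your broadcast ($2$ on $v_1$, $1$ on each of $w_1,w_2$) does dominate $G[\mathcal{C}(X)]$, but its cost is $2+1+1=4=r$. So it contradicts nothing.

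Your fallback is the paper's route, but it is not carried out. With $Z=N[v]\cup N[w_1]$ you actually have $s=2$ and $t=1$ (only $v_1$ is a $2$-vertex among $v_1,v_2,v_3,u_2,u_4$). This still gives $3i(Z)+2a(Z)>0$ by Lemma~\ref{Lem:two-sets}. However, Lemma~\ref{Lem:key3.1}(iii) and $\delta(G)=2$ do not finish the argument.

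The missing observation is that here every vertex of $B$ has a $2$-neighbor ($v_1$, $w_1$ or $w_2$). Hence, by Corollary~\ref{cor:4cycle:333}, no vertex of $B$ lies on a $4$-cycle. Since $a(X)=0$, any $C_4$-component of $G-Z$ would have to meet $X\setminus Z\subset B$, so $a(Z)=0$. An isolated vertex of $G-Z$ can then only be $u_1$:
\begin{itemize}
\item a candidate outside $X$ would be isolated in $G-X$, hence equal to $w_2$, which has neighbors outside $Z$;
\item $u_3$ and $u_5$ are adjacent to $w_2\notin Z$.
\end{itemize}
This forces $u_1u_2,u_1u_4\in E(G)$, and then the $4$-cycle $u_1u_2w_1u_4u_1$ contradicts Corollary~\ref{cor:4cycle:333}.

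\textbf{Case $(Q_{4,1,0})$.} The lower bound, that $u_1$ has a neighbor on $C^*$, is not proved. You do not say which vertex to delete from $N_2[v_2]$ in Lemma~\ref{Lem:every}(ii), nor why the resulting $Y$ would satisfy $a(Y)=i(Y)=0$. Since $\ell=0$, $u_1$ is at distance $3$ from $v_2$, so $u_1\notin B(v_2)$.

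The obvious admissible choice is $v_3$, whose only neighbor in $N_2[v_2]$ is $v$. But if both other neighbors of $u_1$ lie in $N(u_2)\cup N(u_3)$, then $u_1$ is isolated in $G-(N_2[v_2]\setminus\{v_3\})$, and the lemma yields nothing unless further cases are excluded.

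The paper avoids this with a direct weight count. Suppose $u_1$ misses $C^*$, and let $Z=\mathcal{C}(X)\setminus\{v_1,u_1\}$.
\begin{itemize}
\item All $11$ vertices of $Z$ are $3$-vertices, since $\beta_2=0$ and no $4$-cycle contains a $2$-vertex. So $\omega_G(Z)=33$.
\item The four edges of $\partial(C^*)$ all land in $\{u_2,\dots,u_5\}$, so $\partial(Z)$ consists of $vv_1$ and four edges at $u_2,\dots,u_5$. Thus $|\partial(Z)|\le 5$.
\item In every possible attachment of $C^*$ one checks $\gamma_{b,2}(G[Z])\le 3$.
\end{itemize}
Fact~\ref{Fact:main_inequality} then gives $3i(Z)+2a(Z)>1$. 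The resulting isolated vertex or $C_4$-component of $G-Z$ would already be an isolated vertex or a second $C_4$-component of $G-X$. Pieces through $u_1$ or $v_1$ are excluded, because $v_1$ is a $2$-vertex on no $4$-cycle. This contradicts $i(X)=0$ and $a(X)=1$.
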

\begin{proof} 
Let $N(v)=\{v_1,v_2,v_3\}$. 
We follow the vertex labeling in Figure~\ref{fig:nine-vertices}. 

Suppose that $v$ is not in the case $(Q_{4,1,0})$.
If $v$ is in the case $(Q_{3,0,1})$,
then by Lemma~\ref{Lem:no_301}, there exists a vertex $u$ such that $|N(u)\cap N(v)|\ge 2$, which contradicts the fact $\beta=5$.
Thus $v$ is in the case $(Q_{4,0,2})$ by Lemma~\ref{Lem:possible:cases}.

Let $w$ and $w'$ be the isolated vertices in $G-X$.
We note that $w$ and $w'$ are $2$-vertices
by Lemma~\ref{Lem:possible:cases}.
Then by Corollary~\ref{cor:4cycle:333} and Lemma~\ref{Lem:key3.1}(iii),
$d(u_i)=3$ for each $i\in \{2,3,4,5\}$ and 
$G[\mathcal{C}(X)]$ has a spanning subgraph isomorphic to the one in Figure~\ref{fig:nine-vertices_2}. Thus every vertex in $B$ is not a $(3,3,3)$-vertex and so by Corollary~\ref{cor:4cycle:333}, there is no $4$-cycle containing a vertex in $B$.

\begin{figure}[h!]\centering
\includegraphics[page=14,width=5cm]{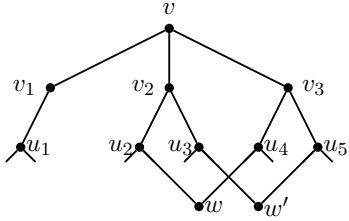}
 \caption{A subgraph mentioned in the proof of Lemma~\ref{Lem:last}}
\label{fig:nine-vertices_2}
\end{figure}

Since $v_1\in N[v]$, $Z=N[v]\cup N[w]$ contains at least one $2$-vertex other than $v$ and $w$.
In addition, by Lemma~\ref{Lem:$4$-cycle-(3,3,3)} and the fact $\delta(G)\ge 2$, $|E(N[v],N[w])|\ge 2$ and so by Lemma~\ref{Lem:two-sets},
$a(Z)>0$ or $i(Z)>0$.
Since there is no $4$-cycle containing a vertex of $B$, $a(Z)=0$. Thus, $i(Z)>0$.
Let $w^*$ be an isolated vertex of $G-Z$. Then by the fact $I(X)=\{w,w'\}$, $w^*=u_1$.
Since $u_1$ is a $3$-vertex in $G$,
$u_1$ is adjacent to $u_2$ and $u_4$.
Thus $u_1u_2wu_4u_1$ is a $4$-cycle and $u_1$ is not a $(3,3,3)$-vertex, which contradicts Corollary~\ref{cor:4cycle:333}.
Hence $v$ is in the case $(Q_{4,1,0})$. 

\begin{figure}[h!]\centering
\includegraphics[page=15, width=12cm]{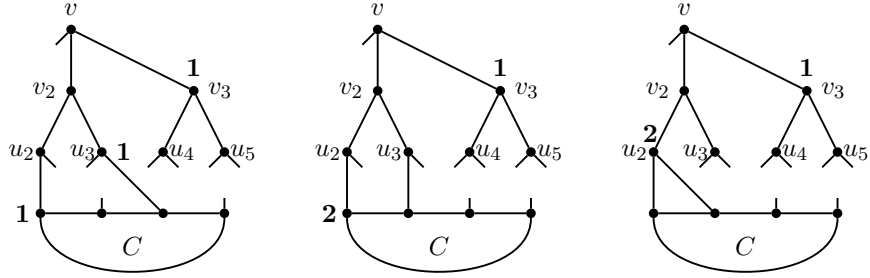}\vspace{-0.5cm}
  \caption{Subgraphs mentioned in the proof of Lemma~\ref{Lem:last} }
\label{fig:nine-vertices_final}
\end{figure}

To complete the proof, we let $C$ be the $C_4$-component in $G-X$. Note that  $|\partial(C)|=4$ 
by Lemma~\ref{lem:no-common-neighbors of degree$2$}. 
In addition, $u_1$ is adjacent to at most one vertex on $C$ by Lemmas~\ref{Lem:key3.1}(ii) and~\ref{Lem:$4$-cycle-(3,3,3)}. It remains to show that  $u_1$ is adjacent to $C$. Suppose that $u_1$ is not adjacent to $C$. Let $Z=\mathcal{C}(X)\setminus\{v_1,u_1\}$. 
Then $\omega_G(Z)\ge 33$ and, by Lemma~\ref{Lem:$4$-cycle-(3,3,3)}, $|\partial(Z)|\le 5$ and 
$G[Z]$ has a spanning subgraph isomorphic to one graph in Figure~\ref{fig:nine-vertices_final}.
Therefore $\gamma_{b,2}(G[Z])\le 3$.
By Fact~\ref{Fact:main_inequality}, $G-Z$ has an isolated vertex or a $C_4$-component.
Then an isolated vertex or a $C_4$-component is also in $\mathcal{C}(X)$, a contradiction.
\end{proof}

 Now we are ready to complete the proof of Theorem~\ref{Thm:Main_subcubic}.
\begin{proof}[Proof of Theorem~\ref{Thm:Main_subcubic}]
By Lemmas~\ref{Lem:existence_deg_1_2} and~\ref{Lem:key3.1}(iii),
$G$ has a $3$-vertex $v$ that has a $2$-neighbor $v_1$. 
Then $v$ is not on any triangle by Lemma~\ref{Lem:key3.1}(ii) and so $v\in V^{**}$.
We follow the vertex labeling in Figure~\ref{fig:nine-vertices}. 
By Lemma~\ref{Lem:last}, $v$ is in the case $(Q_{4,1,0})$ and $G-X$ has exactly one $C_4$-component $C$ that is adjacent to $u_1$ and $u_1$ is adjacent to one vertex on $C$. Since $v$ is in the case $(Q_{4,1,0})$, each vertex in $B$ is a $3$-vertex and $E(G[B])=\emptyset$ by Lemma~\ref{Lem:possible:cases}.

We also let $X'=N_2[u_1]$.
We note $u_1 \in V^{**}$.
By Lemma~\ref{Lem:last} again, $G-X'$  has exactly one $C_4$-component $C'$ that is adjacent to $v$ and $v$ is adjacent to one vertex on $C'$. We may assume 
$V(C')=\{u_2,v_2,u_3,w\}$ for some vertex $w$.
Clearly, $V(C)\cap V(C')=\emptyset$.
See the first figure of Figure~\ref{fig:nine-vertices_final3}.

\begin{figure}[h!]\centering
\includegraphics[page=16,width=16cm]{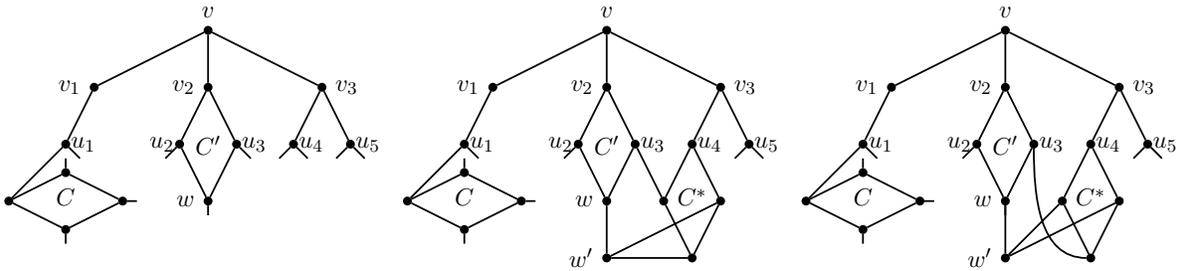}
\caption{Subgraphs mentioned in the proof of Theorem~\ref{Thm:Main_subcubic}}
\label{fig:nine-vertices_final3}
\end{figure}

By Corollary~\ref{cor:4cycle:333}, $d(w)=d(w')=3$ where $w'$ is the neighbor of $w$ other than $u_2$ and $u_3$.
Since $G-X$ has no isolated vertex, $w'\notin \{u_1,u_4,u_5\}$.
Let $Z= N[v] \cup N[w]$.
By Lemma~\ref{Lem:two-sets},
$a(Z)>0$ or $i(Z)>0$. 
We can check that $u_1$, $u_4$, and $u_5$ are not isolated vertices in $G-Z$.
If $i(Z)>0$, then there exists a vertex $w''$ such that $w''\neq w$ and $N(w'')=\{u_2,u_3,w'\}$ by Corollary~\ref{cor:4cycle:333} and so $C'$ cannot be the $C_4$-component in $G-X'$, which is impossible.
Hence $i(Z)=0$. Thus $a(Z)>0$, that is, $G-Z$ contains a $C_4$-component $C^*$. Note that $u_1\not\in V(C^*)$.  Thus the set of vertices adjacent to $C^*$ are contained in $\{u_2,u_3,v_3,w'\}$. 

If $V(C^*)\cap \{u_4,u_5\}=\emptyset$, then by Corollary~\ref{cor:4cycle:333},
$w'$ is adjacent to two vertices on $C^*$ and both $u_2$ and $u_3$ is adjacent to $C^*$ and so $\scalebox{1.4}{$\omega$}(G[ N[w] \cup V(C^*)])=26\equiv 8 \pmod{9}$ and $|\partial( N[w] \cup V(C^*))|=2$, which contradicts Lemma~\ref{Lem:cutedge4}.
Then $V(C^*)\cap \{u_4,u_5\}\neq \emptyset$.
Since $|E[V(C),\{u_2,u_3,u_4,u_5\} ]|=3$ by Lemma~\ref{Lem:last},  it holds that $|V(C^*)\cap \{u_4,u_5\}|=1$, 
$w'$ is adjacent to two vertices of $C^*$,
and exactly one of $u_2$ and $u_3$ is adjacent to $C^*$.
Without loss of generality, we may assume $u_4 \in V(C^*)$ and  $u_3$ is adjacent to $C^*$.
See   the two last figures of Figure~\ref{fig:nine-vertices_final3}.
We let $Z^*= N[w]\cup V(C^*)$.
Then one can check $\omega_G(Z^*)=24$, $|\partial(Z^*)|=4$, $\gamma_{b,2}(Z^*)\leq 2$ by assigning $2$ to $w'$, $a(Z^*)=i(Z^*)=0$, which contradicts Fact~\ref{Fact:main_inequality}.
\end{proof}

\bibliographystyle{plain}

\end{document}